\documentclass[12pt]{amsart}
\usepackage{epsfig,comment}
\usepackage{mathtools}
\usepackage[subnum]{cases}
\usepackage{enumerate}
\usepackage{bbm,bm}
\usepackage{amsmath,amssymb,mathrsfs,amsthm}
\usepackage{color}
\textwidth6.6truein \oddsidemargin-.5truecm \evensidemargin-.5truecm
\usepackage[bookmarks=true,
bookmarksnumbered=true, breaklinks=true,
pdfstartview=FitH, hyperfigures=false,
plainpages=false, naturalnames=true,
colorlinks=true,pagebackref=true,
pdfpagelabels,colorlinks=true,
	linkcolor=blue,
	citecolor=black,
	filecolor=black,
	urlcolor=black]{hyperref}
\usepackage{wasysym}
\usepackage[hang]{footmisc} 
\usepackage{verbatim}

\usepackage{pgfplots}
\pgfplotsset{width=10cm,compat=1.9}
\usepgfplotslibrary{fillbetween}

\usepgfplotslibrary{external}
\tikzexternalize

\newcommand{\N}{\mathbb{N}}
\newcommand{\R}{\mathbb{R}}
\newcommand{\Sp}{\mathbb{S}^{n-1}}

\DeclareMathOperator{\vol}{vol}
\DeclareMathOperator{\hyp}{hyp}
\DeclareMathOperator{\epi}{epi}
\DeclareMathOperator{\Gr}{Gr}
\DeclareMathOperator{\LC}{LC}
\DeclareMathOperator{\conv}{conv}

\DeclareMathOperator{\proj}{proj}
\DeclareMathOperator{\dom}{dom}
\DeclareMathOperator{\lev}{lev}
\DeclareMathOperator{\supp}{supp}
\DeclareMathOperator{\base}{base}
\DeclareMathOperator*{\hypolim}{hypo-lim}
\DeclareMathOperator*{\epilim}{epi-lim}

\usepackage{scalerel}

\newtheorem {theorem}{Theorem}[section]
\newtheorem {proposition}[theorem]{Proposition}
\newtheorem {lemma}[theorem]{Lemma}
\newtheorem {corollary}[theorem]{Corollary}

\theoremstyle{definition}
\newtheorem {definition}{Definition}[section]
\newtheorem {remark}[theorem]{Remark}

\newcommand*\sq{\mathbin{\vcenter{\hbox{\rule{.4ex}{.4ex}}}}}

\DeclareFontFamily{U} {MnSymbolC}{}

\DeclareFontShape{U}{MnSymbolC}{m}{n}{
  <-6> MnSymbolC5
  <6-7> MnSymbolC6
  <7-8> MnSymbolC7
  <8-9> MnSymbolC8
  <9-10> MnSymbolC9
  <10-12> MnSymbolC10
  <12-> MnSymbolC12}{}
\DeclareFontShape{U}{MnSymbolC}{b}{n}{
  <-6> MnSymbolC-Bold5
  <6-7> MnSymbolC-Bold6
  <7-8> MnSymbolC-Bold7
  <8-9> MnSymbolC-Bold8
  <9-10> MnSymbolC-Bold9
  <10-12> MnSymbolC-Bold10
  <12-> MnSymbolC-Bold12}{}

\DeclareSymbolFont{MnSyC} {U} {MnSymbolC}{m}{n}

\DeclareMathSymbol{\bigsquare}{\mathbin}{MnSyC}{107}
\DeclareMathSymbol{\bigstar}{\mathbin}{MnSyC}{205}

\usepackage{fonttable}


\title[On Minkowski symmetrizations of $\alpha$-concave functions]{On Minkowski  symmetrizations of $\alpha$-concave functions and related applications}

\author{Steven Hoehner}
\date{\today}

\begin{document}


\setcounter{footnote}{0}

\begin{abstract}\noindent
The Minkowski symmetral of an $\alpha$-concave function is studied, and some of its fundamental properties are derived. It is shown that for a given $\alpha$-concave function, there exists a sequence of Minkowski symmetrizations that hypo-converges to its  ``reflectional hypo-symmetrization''. As an application, it is shown that the reflectional hypo-symmetrization of a log-concave function $f$ is always harder  to approximate than $f$ is by ``inner log-linearizations'' with a fixed number of break points. This is a functional analogue of the classical geometric result which states that among all convex bodies of a given mean width, a Euclidean ball is hardest to approximate by inscribed polytopes with a fixed number of vertices. Finally, a general extremal property of the reflectional hypo-symmetrization is deduced, which includes a Urysohn-type inequality and the aforementioned approximation result as special cases.
\end{abstract}

\maketitle

\renewcommand{\thefootnote}{}
\footnotetext{2020 \emph{Mathematics Subject Classification}: 52A41  (39B62, 52A40)}

\footnotetext{\emph{Key words and phrases}: Asplund sum, infimal convolution, inner linearization, log-concave, mean width, Minkowski symmetrization, quasiconcave, Urysohn inequality}
\renewcommand{\thefootnote}{\arabic{footnote}}
\setcounter{footnote}{0}

\tableofcontents

\section{Introduction}

Symmetrizations are among the most powerful tools in  geometry and its applications. They are used to ``round off'' a convex body into a ball after multiple applications while preserving some geometric quantity, such as volume, surface area or mean width. For example, the Steiner symmetrization, which is chief among the various symmetrizations in convex geometry, is defined as follows. Given a convex body $K$ in $\R^n$ and a direction $u$ in the unit sphere $\Sp$ in $\R^n$, the \emph{Steiner symmetral} $S_u K$ of $K$ about the hyperplane $u^\perp$ is the convex body obtained by translating, in the direction $u$, each chord of $K$ parallel to $u$  so that it is bisected by $u^\perp$, and then taking the union of all of the translated chords. Note that by Cavalieri's principle, the Steiner symmetral $S_u K$ has the same  volume as $K$. Moreover, for any convex body $K$ in $\R^n$, there always exists a sequence of successive Steiner symmetrizations which transforms $K$ into a Euclidean ball with the same volume as $K$. This ball  provides the solution to some of the most important extremal problems in classical convex geometry, including the isoperimetric inequality, the Brunn--Minkowski inequality and the Blaschke--Santal\'o inequality. 

Another important symmetrization in convex geometry is the Minkowski symmetrization.  Given a direction $u\in\Sp$, the \emph{Minkowski symmetral} $\tau_u K$ of a convex body $K$ in $\R^n$ about the hyperplane $H=u^\perp$ is defined by 
\begin{equation}\label{minkowski-symmetral-convex-bodies}
\tau_u K=\frac{1}{2}K+\frac{1}{2}R_u K. 
\end{equation}
Here $K+L=\{x+y: x\in K, y\in L\}$ is the Minkowski sum of convex bodies $K$ and $L$, and for $\lambda>0$, the $\lambda$-homothety of $K$ is $\lambda K=\{\lambda x: x\in K\}$. The reflection $R_u K$ of $K$ about $H$ is $R_u K=\{R_u(x): x\in K\}$, where $R_u(x)=x-2\langle x,u\rangle u$ is the reflection of $x\in\R^n$ about $H$. 

The Minkowski symmetrization  satisfies a number of useful geometric properties. It preserves mean width, is monotonic with respect to set inclusion, symmetric with respect to the hyperplane $H$, and invariant on sets that are symmetric with respect to $H$. Furthermore, there exists a sequence of Minkowski symmetrizations of $K$ which converges in the Hausdorff metric to a Euclidean ball with the same mean width as $K$. Naturally, this ball delivers the solution to a number of extremal problems involving the mean width, including Urysohn's inequality.   For more background on symmetrizations in convex geometry, we refer the reader to, for example, \cite{symm-in-geom,GruberBook,SchneiderBook}.

Let us now turn our attention to the topic of this paper, functional symmetrizations. Let $f:\R^n\to[0,\infty)$ be a measurable function with $\vol_n(\lev_{\geq t}f)<\infty$ for every $t>0$, where $\lev_{\geq t}f=\{x\in\R^n: \, f(x)\geq t\}$ and $\vol_n(\cdot)$ is the $n$-dimensional volume (Lebesgue measure). Given $u\in\Sp$, the \emph{Steiner symmetral} $S_u f$ of $f$ about $H=u^\perp$ is defined by
\begin{equation}\label{steiner-symm-def-fns}
S_u f(x) = \int_0^\infty \mathbbm{1}_{S_u(\lev_{\geq t}f)}(x)\,dt
=\sup\{t>0:\, x\in S_u(\lev_{\geq t}f)\},
\end{equation}
where $\mathbbm{1}_A$ is the characteristic function of the set $A\subset\R^n$. It turns out that there is a sequence of Steiner symmetrizations in different directions which ``rounds off'' the function  after multiple applications (in the sense of rotation invariance about the dependent variable axis). The limiting function is  the symmetric decreasing volume rearrangement $f_{\vol}^*$, which may be defined by the relation $\lev_{\geq t}f_{\vol}^*=(\lev_{\geq t}f)_{\vol}^*$, where $(\lev_{\geq t}f)_{\vol}^*$ is the Euclidean ball centered at the origin with the same volume as $\lev_{\geq t}f$. This rearrangement delivers the solution to a number of extremal problems in the calculus of variations and PDEs. Some examples of functional Steiner symmetrizations and their applications can be found in the works \cite{AKM2004,Burchard2009,CGN-2018,kawohl,Lin-JFA-2017,Lin-2017,Lin-Leng,Volcic}, for example.

A natural question that arises is to consider Minkowski symmetrizations of functions. In view of the Steiner symmetral \eqref{steiner-symm-def-fns}, one can define a Minkowski symmetral of a function in a similar way. Let $f:\R^n\to[0,\infty)$ be a measurable function with $w(\lev_{\geq t}f)<\infty$ for every $t>0$, where $w(\cdot)$ is the classical mean width functional (see Definition \ref{alpha-mw-def} below). Given $u\in\Sp$, the \emph{Minkowski symmetral} $\widetilde{\tau}_u f$ of $f$ about $H=u^\perp$ may be defined as
\begin{equation}\label{minkowski-symm-def-fns}
\widetilde{\tau}_u f(x) := \int_0^\infty \mathbbm{1}_{\tau_u(\lev_{\geq t}f)}(x)\,dt.
\end{equation}

In this article, we study a reflection-based Minkowski symmetral for $\alpha$-concave functions, of which  \eqref{minkowski-symm-def-fns} is shown to be a special limiting case. This extends the classical definition of the Minkowski symmetral of a convex body to the functional setting, while retaining the analogous fundamental properties of its geometric counterpart. This construction is related to earlier mean width rearrangements in the PDE literature, notably the mean width rearrangement introduced by Salani \cite{Salani-2015} via $p$-convolutions of rotated copies of a solution and a limiting rotation-mean procedure (see also \cite{Tso-1981}). We focus instead on a reflection-based version called the Minkowski symmetral, and  investigate  its structural properties from the point of view of support functions, mean width, and hypo-convergence. This work represents another contribution to the (now vast) ``geometrization of probability'' program, a term coined by V. Milman, which has been  studied extensively during the past couple of decades in convex geometry and functional analysis. For some examples of the works most closely related to this paper, we refer the reader to  \cite{AKM2004,BCF-2014,CLM-Hadwiger1,CLM-Hadwiger2,CGN-2018,Hoehner-Chasioti,Hoehner-Mussnig,Hofstatter-Schuster,Milman-Rotem-alpha,Milman-Rotem,Rotem2012,Rotem2013,Roysdon-Xing-Lp-functions}.

Let us briefly explain the main definition of this paper, which is the Minkowski symmetral of an $\alpha$-concave function. First, recall that for fixed $\alpha\in[-\infty,+\infty]$, a function $f:\R^n\to[0,\infty)$ is \emph{$\alpha$-concave} if $f$ has convex support $\supp(f)=\overline{\{x\in\R^n: f(x)>0\}}$, and for every $x,y\in\supp(f)$ and every $\lambda\in[0,1]$ it holds that
\begin{equation}
f(\lambda x+(1-\lambda)y) \geq M_\alpha^{(\lambda,1-\lambda)}(f(x),f(y)).
\end{equation}
Here and throughout the paper, for $\alpha\in[-\infty,+\infty]$ and $s,t,u,v>0$, the \emph{$\alpha$-means} $M_\alpha^{(s,t)}(u,v)$ are defined by
\begin{equation}
M_\alpha^{(s,t)}(u,v)=\begin{cases}
(su^\alpha+tv^\alpha)^{1/\alpha}, &\text{if }\alpha\neq 0\\
u^s v^t, &
\text{if }\alpha=0\\
\min\{u,v\}, &\text{if }\alpha=-\infty\\
\max\{u,v\}, &\text{if }\alpha=+\infty.
\end{cases}
\end{equation}
\noindent The cases $\alpha=-\infty,0,+\infty$ are each understood in a limiting sense.

On the family $\mathcal{C}_\alpha(\R^n)$  of $\alpha$-concave functions, one may define an addition operation $\star_\alpha$ and a scalar multiplication operation $\cdot_\alpha$, which are explained in  Subsection \ref{alpha-concave-defns}. In Section \ref{Minkowski-symm-section}, for $\alpha\leq 0$ we define the \emph{$\alpha$-Minkowski symmetral} $\tau_u^\alpha f$ of $f\in\mathcal{C}_\alpha(\R^n)$ by
\begin{equation}\label{alpha-symmetral}
\tau_u^\alpha f := \frac{1}{2}\cdot_\alpha f\star_\alpha\frac{1}{2}\cdot_\alpha R_u f,
\end{equation}
where $R_u f$ is the reflection of $f$ about the hyperplane $H$. We also show in Section \ref{Minkowski-symm-section} that the $\alpha$-Minkowski symmetral has  properties analogous to those of the classical Minkowski symmetral \eqref{minkowski-symmetral-convex-bodies}. In particular, it preserves the $\alpha$-mean width of $f$, an extension of the classical mean width to the realm of $\alpha$-concave functions introduced by Rotem \cite{Rotem2013}. It is also monotone, symmetric about the hyperplane $H$, and invariant on functions which are symmetric with respect to $H$. In the case $\alpha=-\infty$, it will be shown that the definitions \eqref{minkowski-symm-def-fns} and \eqref{alpha-symmetral} coincide.

Naturally, the question arises to study the convergence properties of successive Minkowski symmetrizations of functions. In Section \ref{section-successive-symmetrizations}, we show that there exists a sequence of Minkowski symmetrizations that hypo-converges to a radial function which we call its ``reflectional hypo-symmetrization''. This function is related to  the mean width rearrangement $u_p^*$ introduced by Salani \cite{Salani-2015} in the PDE setting, which is obtained from $p$-convolutions of rotated copies of a solution and a limiting rotation-mean procedure (see also \cite{Tso-1981}).

Next, in Section \ref{polytopes-section}, we present an application. First, we connect the approximation of convex bodies by inscribed polytopes to the approximation of log-concave functions by ``inner log-linearizations''. These log-affine minorants are defined in terms of  inner linearizations of the corresponding convex base function and extend the classical notion of a polytope inscribed in a convex body to the functional setting. In the main result of the section, we prove that the reflectional   hypo-symmetrization of $f$ is always harder to approximate than $f$ is by inner log-linearizations with a given number of ``break points''. This result is a functional extension of the classical geometric fact stating that among all convex bodies in $\R^n$ of a given mean width, a Euclidean unit ball is hardest to approximate by inscribed polytopes with a restricted number of vertices. In Subsection \ref{extremal-section}, we prove a general extremal property of the reflectional hypo-symmetrization of an $\alpha$-concave function. Finally, as an application, we derive a  functional version of Urysohn's inequality in Subsection \ref{sec:urysohn}.

The approximation result in Section \ref{polytopes-section} is dual in spirit to the general outer linearization theory recently developed in \cite{Hoehner-Mussnig}. There, one approximates coercive convex functions from below by supporting affine data, or equivalently, by slope data in the domain of the Legendre--Fenchel transform. In the present paper, by contrast, the relevant approximants are inner log-linearizations, or more generally inner $\alpha$-linearizations, determined by finitely many points in the epigraph of the base function or, equivalently, in the hypograph of the original function.

Before formally presenting our definitions and main results, we begin in the next section with some background and notation that will be used throughout the paper.
\section{Preliminaries}\label{preliminaries-section}

The $n$-dimensional Euclidean space $\R^n$ is equipped with inner product $\langle x,y\rangle = \sum_{i=1}^n x_i y_i$, where $x,y\in\R^n$, and Euclidean norm $|x|=\sqrt{\langle x,x\rangle}$. The interior, boundary and closure of a set $A\subset\R^n$ are denoted by $\operatorname{int}(A)$, $\partial A$ and $\overline{A}$, respectively.  The closed ball centered at $x\in\R^n$ with radius $r>0$ is $B(x,r)=\{y\in\R^n:\,|x-y|\leq r\}$. In particular, we denote the Euclidean unit ball in $\R^n$ centered at the origin $o\in\R^n$ by $B_n=B(o,1)=\{x\in\R^n: |x|\leq 1\}$. The unit sphere in $\R^n$ is denoted $\Sp=\partial B_n=\{x\in\R^n: |x|=1\}$. 

Given $m,j\in\mathbb{N}$ with $m\geq j$, the Grassmannian manifold of $j$-dimensional subspaces of $\R^m$ is denoted $\Gr(m,j)$. For a hyperplane $H\in\Gr(n,n-1)$, we set  $\widetilde{H}=H\times\R$ and note that $\widetilde{H}\in\Gr(n+1,n)$. The orthogonal complement of $u\in\Sp$ is $u^\perp=\{x\in\R^n:\,\langle x,u\rangle=0\}\in\Gr(n,n-1)$. 

For two functions $f,g:\R^n\to[-\infty,+\infty]$,  the notation $f\leq g$ means that  $f(x)\leq g(x)$ pointwise for all $x$. The function $f$ is a \emph{minorant} of $g$ if $f\leq g$. We say that a function $g:\R^n\to\R$ is \emph{radial} if it is invariant under transformations in $\mathrm{O}(n)$, i.e., $g\circ\rho=g$ for all $\rho\in\mathrm{O}(n)$. 


\subsection{Background on convex sets in $\R^n$}

Let $A$ be a subset of $\R^n$. The \emph{convex hull} $\conv(A)$ of $A$ is the smallest convex set containing $A$, that is, $\conv(A)=\cap \{C: C\supset A,\, C\text{ is convex}\}$. The convex hull of a finite set of points in $\R^n$ is called a \emph{polytope}. An extreme point of a polytope $P$ is called a  \emph{vertex} of $P$.

 A \emph{convex body} $K$ is a convex, compact subset of $\R^n$ with nonempty interior. The set of all convex bodies in $\R^n$ is denoted by $\mathcal{K}^n$.  For $K,L\in\mathcal{K}^n$, the \emph{Hausdorff metric} $d_H:\mathcal{K}^n\times\mathcal{K}^n\to[0,\infty)$ is defined by 
		\begin{equation*}
		    d_H(K,L) = \inf \{ \lambda\geq 0 : K\subset L+\lambda B_n,\, L\subset K+\lambda B_n\}.
		\end{equation*}

The \emph{support function} $h_K:\Sp\to\R$ of $K\in\mathcal{K}^n$ is given by $h_K(u)=\sup_{x\in K}\langle x,u\rangle$. It is convex, and it is Minkowski linear in the sense that for any $K,L\in\mathcal{K}^n$ and any $a,b\geq 0$, we have $h_{aK+bL}=ah_K+bh_L$. The 
\emph{mean width} of $K$ is $w(K)=2\int_{\Sp}h_K(u)\,d\sigma(u)$, where $\sigma$ denotes the normalized surface area measure on  $\Sp$. The mean width inherits Minkowski linearity from the support function, that is,    $w(aK+bL)=aw(K)+bw(L)$ for all $K,L\in\mathcal{K}^n$ and all $a,b\geq 0$, and it is invariant under orthogonal transformations. Therefore, the Minkowski symmetrization \eqref{minkowski-symmetral-convex-bodies} preserves mean width:
\[
w(\tau_u K)=\frac{1}{2}w(K)+\frac{1}{2}w(R_u K)=w(K).
\]

\noindent For more background on convex bodies, we refer the reader to the monographs by Gruber    \cite{GruberBook} and Schneider \cite{SchneiderBook}.

\subsection{Background on convex functions and log-concave functions}

Next, we present the background from convex analysis that will be used throughout the paper. For general references on convex and log-concave   functions, we refer the reader to, for example, \cite{AKM2004, Colesanti-inbook, RockafellarBook, Rockafellar-Wets}.

Let $\mathrm{Conv}(\R^n)=\{\psi:\R^n\to(-\infty,+\infty]:\, \psi\text{ is convex}\}$. A natural way of embedding $\mathcal{K}^n$ into $\mathrm{Conv}(\R^n)$  comes via the \emph{indicator  function}, which for $K\in\mathcal{K}^n$  is defined by 
\begin{equation*}
I_K^\infty(x)=
    \begin{cases}
        0, & \text{if } x \in K\\
        +\infty, & \text{if } x\not\in K.
    \end{cases}
\end{equation*}
This function is convex, and we may embed  $\mathcal{K}^n$ into $\mathrm{Conv}(\R^n)$ via the mapping $K\mapsto I_K^\infty$. 

However, this approach has an immediate limitation: whereas every convex body has finite volume, a convex function need not be integrable. To surmount this obstacle, $\mathcal{K}^n$ is often embedded into  $\LC(\R^n)=\{f=e^{-\psi}:\,\psi\in\mathrm{Conv}(\R^n)\}$, the class of log-concave functions on $\R^n$. A function $f:\R^n\to[0,\infty)$ is \emph{logarithmically concave} (or \emph{log-concave}) if $\log f$ is concave, i.e., if for all $x,y\in\R^n$ and all $\lambda\in[0,1]$,
\[
\log f(\lambda x+(1-\lambda)y)\geq\lambda\log f(x)+(1-\lambda)\log f(y),
\]
which is equivalent to
\[
f(\lambda x+(1-\lambda)y)\geq f(x)^\lambda f(y)^{1-\lambda}.
\]
Every log-concave function $f\in\LC(\R^n)$ can be expressed in the form $f=e^{-\psi}$ for some $\psi\in\mathrm{Conv}(\R^n)$.  The usual embedding $\mathcal{K}^n\hookrightarrow\LC(\R^n)$ is achieved via the mapping $K\mapsto \mathbbm{1}_K$, where $\mathbbm{1}_K$ is the \emph{characteristic function} of $K$   defined by
\begin{equation*}
\mathbbm{1}_K(x)=
    \begin{cases}
        1, & \text{if } x \in K\\
        0, & \text{if } x\not\in K.
    \end{cases}
\end{equation*}
This function is log-concave since $\mathbbm{1}_K=e^{-I_K^\infty}$. 

Further conditions are typically imposed on the convex functions for technical reasons. We say that $\psi\in\mathrm{Conv}(\R^n)$ is \emph{proper} if $\dom(\psi)\neq\varnothing$, where  $\dom(\psi)=\{x\in\R^n: \psi(x)<+\infty\}$ is the \emph{effective domain} of $\psi$.  The function $\psi\in\mathrm{Conv}(\R^n)$ is \emph{coercive} if $\lim_{|x|\to\infty}\psi(x)=+\infty$. We let 
\[
\mathrm{Conv}_{\rm c}(\R^n)=\left\{\psi\in\mathrm{Conv}(\R^n): \, \substack{\psi\text{ is convex, coercive, proper}\\ \text{and lower semicontinuous}}\right\}
\]
 and
\[
\LC_{\rm c}(\R^n)=\{f=e^{-\psi}: \,\psi\in\mathrm{Conv}_{\rm c}(\R^n)\}.
\]
Note that if $f\in\LC_{\rm c}(\R^n)$, then $f\not\equiv 0$, $f$ is upper semicontinuous and $\lim_{|x|\to\infty}f(x)=0$.  We also let $\supp(f)=\{x\in\R^n: f(x)>0\}$ denote the \emph{support} of a function $f:\R^n\to\R$. 

For  $f\in\LC_{\rm c}(\R^n)$, the \emph{total mass} of $f$ is $J(f)=\int_{\R^n}f(x)\,dx$. It can be expressed via the  layer cake formula
\begin{equation}\label{total-mass-def}
J(f)=\int_0^\infty \vol_n(\lev_{\geq t}f)\,dt.
\end{equation}
Moreover,  every $f\in\LC_{\rm c}(\R^n)$ is integrable (see, e.g.,  \cite{cordero-erasquin-klartag}). Thus, in the functional setting, the total mass is a  natural analogue of volume since for the indicator function of a convex body $K\in\mathcal{K}^n$, we have  $J(\mathbbm{1}_K)=\vol_n(K)$. 

Every convex function $\psi:\R^n\to\R$ is uniquely determined by its \emph{epigraph} $\epi(\psi)=\{(x,t)\in\R^n\times\R: t\geq\psi(x)\}$. Equivalently, every log-concave function $f=e^{-\psi}\in\LC(\R^n)$ is uniquely determined by its \emph{hypograph} $\hyp(f)=\{(x,t)\in\R^{n}\times\R: t\leq f(x)\}$. 
For a function $f:\R^n\to[0,\infty)$ and  $t\geq 0$, the \emph{sublevel set} $\lev_{\leq t}f$ and the \emph{superlevel set} $\lev_{\geq t}f$ are  defined by 
\begin{align*}
\lev_{\leq t}f&=\{x\in\R^n: f(x)\leq t\}
\quad\text{and}\quad\lev_{\geq t}f=\{x\in\R^n: f(x)\geq t\}.
\end{align*}
Note that for every $K\in\mathcal{K}^n$ and every $t>0$, we have 
\begin{equation}\label{indicator-level-set}
\lev_{\geq t}\mathbbm{1}_K =\begin{cases}
        K, & \text{if } 0<t\leq 1\\
        \varnothing, & \text{if } t>1.
    \end{cases}
\end{equation}

The semicontinuity condition on $\mathrm{Conv}_{\rm c}(\R^n)$ is the functional analogue of the assumption that the convex sets in $\mathcal{K}^n$ are closed. More specifically, if $\psi$ is convex and lower semicontinuous, then its sublevel sets are convex and closed, respectively. Moreover, if $\psi\in\mathrm{Conv}_{\rm c}(\R^n)$ (respectively, $f=e^{-\psi}\in\LC_{\rm c}(\R^n)$), then the sublevel sets of $\psi$ (respectively, superlevel sets of $f$) are bounded since $\psi$ is coercive. 

Given an upper semicontinuous function $f:\R^n\to[0,\infty)$ and a hyperplane
$H=u^\perp\in \Gr(n,n-1)$, define its \emph{projection onto $H$} by
\[
\proj_H f(x):=\sup_{t\in\R} f(x+tu),\quad x\in H.
\]
If, in addition, $f(x)\to 0$ as $|x|\to\infty$, then for every $x\in H$ the supremum is attained,
and therefore for every $t\ge 0$,
\[
\lev_{\ge t}(\proj_H f)=\proj_H(\lev_{\ge t}f).
\]
In particular, this identity holds for every $f\in\LC_{\rm c}(\R^n)$. Moreover, for every upper semicontinuous $f$,
\[
\hyp(\proj_H f)=\proj_{H\times\R}(\hyp f).
\]
Note also that for any set $A\subset\R^n$, we have $\proj_H \mathbbm{1}_A = \mathbbm{1}_{\proj_H A}$. 
\noindent  For more background on projection functions, see, for example, \cite{CGN-2018} and \cite[Section 3.1]{CLM-Hadwiger2}.

A sequence of functions $\{\psi_j\}_{j\in\mathbb{N}}\subset\mathrm{Conv}(\R^n)$ \emph{epi-converges} to $\psi\in\mathrm{Conv}(\R^n)$, denoted by $\psi_j\stackrel{\text{epi}}{\longrightarrow}\psi$ or $\psi=\epilim_{j\in\mathbb{N}}\psi_j$, if for every $x\in\R^n$:
\begin{itemize}
\item[(i)] $\psi(x)\leq\liminf_{j\to\infty}\psi_j(x_j)$ for every sequence $\{x_j\}_{j\in\mathbb{N}}\subset\R^n$ that converges to $x$;

\item[(ii)] $\psi(x)=\lim_{j\to\infty}\psi_j(x_j)$ for at least one sequence $\{x_j\}_{j\in\mathbb{N}}\subset\R^n$ that converges to $x$.
\end{itemize}
A symmetric notion of convergence called hypo-convergence is defined in an analogous way. In particular, a sequence  $f_j=e^{-\psi_j}\in\LC(\R^n)$ \emph{hypo-converges} to $f=e^{-\psi}\in\LC(\R^n)$
if and only if the sequence $\psi_j\in\mathrm{Conv}(\R^n)$ epi-converges to $\psi\in\mathrm{Conv}(\R^n)$. In this case, we write $f_j\stackrel{\text{hyp}}{\longrightarrow}f$ or $f=\hypolim_{j\in\mathbb{N}} f_j$.

\vspace{2mm}

The following result will be used  in the proofs of some of our technical lemmas.

\begin{lemma}
\label{le:epi_conv_pointwise}\cite[Theorem 7.17]{Rockafellar-Wets}
Let $\psi_j\colon \R^n\to(-\infty,\infty]$, $j\in\N$, be a sequence of convex functions. If $\psi\colon \R^n\to(-\infty,\infty]$ is a lower semicontinuous, convex function such that $\dom(\psi)$ has nonempty interior, then $\psi=\epilim_{j\in\N} \psi_j$ if and only if $\psi_j$ converges pointwise to $\psi$ on a dense subset of $\R^n$. Equivalently, $\psi_j$ converges uniformly to $\psi$ on every compact set that does not contain a boundary point of $\dom(\psi)$.
\end{lemma}


\subsubsection{Operations on $\mathrm{Conv}(\R^n)$ and $\LC(\R^n)$}

The Minkowski sum and $\lambda$-homothety operations on $\mathcal{K}^n$ extend to operations on convex functions as follows. For $\varphi,\psi\in\mathrm{Conv}(\R^n)$ and $x\in\R^n$, the \emph{infimal convolution} $\varphi\square\psi$ is defined by
\[
(\varphi\square\psi)(x)=\inf_{y+z=x}\{\varphi(y)+\psi(z)\}.
\]
If $(\varphi\square\psi)(x)>-\infty$ for every $x\in\R^n$, then $\varphi\square\psi$ is convex, proper and satisfies the identity $\epi(\varphi\square\psi)=\epi(\varphi)+\epi(\psi)$. The \emph{epi-multiplication} $\lambda\sq\psi$ of $\psi\in\mathrm{Conv}(\R^n)$ and $\lambda>0$ is the function defined by 
\[
(\lambda\sq\psi)(x) = \lambda\psi\left(\frac{x}{\lambda}\right),\quad x\in\R^n.
\]
The \emph{Asplund sum} (or \emph{supremal convolution}) $f\star g$ of log-concave functions $f,g\in\LC(\R^n)$  is defined as
\[
(f\star g)(x)=\sup_{y+z=x}f(y)g(z),\quad x\in\R^n
\]
whenever the supremum exists. For $\lambda>0$, the \emph{$\lambda$-homothety} $\lambda\cdot f$ of $f$ is defined by
\[
(\lambda\cdot f)(x)=f\left(\frac{x}{\lambda}\right)^\lambda,\quad x\in\R^n.
\]

These operations are related as follows. For all  $\varphi,\psi\in\mathrm{Conv}_{\rm c}(\R^n)$ with  $f=e^{-\varphi},g=e^{-\psi}\in\LC_{\rm c}(\R^n)$ and all $a,b>0$ and $x\in\R^n$, we have
\begin{align*}
(a\cdot f\star b\cdot g)(x)&=\sup_{y+z=x}f\left(\frac{y}{a}\right)^a g\left(\frac{z}{b}\right)^b\\
&=\exp\left(-\inf_{y+z=x}\left\{a\varphi\left(\frac{y}{a}\right)+b\psi\left(\frac{z}{b}\right)\right\}\right)=\exp(-(a\sq\varphi\square b\sq\psi)(x)).
\end{align*}

It turns out that $\LC_{\rm c}(\R^n)$ is closed under the Asplund sum and $\lambda$-homothety operations, i.e., if $f,g\in\LC_{\rm c}(\R^n)$, then $f\star g\in\LC_{\rm c}(\R^n)$  and $\lambda\cdot f\in\LC_{\rm c}(\R^n)$. Moreover, if $a,b>0$ then $a\cdot f\star b\cdot f=(a+b)\cdot f$ (see, for example, \cite[Lemma 2.3(ii)]{Hofstatter-Schuster} or Lemma \ref{lem:alpha-distributivity} below). These operations extend the classical operations on convex bodies, in the sense that if $K,L\in\mathcal{K}^n$ and $a,b>0$, then $(a\sq I_K^\infty)\square(b\sq I_L^\infty)=I_{aK+bL}^\infty$ and  $a\cdot\mathbbm{1}_K\star b\cdot\mathbbm{1}_L=\mathbbm{1}_{aK+bL}$.

The  Pr\'ekopa--Leindler inequality implies that for any integrable functions $f,g\in\LC(\R^n)$ and any $\lambda\in(0,1)$,
\begin{equation}\label{PL-ineq}
    J(\lambda \cdot f \star (1-\lambda)\cdot g)\geq J(f)^\lambda J(g)^{1-\lambda}.
\end{equation}
This inequality is originally due to Pr\'ekopa \cite{Prekopa1} and Leindler \cite{Leindler} in dimension $n=1$. It was later extended to all dimensions by Pr\'ekopa \cite{Prekopa2,Prekopa3} and Borell \cite{Borell}. Under the mild assumption that $J(f)$ and $J(g)$ are both positive, the equality cases were characterized by Dubuc \cite{Dubuc}. (In degenerate cases, additional possibilities for equality may occur, but we will not need a full equality characterization here.) The Pr\'ekopa--Leindler inequality is a functional extension of the Brunn--Minkowski inequality in multiplicative form for convex bodies: choosing $f=\mathbbm{1}_K$ and $g=\mathbbm{1}_L$ where $K,L\in\mathcal{K}^n$,  we get
\[
\vol_n(\lambda K+(1-\lambda)L)\geq \vol_n(K)^{\lambda}\vol_n(L)^{1-\lambda}.
\]
When $\vol_n(K)\vol_n(L)>0$, as is the case since $K,L\in\mathcal{K}^n$, equality in this multiplicative form holds if and only if $K$ and $L$ are translates. 
For more background on the Brunn--Minkowski and Pr\'ekopa--Leindler inequalities, we refer the reader to, e.g.,  \cite{gardnerBM,SchneiderBook}.

\subsubsection{Properties of the Legendre--Fenchel transform}

Let $\psi\in\mathrm{Conv}(\R^n)$ be proper and lower semicontinuous. The \emph{Legendre--Fenchel (conjugate) transform} $\mathcal{L}\psi$ of $\psi$   is defined by 
\[
(\mathcal{L}\psi)(x)=\sup_{y\in\R^n}(\langle x,y\rangle-\psi(y)). 
\]
In particular, $\mathcal{L} \psi$ is a  proper, lower semicontinuous, convex function on $\R^n$ (see, e.g., \cite[Theorem 1.6.13]{RockafellarBook}). Furthermore, it satisfies the following properties (see \cite{RockafellarBook,Rockafellar-Wets}):

\begin{lemma}\label{conjugate-properties}
Let $\psi_j,\psi,\varphi\in\mathrm{Conv}(\R^n)$ be proper and lower semicontinuous. Then the following properties hold true: 
\begin{itemize}
\item[(i)] (Involution) $\mathcal{L}\mathcal{L}\psi=\psi$.

\item[(ii)] (Order-reversing) If $\psi\leq\varphi$, then $\mathcal{L}\psi\geq\mathcal{L}\varphi$.

\item[(iii)] (Dilation property) For any $a\geq 0$ we have $\mathcal{L}(a\sq\psi)=a\mathcal{L}\psi$.

\item[(iv)] (Additivity) $\mathcal{L}(\psi\square\varphi)=\mathcal{L}\psi+\mathcal{L}\varphi$.

\item[(v)] (Indicators transform into support functions) If $K\in\mathcal{K}^n$, then $\mathcal{L}(I_K^\infty)=h_K$.

\item[(vi)] (Equivalent modes of epi-convergence) $\psi_j\stackrel{\epi}{\longrightarrow}\psi$ if and only if  $\mathcal{L}\psi_j\stackrel{\epi}{\longrightarrow}\mathcal{L}\psi$.
\end{itemize}
\end{lemma}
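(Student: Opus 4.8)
The plan is to verify each item essentially from the definition $(\mathcal{L}\psi)(x)=\sup_{y\in\R^n}(\langle x,y\rangle-\psi(y))$, using the fact that $\mathcal{L}\psi$ is always convex and lower semicontinuous (being a pointwise supremum of affine functions), together with standard facts from convex analysis. For (i), the Fenchel–Moreau theorem states exactly that $\mathcal{L}\mathcal{L}\psi=\psi$ for proper, lower semicontinuous convex $\psi$; I would either cite this directly from \cite{RockafellarBook,Rockafellar-Wets} or sketch it via the fact that $\mathcal{L}\mathcal{L}\psi$ is the largest lower semicontinuous convex minorant of $\psi$, which equals $\psi$ under our hypotheses. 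Item (ii) is immediate: if $\psi\leq\varphi$ pointwise, then $\langle x,y\rangle-\psi(y)\geq\langle x,y\rangle-\varphi(y)$ for every $y$, so taking suprema gives $\mathcal{L}\psi\geq\mathcal{L}\varphi$.

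For (iii), compute directly for $a>0$: since $(a\sq\psi)(y)=a\psi(y/a)$, substituting $z=y/a$ gives
\[
\mathcal{L}(a\sq\psi)(x)=\sup_{y}\bigl(\langle x,y\rangle-a\psi(y/a)\bigr)=\sup_{z}\bigl(\langle x,az\rangle-a\psi(z)\bigr)=a\sup_{z}\bigl(\langle x,z\rangle-\psi(z)\bigr)=a(\mathcal{L}\psi)(x),
\]
and the case $a=0$ is handled separately by interpreting $0\sq\psi$ as the indicator of the origin (or of $\overline{\dom\psi}$'s recession behavior, depending on the paper's convention) so that $\mathcal{L}(0\sq\psi)\equiv 0=0\cdot\mathcal{L}\psi$. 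For (iv), the identity $\mathcal{L}(\psi\square\varphi)=\mathcal{L}\psi+\mathcal{L}\varphi$ follows by unwinding the infimal convolution: $\mathcal{L}(\psi\square\varphi)(x)=\sup_{y}\bigl(\langle x,y\rangle-\inf_{u+v=y}(\psi(u)+\varphi(v))\bigr)=\sup_{u,v}\bigl(\langle x,u\rangle-\psi(u)+\langle x,v\rangle-\varphi(v)\bigr)=\mathcal{L}\psi(x)+\mathcal{L}\varphi(x)$, where splitting the supremum over the pair $(u,v)$ is valid because the objective separates. Item (v) is a direct computation: $\mathcal{L}(I_K^\infty)(x)=\sup_{y}(\langle x,y\rangle-I_K^\infty(y))=\sup_{y\in K}\langle x,y\rangle=h_K(x)$.

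Finally, (vi) is the genuine content: the continuity of the Legendre transform with respect to epi-convergence. I would invoke Wijsman's theorem (see \cite[Thm.~11.34]{Rockafellar-Wets}), which states precisely that on the space of proper lower semicontinuous convex functions, $\psi_j\stackrel{\epi}{\to}\psi$ if and only if $\mathcal{L}\psi_j\stackrel{\epi}{\to}\mathcal{L}\psi$; the ``if'' direction then follows from the ``only if'' direction applied to the conjugates together with (i). The main obstacle is item (vi): unlike (ii)–(v), it is not a one-line computation but a bicontinuity result whose honest proof requires either the machinery of Attouch–Wets/Mosco convergence or a careful $\liminf$/$\limsup$ argument tracking both conditions (i) and (ii) in the definition of epi-convergence through the sup-defining the conjugate. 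Since the excerpt explicitly permits citing \cite{RockafellarBook,Rockafellar-Wets}, the cleanest route is to attribute (i) to Fenchel–Moreau and (vi) to Wijsman, and give the short direct arguments for the remaining four items.
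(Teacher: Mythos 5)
Your proposal is correct, and it matches the paper's approach: the paper gives no proof at all, simply citing \cite{RockafellarBook,Rockafellar-Wets} for the whole lemma and pinpointing \cite[Thm.~11.34]{Rockafellar-Wets} for part (vi). Your outline does the same for the two genuinely non-elementary items (Fenchel--Moreau for (i), Wijsman for (vi)) and additionally spells out the short direct computations for (ii)--(v), which the paper leaves to the reader; the only point worth tightening is the $a=0$ case in (iii), since the paper defines $\lambda\sq\psi$ only for $\lambda>0$, so you should state explicitly which convention for $0\sq\psi$ (e.g.\ $0\sq\psi=I^\infty_{\{0\}}$) makes the identity $\mathcal{L}(0\sq\psi)\equiv 0=0\cdot\mathcal{L}\psi$ hold.
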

\noindent In particular, part (vi) can be found in \cite[Theorem 11.34]{Rockafellar-Wets}.

\subsection{Quasiconcave functions  and the layer cake definition of mean width}

A nonnegative function $f:\R^n\to[0,+\infty]$ is \emph{quasiconcave} if  for all $x,y\in\R^n$ and all $\lambda\in[0,1]$,
\[
f(\lambda x+(1-\lambda)y) \geq  \min\{f(x),f(y)\}.
\]
In particular, every log-concave function is quasiconcave. The class of convex bodies in $\R^n$ can be embedded into the class of quasiconcave functions via the mapping $K\mapsto\mathbbm{1}_K$.  
It turns out that a function $f:\R^n\to[0,\infty)$ is quasiconcave if and only if its superlevel sets $\lev_{\geq t}f$ are convex. For the quasiconcave functions we will be interested in, we will impose the same restrictions as before and consider the class
\[
\mathcal{C}_{-\infty}(\R^n):=\left\{f:\R^n\to[0,\infty)\bigg| \,\substack{f\text{ is }\text{quasiconcave, upper semicontinuous,}\\{{\text{proper, and }\lim_{|x|\to\infty}f(x)=0}}}\right\}.
\]

In view of the layer cake representation  \eqref{total-mass-def} of the total mass, Bobkov, Colesanti and Fragal\`a \cite{BCF-2014}  defined the quermassintegrals of a quasiconcave function in an analogous way. 
In particular, the \emph{mean width} $w(f)$ of $f\in\mathcal{C}_{-\infty}(\R^n)$ may be defined as
\begin{equation}\label{level-set-mw}
w(f) = \int_0^\infty w(\lev_{\geq t}f)\,dt.
\end{equation}

\subsection{Algebraic operations on $\alpha$-concave functions}\label{alpha-concave-defns}


Given $\alpha\in[-\infty,+\infty]$, we shall consider the following family of $\alpha$-concave functions:
\[
\mathcal{C}_{\alpha}(\R^n):=\left\{f:\R^n\to[0,\infty)\bigg| \,\substack{f\text{ is }\alpha\text{-concave, upper semicontinuous,}\\{{\text{proper, and }\lim_{|x|\to\infty}f(x)=0}}}\right\}.
\]
Note that  if $\alpha_1<\alpha_2$, then $\mathcal{C}_{\alpha_1}(\R^n)\supset\mathcal{C}_{\alpha_2}(\R^n)$ (see \cite{BrascampLieb-1976}). Choosing special values of $\alpha$ yields the following notable subfamilies:
\begin{itemize}
\item $\mathcal{C}_{-\infty}(\R^n)$ is the class of nonnegative quasiconcave functions. It is the largest class of $\alpha$-concave functions since   $\mathcal{C}_{-\infty}(\R^n)\supset \mathcal{C}_\alpha(\R^n)$ for every $\alpha\in[-\infty,+\infty]$. 

\item $\mathcal{C}_0(\R^n)=\LC_{\rm c}(\R^n)$ is the class of log-concave functions on $\R^n$.

\item $\mathcal{C}_1(\R^n)$ is the class of concave functions defined on convex sets $\Omega$ and extended by 0 outside of $\Omega$.

\item $\mathcal{C}_{+\infty}(\R^n)$ is the class of multiples of characteristic functions of convex sets in $\R^n$.
\end{itemize}

There are several standard ways of inducing an algebraic structure on $\mathcal{C}_\alpha(\R^n)$. The first definition we present can be found in, e.g., \cite{BCF-2014, Roysdon-Xing-Lp-functions}.

\begin{definition}\label{alpha-operations}
Let $\alpha\in[-\infty,+\infty]$ be given, and let $f,g\in\mathcal{C}_\alpha(\R^n)$.
\begin{itemize}
    \item[(i)]   The $\oplus_\alpha$-addition $f\oplus_\alpha g$ of $f$ and $g$ is defined by
\begin{equation}
(f\oplus_\alpha g)(x)=\sup_{y+z=x}M_\alpha^{(1,1)}(f(y),g(z))=\sup_{y+z=x}\begin{cases}
(f(y)^\alpha+g(z)^\alpha)^{1/\alpha}, &\text{if }\alpha\neq 0,\pm\infty;\\
f(y)g(z), &\text{if } \alpha=0;\\
\max\{f(y),g(z)\}, &\text{if }\alpha=+\infty;\\
\min\{f(y),g(z)\}, &\text{if }\alpha=-\infty.
\end{cases}
\end{equation}

\item[(ii)] For $\lambda>0$, the $\times_\alpha$ scalar multiplication $\lambda\times_\alpha f$ is defined  by
\begin{equation}
(\lambda\times_\alpha f)(x)=\begin{cases}
\lambda^{1/\alpha}f\left(\tfrac{x}{\lambda}\right), &\text{if }\alpha\neq 0,\pm\infty;\\
f(x)^\lambda, &\text{if }\alpha=0;\\
f(x), &\text{if }\alpha=\pm\infty.
\end{cases}
\end{equation}
\end{itemize}
\end{definition}

Another point of view may be taken by extending the  Asplund sum from the family of log-concave functions to the family of $\alpha$-concave functions. This approach is especially advantageous because in many applications it allows one to appeal directly to the geometry of the  epigraphs of convex functions. Let $\alpha\in(-\infty,0)$. Every function $f\in\mathcal{C}_\alpha(\R^n)$ admits a representation of the form
\begin{equation}\label{alpha-concave-rep}
f(x)=(1-\alpha\psi_{f,\alpha}(x))^{1/\alpha},
\end{equation}
where $\psi_{f,\alpha}\in\mathrm{Conv}(\R^n)$. This convex function is given by the following construction. For $\alpha\in(-\infty,0)$, the \emph{base function} of $f$, denoted by $\base_\alpha f$, is defined by
\[
\base_\alpha f:=\frac{1-f^\alpha}{\alpha}
\]
(see \cite{Rotem2013}). Equivalently, $\base_\alpha f$ is the unique function satisfying
\[
f(x)=(1-\alpha\,\base_\alpha f(x))^{1/\alpha}
\qquad\text{for all }x\in\R^n.
\]
Thus, in \eqref{alpha-concave-rep}, one may simply take
\[
\psi_{f,\alpha}=\base_\alpha f.
\] 
In the limiting case $\alpha= 0$, we have the identity $\base_0 f=-\log f$. Note also that if $K\in\mathcal{K}^n$, then $\base_\alpha\mathbbm{1}_K=I_K^\infty$ for every $\alpha\in(-\infty,0]$. 

The next lemma shows that the base function is well-defined when $\alpha\in(-\infty,0]$.

\begin{lemma}\label{lem:base-alpha-convc}
Let $\alpha\in(-\infty,0]$ and $f\in \mathcal{C}_\alpha(\R^n)$.  For $x\in\R^n$, define the base function of $f$ by
\[
\base_\alpha f(x):=
\begin{cases}
\dfrac{1-f(x)^\alpha}{\alpha}, & \text{if }\alpha<0 \text{ and } f(x)>0\\[1ex]
+\infty, & \text{if }\alpha<0 \text{ and } f(x)=0\\[1ex]
-\log f(x), &\text{if } \alpha=0 \text{ and } f(x)>0\\[1ex]
+\infty, &\text{if } \alpha=0 \text{ and } f(x)=0.
\end{cases}
\]
Then $\base_\alpha f\in \mathrm{Conv}_{\rm c}(\R^n)$.
\end{lemma}

\begin{proof}
Set $\psi:=\base_\alpha f$. We treat the cases $\alpha<0$ and $\alpha=0$ separately.

\medskip
\noindent
\underline{\emph{Case 1: $\alpha<0$.}} 
Since $f\in \mathcal{C}_\alpha(\R^n)$ is proper, $f$ is not identically zero, so there exists $x_0\in\R^n$ such that
$f(x_0)>0$; hence
\[
\psi(x_0)=\frac{1-f(x_0)^\alpha}{\alpha}<\infty.
\]
Thus $\psi\not\equiv+\infty$, i.e., $\psi$ is proper.

Next, we prove that $\psi$ is convex. Let $x,y\in\R^n$ and $\lambda\in[0,1]$, and set
$z:=\lambda x+(1-\lambda)y$.
If either $f(x)=0$ or $f(y)=0$, then either $\psi(x)=+\infty$ or $\psi(y)=+\infty$, respectively, so
\[
\psi(z)\leq \lambda\psi(x)+(1-\lambda)\psi(y)
\]
holds automatically. Assume now that $f(x),f(y)>0$. Since $f$ is $\alpha$-concave,
\[
f(z)\geq \bigl(\lambda f(x)^\alpha+(1-\lambda)f(y)^\alpha\bigr)^{1/\alpha}.
\]
Moreover, since $\alpha<0$ the map $t\mapsto t^\alpha$ is decreasing on $(0,\infty)$, and therefore
\[
f(z)^\alpha\leq \lambda f(x)^\alpha+(1-\lambda)f(y)^\alpha.
\]
Hence
\[
1-f(z)^\alpha
\ge
\lambda\bigl(1-f(x)^\alpha\bigr)+(1-\lambda)\bigl(1-f(y)^\alpha\bigr).
\]
Dividing by the negative number $\alpha$ reverses the inequality and gives
\[
\psi(z)\le \lambda\psi(x)+(1-\lambda)\psi(y).
\]
Thus, $\psi$ is convex.

Next we show that $\psi$ is lower semicontinuous. Let $t\in\R$. If $t\leq 1/\alpha$, then
$\lev_{\leq t}\psi=\varnothing$ (which is closed), because for every $x\in\R^n$ with $f(x)>0$, since $\alpha<0$ we have
\[
\psi(x)=\frac{1-f(x)^\alpha}{\alpha}>\frac{1}{\alpha},
\]
while if $f(x)=0$ then $\psi(x)=+\infty$. Now let $t>1/\alpha$. Since $\alpha<0$,
\[
\psi(x)\leq t
\iff
\frac{1-f(x)^\alpha}{\alpha}\leq t
\iff
f(x)^\alpha\leq 1-\alpha t
\iff
f(x)\geq (1-\alpha t)^{1/\alpha}.
\]
Therefore,
\[
\lev_{\leq t}\psi=\{x\in\R^n:\ f(x)\geq (1-\alpha t)^{1/\alpha}\}=\lev_{\geq(1-\alpha t)^{1/\alpha}}f,
\]
which is closed since $f$ is upper semicontinuous. Thus, $\psi$ is lower semicontinuous.

Finally, we prove that $\psi$ is coercive. Let $x_k\in\R^n$, $k\in\mathbb{N}$, be a sequence with $|x_k|\to\infty$. Since $f\in \mathcal{C}_\alpha(\R^n)$, by definition
$f(x_k)\to 0$. If $f(x_k)=0$ infinitely often, then $\psi(x_k)=+\infty$ along a subsequence. Otherwise, $f(x_k)>0$ for all sufficiently large $k$, and since $\alpha<0$ we have
$f(x_k)^\alpha\to +\infty$. Hence
\[
\psi(x_k)=\frac{1-f(x_k)^\alpha}{\alpha}\longrightarrow +\infty.
\]
Thus $\psi$ is coercive, so $\psi\in\mathrm{Conv}_{\rm c}(\R^n)$.

\medskip
\noindent
\underline{\emph{Case 2: $\alpha=0$.}}
In this case, we have
\[
\psi(x)=
\begin{cases}
-\log f(x), &\text{if } f(x)>0\\
+\infty, &\text{if } f(x)=0.
\end{cases}
\]
Again, $\psi$ is finite at some point because $\supp(f)$ is nonempty, so $\psi$ is proper.

To prove convexity, let $x,y\in\R^n$, $\lambda\in[0,1]$, and set $z=\lambda x+(1-\lambda)y$.
If $f(x)=0$ or $f(y)=0$, then the desired inequality is immediate. If $f(x),f(y)>0$, then log-concavity yields
\[
f(z)\geq f(x)^\lambda f(y)^{1-\lambda},
\]
and taking $-\log$ of both sides we get
\[
\psi(z)\leq \lambda\psi(x)+(1-\lambda)\psi(y).
\]
Thus $\psi$ is convex.

For lower semicontinuity, let $t\in\R$. Then
\[
\lev_{\leq t}\psi=\{x\in\R^n:\, f(x)\geq e^{-t}\}=\lev_{\geq e^{-t}}f,
\]
which is closed because $f$ is upper semicontinuous. Thus $\psi$ is lower semicontinuous.

For coercivity, let $x_k\in\R^n$, $k\in\mathbb{N}$, be a sequence with $|x_k|\to\infty$. Since $f(x_k)\to 0$, we get
\[
\psi(x_k)=-\log f(x_k)\to+\infty
\]
as $k\to\infty$ (with the convention $-\log 0=+\infty$). Hence, $\psi$ is coercive, which proves that $\psi\in\mathrm{Conv}_{\rm c}(\R^n)$.
\end{proof}

The following definition is due to Rotem \cite{Rotem2013}. 
\begin{definition}\label{alpha-Asplund-sum-def}
Given $\alpha\in(-\infty,0]$, $\lambda>0$  and $f,g\in\mathcal{C}_\alpha(\R^n)$, the \emph{$\alpha$-Asplund sum} $f\star_\alpha g$ and the \emph{$(\alpha,\lambda)$-homothety} $\lambda\cdot_\alpha f$ are defined by
\begin{equation}\label{base-fn}
\base_\alpha(f\star_\alpha g)=(\base_\alpha f)\square(\base_\alpha g)\qquad \text{and}\qquad \base_\alpha(\lambda\cdot_\alpha f)=\lambda\sq\base_\alpha f
\end{equation}
where the $\sq$  means epi-multiplication; that is,   $[\base_\alpha(\lambda\cdot_\alpha f)](x)=\lambda(\base_\alpha f)\left(\frac{x}{\lambda}\right)$ for all $x\in\R^n$. 
\end{definition}
In particular, $\star_0$ is just the Asplund sum $\star$ on $\LC(\R^n)$, and $\cdot_0$ is just the usual $\lambda$-homothety operation $\cdot$. Note that if $\alpha\in(-\infty,0]$, then  $\mathcal{C}_\alpha(\R^n)$ is closed under these operations, i.e., if $f,g\in\mathcal{C}_\alpha(\R^n)$ and $a,b>0$, then $a\cdot_\alpha f\star_\alpha b\cdot_\alpha  g\in\mathcal{C}_\alpha(\R^n)$ (see \cite{BCF-2014, Milman-Rotem-alpha}).

\begin{remark}\label{alpha-sum-properties}
Let $\alpha\in(-\infty, 0]$, $f,g\in\mathcal{C}_\alpha(\R^n)$ and $\lambda\in(0,1)$. By \cite[Proposition 10]{Rotem2013},  these operations coincide under convex combinations:
\[
\lambda\cdot_\alpha f\star_\alpha(1-\lambda)\cdot_\alpha g=\lambda\times_\alpha f\oplus_\alpha (1-\lambda)\times_\alpha g.
\]
Furthermore, if $f,g\in\mathcal{C}_\alpha(\R^n)$ are Borel measurable and $a,b>0$, then $a\times_\alpha f\oplus_\alpha b\times_\alpha g$ and $a\cdot_\alpha f\star_\alpha b\cdot_\alpha g$ are Lebesgue measurable \cite{BCF-2014, Rotem2013}.  
For more details, we refer the reader to \cite{BCF-2014, Milman-Rotem-alpha, Rotem2013}. 
\end{remark}

We will also use the following distributivity property of the $\alpha$-Asplund operations. The case $\alpha=0$ was shown by Hofst\"atter and Schuster in \cite[Lemma 2.3(ii)]{Hofstatter-Schuster}.

\begin{lemma}\label{lem:alpha-distributivity}
    Let $\alpha\in(-\infty,0]$ and $f\in\mathcal{C}_\alpha(\R^n)$. For all $a,b>0$, we have
    \[
a\cdot_\alpha f\star_\alpha b\cdot_\alpha f=(a+b)\cdot_\alpha f.
    \]
\end{lemma}

\begin{proof}
    Let $\psi=\base_\alpha f$. By definition,
    \[
\base_\alpha\left(a\cdot_\alpha f\star_\alpha b\cdot_\alpha f\right)(x)=\inf_{y+z=x}\left\{a\psi\left(\frac{y}{a}\right)+b\psi\left(\frac{z}{b}\right)\right\},\quad x\in\R^n.
    \]
    By the convexity of $\psi$, for any $y,z\in\R^n$ with $y+z=x$ we have
    \[
\frac{a}{a+b}\psi\left(\frac{y}{a}\right)+\frac{b}{a+b}\psi\left(\frac{z}{b}\right)\geq \psi\left(\frac{a(y/a)+b(z/b)}{a+b}\right)=\psi\left(\frac{x}{a+b}\right).
    \]
    Multiplying both sides by $a+b>0$, we obtain
    \[
    a\psi\left(\frac{y}{a}\right)+b\psi\left(\frac{z}{b}\right)\geq (a+b)\psi\left(\frac{x}{a+b}\right).
    \]
    Hence
    \[
\base_\alpha\left(a\cdot_\alpha f\star_\alpha b\cdot_\alpha f\right)(x)\geq (a+b)\psi\left(\frac{x}{a+b}\right).
    \]
    But equality is attained by taking $y=\frac{a}{a+b}x$ and $z=\frac{b}{a+b}x$, since then $y/a=z/b=x/(a+b)$. Therefore, for every $x\in\R^n$
    \[
\base_\alpha\left(a\cdot_\alpha f\star_\alpha b\cdot_\alpha f\right)(x)= (a+b)\psi\left(\frac{x}{a+b}\right)=\base_\alpha\left((a+b)\cdot_\alpha f\right)(x).
    \]
    This implies $a\cdot_\alpha f\star_\alpha b\cdot_\alpha f=(a+b)\cdot_\alpha f$.
\end{proof}

\begin{remark}
Let $K,L\in\mathcal{K}^n$. For every $\alpha\in(-\infty,0]$, we have $\mathbbm{1}_K\star_\alpha\mathbbm{1}_L=\mathbbm{1}_{K+L}$ and $\lambda\cdot_\alpha\mathbbm{1}_K=\mathbbm{1}_{\lambda K}$ (see \cite{Rotem2013}); hence by Remark \ref{alpha-sum-properties}, for all $\lambda\in(0,1)$ we have
\[
\lambda\times_\alpha\mathbbm{1}_K\oplus_\alpha(1-\lambda)\times_\alpha\mathbbm{1}_L = \lambda\cdot_\alpha\mathbbm{1}_K\star_\alpha(1-\lambda)\cdot_\alpha\mathbbm{1}_L=\mathbbm{1}_{\lambda K+(1-\lambda)L}.
\]
\end{remark}

\begin{remark}\label{linear-level-sets}
By \cite[p. 38]{RockafellarBook}, for all  $f,g\in\mathcal{C}_{-\infty}(\R^n)$ and all $a,b> 0$ we have 
\begin{equation}
\lev_{\geq t} (a\times_{-\infty} f\oplus_{-\infty} b\times_{-\infty} g) = a\lev_{\geq t} f+b\lev_{\geq t} g.
\end{equation}
\end{remark}

\subsection{The $\alpha$-support function and $\alpha$-mean width of an $\alpha$-concave function}

Fix $\alpha\in(-\infty,0]$ and let $f\in\mathcal{C}_\alpha(\R^n)$. Rotem \cite{Rotem2013} defined the  \emph{$\alpha$-support function} $h_f^{(\alpha)}$ of $f$ by  $h_f^{(\alpha)}=\mathcal{L}(\base_\alpha f)$. Note that  $h_f^{(\alpha)}$ is a convex function on $\R^n$. The $\alpha$-support function is linear in the sense that for any $f,g\in\mathcal{C}_\alpha(\R^n)$ and any $a,b>0$, we have 
\begin{equation}\label{linearity-alpha-supprt}
h^{(\alpha)}_{a\cdot_\alpha f \star_\alpha b\cdot_\alpha g}=ah_f^{(\alpha)}+bh_g^{(\alpha)}.
\end{equation}
In particular, for $f\in\LC_{\rm c}(\R^n)$ we simply denote $h_f=h_f^{(0)}=\mathcal{L}(-\log f)$.
Note also that for every $\alpha\in(-\infty,0]$ and every $K\in\mathcal{K}^n$, we have  $h_{\mathbbm{1}_K}^{(\alpha)}=h_K$. For more background on support functions of $\alpha$-concave functions, see \cite{Artstein-Milman2010, Rotem2012}. 

\begin{definition}\label{alpha-mw-def}
Let $\alpha\in[-\infty,0]$ and $f\in\mathcal{C}_\alpha(\R^n)$. The \emph{$\alpha$-mean width} $w_\alpha(f)$ of $f$ is defined by 
\begin{equation}\label{mw-def}
    w_\alpha(f) = \begin{cases}
    \int_{\R^n}h_f^{(\alpha)}(x)\left(1-\frac{\alpha|x|^2}{2}\right)^{\frac{1-\alpha}{\alpha}}\,dx, &\text{if } \alpha\in(-\infty,0)\\
    \frac{2}{n}\int_{\R^n}h_f(x)\,d\gamma_n(x), &\text{if }  \alpha=0\\
    \int_0^\infty w(\lev_{\geq t}f)\,dt, &\text{if }  \alpha=-\infty.
    \end{cases}
\end{equation}
Here $\gamma_n$ is the standard Gaussian probability measure on $\R^n$ given by $d\gamma_n(x)=(2\pi)^{-n/2}e^{-|x|^2/2}\,dx$. 
\end{definition}

This definition is due to Rotem \cite{Rotem2012,Rotem2013} for $\alpha\in(-\infty,0]$, and due to  Bobkov, Colesanti and Fragal\`a \cite{BCF-2014} for $\alpha=-\infty$. By \eqref{linearity-alpha-supprt}, the $\alpha$-mean width is linear with respect to the operations $\star_\alpha$ and $\cdot_\alpha$. More specifically, for all  $\alpha\in(-\infty,0]$, all   $f,g\in\mathcal{C}_\alpha(\R^n)$ and all $a,b>0$,
\begin{equation}\label{linearity}
w_\alpha(a\cdot_\alpha f\star_\alpha b\cdot_\alpha g) = aw_\alpha(f)+bw_\alpha(g).
\end{equation}
In the case $\alpha=-\infty$, by Remark \ref{linear-level-sets} we have
\begin{equation}\label{QC-linear}
w_{-\infty}(a\times_{-\infty} f\oplus_{-\infty} b\times_{-\infty} g) = aw_{-\infty}(f)+bw_{-\infty}(g).
\end{equation}
The $\alpha$-mean width is also invariant under orthogonal transformations. Note that $w(f)=w_{-\infty}(f)$.  

The $\alpha$-mean width is also monotone, as the next lemma shows.

\begin{lemma}\label{lem:alpha-mw-monotone}
    Let $\alpha\in[-\infty,0]$ and $f,g\in\mathcal{C}_\alpha(\R^n)$. If $f\leq g$, then $w_\alpha(f)\leq w_\alpha(g)$.
\end{lemma}

\begin{proof}
We break the proof into three cases: (i) $\alpha\in(-\infty,0)$; (ii) $\alpha=0$; and (iii) $\alpha=-\infty$.

\medskip

    \noindent\underline{\emph{Case 1: $\alpha\in(-\infty,0)$.}} For $\alpha\in(-\infty,0)$, we have $\base_\alpha f=(1-f^\alpha)/\alpha$ and $h_f^{(\alpha)}=\mathcal{L}(\base_\alpha f)$. If $f\leq g$ and $\alpha<0$, then $f^\alpha\geq g^\alpha$, so $1-f^\alpha\leq 1-g^\alpha$. Dividing both sides by the negative number $\alpha$ reverses the inequality again, and we get $\base_\alpha f\geq\base_\alpha g$. 
    By Lemma \ref{conjugate-properties}(ii), the Legendre--Fenchel transform is order-reversing, so
    \[
h_f^{(\alpha)}=\mathcal{L}(\base_\alpha f)\leq\mathcal{L}(\base_\alpha g)=h_g^{(\alpha)}.
    \]
    When $\alpha<0$, the weight $(1-\alpha|x|^2/2)^{(1-\alpha)/\alpha}$ is strictly positive for all $x\in\R^n$. Thus the previous inequality implies that 
    \[
w_\alpha(f)=\int_{\R^n}h_f^{(\alpha)}(x)\left(1-\frac{\alpha|x|^2}{2}\right)^{\frac{1-\alpha}{\alpha}}dx
\leq \int_{\R^n}h_g^{(\alpha)}(x)\left(1-\frac{\alpha|x|^2}{2}\right)^{\frac{1-\alpha}{\alpha}}dx=w_\alpha(g).
    \]

    \medskip

    \noindent\underline{\emph{Case 2: $\alpha=0$.}} In this case, the mapping $f\mapsto h_f$ on $\LC_{\rm c}(\R^n)$ is again order-preserving (see, e.g., \cite{Rotem2012}): If $f,g\in\LC_{\rm c}(\R^n)$ and $f\leq g$, then $-\log f\geq -\log g$, so
    \[
h_f=\mathcal{L}(-\log f)\leq\mathcal{L}(-\log g)=h_g
    \]
    by Lemma \ref{conjugate-properties}(ii). Thus,
    \[
w_0(f)=\frac{2}{n}\int_{\R^n}h_f(x)\,d\gamma_n(x)\leq\frac{2}{n}\int_{\R^n}h_g(x)\,d\gamma_n(x)=w_0(g).
    \]

     \medskip

    \noindent\underline{\emph{Case 3: $\alpha=-\infty$.}} Let $f,g\in\mathcal{C}_{-\infty}(\R^n)$. If $f\leq g$, then $\lev_{\geq t}f\subset\lev_{\geq t}g$ for every $t$. Since the classical mean width is monotone, this implies $w(\lev_{\geq t}f)\leq w(\lev_{\geq t}g)$ for every $t$. Therefore
    \[
w_{-\infty}(f)=\int_0^\infty w(\lev_{\geq t}f)\,dt\leq\int_0^\infty w(\lev_{\geq t}g)\,dt=w_{-\infty}(g).
    \]
\end{proof}

\begin{remark}
For $\alpha\in(-\infty,0]$, define the function $G_\alpha\in\mathcal{C}_\alpha(\R^n)$ by \[G_\alpha(x)=\left(1-\frac{\alpha|x|^2}{2}\right)^{1/\alpha},
\]
and set $G_0(x)=\lim_{\alpha\to 0}G_\alpha(x)=e^{-|x|^2/2}$. In other words, $\base_\alpha G_\alpha(x)=|x|^2/2$. The $\alpha$-mean width was defined in \cite[Theorem 12]{Rotem2013} as the directional derivative of the integral at the ``point'' $G_\alpha$ in the ``direction'' $f$:
\[
w_\alpha(f)=\lim_{\varepsilon\to 0^+}\frac{\int G_\alpha\star_\alpha[\varepsilon\cdot_\alpha f]-\int G_\alpha}{\varepsilon}.
\]
It was also proved in \cite[Theorem 13]{Rotem2013} that $w_\alpha(f)$ has the formula in Definition \ref{alpha-mw-def}.

In the definition of mean width given in \cite{BCF-2014}, one again takes a directional derivative  of the integral in the direction $f$, but this time at the point $\mathbbm{1}_{B_n}$. By the layer cake principle, this is the same as integrating the mean width of the superlevel sets. This definition can also be extended to $\alpha$-concave functions. The case of $\alpha=-\infty$ in Definition \ref{alpha-mw-def} is justified since both of these notions of mean width coincide in the limit as $\alpha\to-\infty$. 
We refer the reader to \cite{BCF-2014, Rotem2012, Rotem2013, Roysdon-Xing-Lp-functions} for more details.
\end{remark}

It is natural to ask for continuity properties of the $\alpha$-mean width. We give an answer in the following

\begin{lemma}\label{lem:walpha-lsc}
Let $\alpha\in(-\tfrac{2}{n-1},0]$. Then the $\alpha$-mean width $w_\alpha$ is lower semicontinuous with respect to the topology of hypo-convergence on $\mathcal{C}_\alpha(\R^n)$.
\end{lemma}

\begin{proof}
We first treat the case $\alpha<0$, and then the case $\alpha=0$.

\medskip
\noindent
\underline{\emph{Case 1: $\alpha<0$.}} 
Let $f_k,f\in \mathcal{C}_\alpha(\R^n), k\in\mathbb{N}$, and assume that $f_k$ hypo-converges to $f$ as $k\to\infty$.
Set $\psi_k:=\base_\alpha f_k$ and $\psi:=\base_\alpha f$. By Lemma~\ref{lem:base-alpha-convc}, we have $\psi_k,\psi\in\mathrm{Conv}_{\rm c}(\R^n)$. 
Since the Legendre--Fenchel transform preserves epi-convergence of proper lower semicontinuous convex
functions (see Lemma \ref{conjugate-properties}(vi)), it follows that
\[
h_k:=h_{f_k}^{(\alpha)}=\mathcal{L}\psi_k \stackrel{\epi}{\longrightarrow} \mathcal{L}\psi=h_f^{(\alpha)}=:h.
\]

Next, we prove that
\[
h_k(x)\longrightarrow h(x)\qquad\text{for Lebesgue-a.e. }x\in\R^n.
\]
Since $\psi\in\mathrm{Conv}_{\rm c}(\R^n)$, its conjugate $h=\mathcal{L}\psi$ is proper, lower semicontinuous,
convex, and $\operatorname{int}(\dom (h))\neq\varnothing$. By Lemma \ref{le:epi_conv_pointwise}, we have $h_k\to h$ uniformly on
every compact subset of $\R^n\setminus \partial\dom (h)$. In particular, if
$x\in \operatorname{int}(\dom (h))$, then choosing $r>0$ with
$B(x,r)\subset \operatorname{int}(\dom (h))$ we get $h_k(x)\to h(x)$. Likewise, if
$x\in \R^n\setminus \overline{\dom (h)}$, then choosing $r>0$ with
$B(x,r)\subset \R^n\setminus \overline{\dom (h)}$ we get
$h_k(x)\to+\infty=h(x)$. Hence
\[
h_k(x)\longrightarrow h(x)\quad\text{for all }x\in \R^n\setminus \partial\dom(h).
\]
Since $\dom(h)$ is convex and has nonempty interior, its boundary $\partial\dom(h)$ has Lebesgue measure zero. Therefore,
\[
h_k(x)\longrightarrow h(x)\quad\text{for Lebesgue-a.e. }x\in \R^n.
\]

We now construct a common affine lower bound. Since $\psi\in\mathrm{Conv}_{\rm c}(\R^n)$, we can choose
$y\in\R^n$ such that $\psi(y)<\infty$. Since $\psi=\epilim_{k\in\mathbb{N}}\psi_k$, there exists a sequence $y_k\to y$ such that $\psi_k(y_k)\to \psi(y)$. Hence there exist constants $R,M>0$ and $k_0\in\N$ such that for all $k\geq k_0$,
\[
|y_k|\leq R
\quad\text{and}\quad
\psi_k(y_k)\leq M.
\]
By the definition of the Legendre--Fenchel  transform, this implies that for every $x\in\R^n$ and every $k\geq k_0$,
\[
h_k(x)=\sup_{z\in\R^n}\bigl(\langle x,z\rangle-\psi_k(z)\bigr)
\geq
\langle x,y_k\rangle-\psi_k(y_k)
\geq
-R|x|-M.
\]
Thus
\[
h_k(x)+R|x|+M\geq 0
\qquad\text{for all }x\in\R^n,\; k\geq k_0.
\]

Let
\[
\rho_\alpha(x):=\left(1-\frac{\alpha|x|^2}{2}\right)^{\frac{1-\alpha}{\alpha}},
\quad x\in\R^n.
\]
By definition,
\[
w_\alpha(f_k)=\int_{\R^n} h_k(x)\rho_\alpha(x)\,dx
\quad\text{and}\quad
w_\alpha(f)=\int_{\R^n} h(x)\rho_\alpha(x)\,dx.
\]
Next we prove that if $\alpha\in(-2/(n-1),0)$, then the function $(1+|x|)\rho_\alpha(x)$ is integrable on $\R^n$. Set $\beta:=-1/\alpha>0$, so that $\rho_\alpha(x)=(1+|x|^2/(2\beta))^{-\beta-1}$. We want to show that the integral $\int_{\R^n}(1+|x|)\rho_\alpha(x)\,dx$ is finite. For $x\in B_n$, the integrand is continuous and bounded, so it is integrable on $B_n$. For $|x|\geq 1$, we have $1+|x|^2/(2\beta)\geq |x|^2/(2\beta)$. Hence
\[
\rho_\alpha(x)=\left(1+\frac{|x|^2}{2\beta}\right)^{-\beta-1}\leq\left(\frac{|x|^2}{2\beta}\right)^{-\beta-1}=(2\beta)^{\beta+1}|x|^{-2\beta-2}.
\]
Therefore, for $|x|\geq 1$,
\[
0\leq (1+|x|)\rho_\alpha(x)\leq 2|x|\rho_\alpha(x)\leq 2(2\beta)^{\beta+1}|x|^{-2\beta-1}.
\]
Thus it is enough to check that $|x|^{-2\beta-1}\in L^1(\R^n\setminus B_n)$. Using polar coordinates, we get
\[
\int_{\R^n\setminus B_n}|x|^{-2\beta-1}\,dx=\vol_{n-1}(\partial B_n)\int_1^\infty r^{-2\beta-1}r^{n-1}\,dr=\vol_{n-1}(\partial B_n)\int_1^\infty r^{n-2\beta-2}\,dr.
\]
This converges if and only if $n-2\beta-2<-1$, i.e., if and only if 
\[
\beta>\frac{n-1}{2}.
\]
Now since $\beta=-1/\alpha$, this means that the integral converges (and $(1+|x|)\rho_\alpha(x)\in L^1(\R^n)$) if and only if  $\alpha\in(-2/(n-1),0)$. This implies that for any  $\alpha\in(-2/(n-1),0)$, 
\[
g(x):=R|x|+M \in L^1(\rho_\alpha(x)\,dx).
\]

Now since $h_k+g\geq 0$ for all $k\ge k_0$ and $h_k\to h$ almost everywhere, by Fatou's lemma
\[
\int_{\R^n} (h(x)+g(x))\rho_\alpha(x)\,dx
\leq
\liminf_{k\to\infty}\int_{\R^n} (h_k(x)+g(x))\rho_\alpha(x)\,dx.
\]
Subtracting the finite quantity $\int_{\R^n}g\rho_\alpha\,dx$ from both sides of the previous inequality and using the definition of the $\alpha$-mean width, we obtain
\[
w_\alpha(f)=\int_{\R^n} h(x)\rho_\alpha(x)\,dx
\leq
\liminf_{k\to\infty}\int_{\R^n} h_k(x)\rho_\alpha(x)\,dx=\liminf_{k\to\infty} w_\alpha(f_k).
\]
This shows that $w_\alpha$ is lower semicontinuous for $\alpha\in(-2/(n-1),0)$.

\medskip
\noindent
\underline{\emph{Case 2: $\alpha=0$.}} 
Let $f_k,f\in \LC_{\rm c}(\R^n)$, $k\in\mathbb{N}$, and assume that the sequence $f_k$ hypo-converges to $f$ as $k\to\infty$. Set $\psi_k:=-\log f_k$ and $\psi:=-\log f$. Then $\psi_k,\psi\in\mathrm{Conv}_{\rm c}(\R^n)$ and the sequence $\psi_k$ epi-converges to $\psi$. 

Define $h_k:=h_{f_k}=\mathcal{L}\psi_k$ and $h:=h_f=\mathcal{L}\psi$. Applying Lemma \ref{conjugate-properties}(vi) again, we have $h=\epilim h_k$. Following along the same lines as before, this implies
\[
h_k(x)\longrightarrow h(x)\qquad\text{for Lebesgue-a.e. }x\in\R^n.
\]

Now choose $y\in\R^n$ such that $\psi(y)<\infty$, and choose points $y_k\to y$ with
$\psi_k(y_k)\to\psi(y)$. Then there exist $R,M>0$ and $k_0\in\N$ such that
\[
h_k(x)\geq -R|x|-M
\qquad\text{for all }x\in\R^n,\; k\geq k_0.
\]
Since the Gaussian measure has finite first moment, the function $R|x|+M$ is integrable with
respect to $\gamma_n$. Using the definition of $w_0(f)$ and Fatou's lemma, we get
\[
w_0(f)=\frac2n\int_{\R^n} h(x)\,d\gamma_n(x)
\leq
\liminf_{k\to\infty}\frac2n\int_{\R^n} h_k(x)\,d\gamma_n(x)=\liminf_{k\to\infty} w_0(f_k).
\]
Therefore, $w_0$ is lower semicontinuous.
\end{proof}

\section{Minkowski symmetrizations of functions}\label{Minkowski-symm-section}

In this section, we present the main definition of the paper, which is a parameterized family of Minkowski symmetrals. We also study some of the properties of these symmetrizations.


\subsection{The $\alpha$-Minkowski symmetral of an $\alpha$-concave function}

The Minkowski symmetral of a set depends on its reflection about the hyperplane of symmetrization. Similarly, any definition of a Minkowski symmetral of a function should  also depend on a reflection about a hyperplane. Recall that for a function $f:\R^n\to\R$ and $u\in\Sp$, the \emph{reflection} $R_u f$ of $f$ with respect to the hyperplane $H=u^\perp\in\Gr(n,n-1)$ is the function on $\R^n$ defined by
\[
R_u f(x):=f(R_u(x)),
\]
where $R_u(x)=x-2\langle x,u\rangle u$ is the reflection of $x$ about $H$. Note that $R_u^2=\mathrm{Id}$ and $R_u^{-1}=R_u$.

Motivated by the discussions in the introduction and Section \ref{preliminaries-section}, we formulate the following

\begin{definition}\label{mainDef-new}
Let $\alpha\in(-\infty,0)$, $f\in\mathcal{C}_\alpha(\R^n)$ and $u\in\Sp$ be given. The $\alpha$-\emph{Minkowski symmetral} $\tau_u^\alpha f$ of $f$ with respect to the hyperplane $H=u^\perp\in\Gr(n,n-1)$ is defined by
\begin{equation}\label{mainEqn}
    \tau_u^\alpha f:=\frac{1}{2}\cdot_\alpha f\star_\alpha\frac{1}{2}\cdot_\alpha R_u f.
\end{equation}
\end{definition}
\noindent In the special case $\alpha= 0$,  we define 
\[
\tau_u f:=\tau^0_u f=\frac{1}{2}\cdot f\star\frac{1}{2}\cdot R_u f,
\]
and for $\alpha=-\infty$, we set
\begin{align*}
\tau_u^{-\infty} f&:=\frac{1}{2}\times_{-\infty}f\oplus_{-\infty}\frac{1}{2}\times_{-\infty}R_u f.
\end{align*}

We denote an ordered sequence of iterated $\alpha$-Minkowski symmetrizations by  $\bigcirc_{i=1}^k\tau_{u_i}^\alpha f:=\tau_{u_k}^\alpha\circ\cdots\circ\tau_{u_1}^\alpha f$. Note that the order is important here as Minkowski symmetrizations do not commute in general. We also use the notation $\tau_H^\alpha f$, $R_H$, etc., to denote symmetrizations,  reflections, etc., about the hyperplane $H=u^\perp$. For $\alpha\in(-\infty,0)$, the binary operation underlying $\tau_u^\alpha f$ may be viewed as a special case of the  
$(p,\mu)$-convolution of Salani \cite{Salani-2015} with $p=\alpha$, $\mu=1/2$, and the second input given by the reflection $R_u f$.

For Definition \ref{mainDef-new} to make sense, we need to show that the $\alpha$-Minkowski symmetral of an $\alpha$-concave function is $\alpha$-concave. This will be shown in the following

\begin{lemma}\label{closure-lemma}
Let $f$ be a nonnegative function on $\R^n$, $\alpha\in[-\infty,0]$ and $u\in\Sp$. 
\begin{itemize}
\item[(i)] If $f\in\mathcal{C}_\alpha(\R^n)$, then $R_u f\in\mathcal{C}_\alpha(\R^n)$.

\item[(ii)] If $f\in\mathcal{C}_\alpha(\R^n)$, then $\tau_u^\alpha f\in\mathcal{C}_\alpha(\R^n)$.
\end{itemize}
\end{lemma}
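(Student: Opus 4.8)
The plan is to reduce both claims to known structural facts. For part (i), I would argue directly from the definition of $\mathcal{C}_\alpha(\R^n)$: since $R_u$ is a linear isometry of $\R^n$ and in particular an invertible affine map, we have $\supp(R_u f)=R_u(\supp f)$, which is convex of full dimension whenever $\supp f$ is; upper semicontinuity and properness are preserved under composition with a homeomorphism; and $\lim_{\|x\|\to\infty}R_u f(x)=\lim_{\|y\|\to\infty}f(y)=0$ because $\|R_u x\|=\|x\|$. The $\alpha$-concavity inequality transfers because $R_u$ commutes with convex combinations: $R_u(\lambda x+(1-\lambda)y)=\lambda R_u(x)+(1-\lambda)R_u(y)$, so $R_u f(\lambda x+(1-\lambda)y)=f(\lambda R_u x+(1-\lambda)R_u y)\ge M_\alpha^{(\lambda,1-\lambda)}(f(R_u x),f(R_u y))=M_\alpha^{(\lambda,1-\lambda)}(R_u f(x),R_u f(y))$. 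This handles all $\alpha\in[-\infty,+\infty]$ uniformly.

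For part (ii), the cleanest route is to invoke the already-recorded fact that the operations $\star_\alpha$, $\cdot_\alpha$ (and $\oplus_\alpha$, $\times_\alpha$) map $\mathcal{C}_\alpha(\R^n)$ into itself. For $\alpha\in\R\setminus\{0\}$ this is exactly Definition \ref{alpha-Asplund-sum-def} together with the remark that $\base_\alpha$ establishes a correspondence with ${\rm Conv}(\R^n)$: if $f,g\in\mathcal{C}_\alpha(\R^n)$ then $\base_\alpha f,\base_\alpha g$ are convex and lower semicontinuous, the infimal convolution $(\base_\alpha f)\square(\base_\alpha g)$ and the epi-multiplication $\tfrac12\sq\base_\alpha f$ remain convex (and, given the coercivity/finiteness coming from $\lim_{\|x\|\to\infty}f=0$, proper and lower semicontinuous), and hence $\tau_u^\alpha f=(1-\alpha\,[\,(\tfrac12\sq\base_\alpha f)\square(\tfrac12\sq\base_\alpha R_u f)\,])_+^{1/\alpha}$ lies in $\mathcal{C}_\alpha(\R^n)$; combine this with part (i). For $\alpha=0$ the statement is the classical fact that $\LC_{\rm c}(\R^n)$ (equivalently $\mathcal{C}_0(\R^n)$) is closed under the Asplund sum and homothety, quoted earlier from \cite{Hofstatter-Schuster}. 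For $\alpha=\pm\infty$ one uses Remark \ref{alpha-sum-properties}/Remark \ref{linear-level-sets}, noting that $\tfrac12\times_{\pm\infty}f\oplus_{\pm\infty}\tfrac12\times_{\pm\infty}R_u f$ has superlevel sets $\tfrac12\lev_{\geq t}f+\tfrac12\lev_{\geq t}R_u f$ (respectively, an analogous $\max$/intersection description), each of which is convex, so the symmetral is quasiconcave; the remaining regularity conditions are inherited from $f$ and $R_u f$ as in part (i).

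The one point requiring genuine care — and which I expect to be the main obstacle — is verifying that the hypotheses defining $\mathcal{C}_\alpha(\R^n)$ (properness, $\dim(\supp)=n$, the vanishing-at-infinity condition, and for $\alpha\neq 0$ the implicit requirement that $\base_\alpha(\tau_u^\alpha f)$ be proper so that $\star_\alpha$ is well defined) are actually preserved, rather than just the $\alpha$-concavity inequality. The support issue is fine: $\supp(\tau_u^\alpha f)=\tfrac12\supp(f)+\tfrac12 R_u(\supp f)$ is a Minkowski average of two full-dimensional convex sets, hence full-dimensional. Vanishing at infinity and properness follow because $\base_\alpha f$ and $\base_\alpha R_u f$ are coercive (again from $\lim_{\|x\|\to\infty}f=0$), so their infimal convolution after epi-multiplication is still coercive and proper, whence $\tau_u^\alpha f\to 0$ at infinity and is not identically zero. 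I would also remark that measurability is not an issue here since $f$ and $R_u f$ are upper semicontinuous, hence Borel, and Remark \ref{alpha-sum-properties} guarantees the symmetral is Lebesgue measurable; but in fact upper semicontinuity of $\tau_u^\alpha f$ comes for free from lower semicontinuity of its base function.
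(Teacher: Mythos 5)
Your proof is correct and follows essentially the same approach as the paper: for part (i) you verify directly that $R_u$ commutes with convex combinations and therefore preserves every defining property of $\mathcal{C}_\alpha(\R^n)$, and for part (ii) you reduce to the known closure of $\mathcal{C}_\alpha(\R^n)$ under $\star_\alpha$ and $\cdot_\alpha$, exactly as the paper does by citing the relevant references. The only cosmetic difference is that in (i) you treat all $\alpha\in[-\infty,+\infty]$ uniformly via the $M_\alpha$ inequality, whereas the paper routes $\alpha\in\R\setminus\{0\}$ through the base function and handles $\alpha=0,\pm\infty$ separately — but the underlying ingredient (linearity of $R_u$) is the same.
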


To prove Lemma \ref{closure-lemma}, we will need the following basic properties of reflections.

\begin{lemma}\label{reflection-lemma}
Let $f,g:\R^n\to\R$ be any functions and let $u\in\Sp$.
\begin{itemize}
\item[(i)] (Involution property) $R_u (R_u f)=f$.

\item[(ii)] (Monotonicity) $f\leq g$ if and only if $R_u f\leq R_u g$.

\item[(iii)] (Linearity) For all $x,y\in\R^n$ and every $\lambda\in[0,1]$, we have
\begin{equation}\label{reflection-linear}
R_u(\lambda x+(1-\lambda) y)=\lambda R_u(x)+(1-\lambda)R_u(y).
\end{equation}

\item[(iv)] (Superlevel sets property) For every $t\in\R$, we have $\lev_{\geq t}R_u f=R_u(\lev_{\geq t}f)$.

\item[(v)] (Graphical properties) Let $\widetilde{H}=H\times \R$. Then $\epi(R_H f)=R_{\widetilde{H}}\epi(f)$ and $\hyp(R_H f)=R_{\widetilde{H}}\hyp(f)$.
\end{itemize}
\end{lemma}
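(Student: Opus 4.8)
The plan is to reduce all five statements to the single structural observation that $R_u\colon\R^n\to\R^n$, $R_u(x)=x-2\langle x,u\rangle u$, is a linear map satisfying $R_u\circ R_u=\mathrm{id}_{\R^n}$, together with the fact that $R_uf$ is by definition the composition $f\circ R_u$. The involution identity is a one-line check, $R_u(R_u(x))=R_u(x)-2\langle R_u(x),u\rangle u=x-2\langle x,u\rangle u-2(\langle x,u\rangle-2\langle x,u\rangle)u=x$, and linearity of $R_u$ is immediate from linearity of $x\mapsto\langle x,u\rangle$. Granting these, I would dispatch (i) by writing $R_u(R_uf)(x)=f(R_u(R_u(x)))=f(x)$; (ii) by evaluating the inequality $f\le g$ at the point $R_u(x)$; and (iii) by expanding $R_u(\lambda x+(1-\lambda)y)$ and distributing, which in fact yields full linearity of $R_u$, not merely affinity along segments.

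For (iv) I would use that $R_u$ is a bijection with $R_u^{-1}=R_u$, so that $x\in\lev_{\geq t}R_uf\iff f(R_u(x))\ge t\iff R_u(x)\in\lev_{\geq t}f\iff x\in R_u(\lev_{\geq t}f)$. For (v), the key preliminary step is to identify the ambient reflection about $\widetilde H=H\times\R$: since $\widetilde H$ is a hyperplane in $\R^{n+1}$ with unit normal $(u,0)$, one has $R_{\widetilde H}(x,t)=(x,t)-2\langle(x,t),(u,0)\rangle(u,0)=(R_u(x),t)$, so $R_{\widetilde H}$ acts as $R_u$ on the horizontal factor, fixes the vertical coordinate, and is again an involution. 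A direct chase of definitions then gives $(x,t)\in\epi(R_Hf)\iff t\ge f(R_u(x))\iff(R_u(x),t)\in\epi(f)\iff R_{\widetilde H}(x,t)\in\epi(f)\iff(x,t)\in R_{\widetilde H}(\epi(f))$, and the argument for $\hyp$ is identical with the inequality reversed.

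I do not anticipate any genuine obstacle here: the lemma is a bundle of bookkeeping identities, and the only place that warrants a moment's attention is the coordinate identification in (v), namely that reflecting $\R^{n+1}$ across $H\times\R$ amounts to reflecting the base point across $H$ while leaving the height coordinate untouched. Recording these facts at this stage is purely for convenience, so that the proof of Lemma \ref{closure-lemma}, as well as the later symmetry, monotonicity, and invariance properties of $\tau_u^\alpha$, can invoke them without repetition.
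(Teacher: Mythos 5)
Your proof is correct and follows essentially the same route as the paper: each part reduces to the fact that $R_u$ is a linear involution on $\R^n$ and $R_uf=f\circ R_u$. The one point where you are slightly more careful is (v), where you explicitly verify that $R_{\widetilde H}(x,t)=(R_u(x),t)$ before the definition chase (and, incidentally, you state the epigraph condition with the inequality in the correct direction, $t\ge f(R_u(x))$, whereas the paper's displayed computation has it reversed).
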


\begin{proof}
(i) For every $x\in\R^n$ and every $u\in\mathbb{S}^{n-1}$, 
\[
(R_u(R_u f))(x)=(R_u f)(R_u x)=f(R_u(R_u x))=f(x).
\]

(ii) If $f\leq g$, then for every $x\in\R^n$ we have
\[
R_u f(x)=f(R_u(x))\leq g(R_u(x))=R_u g(x).
\]
The other direction now follows by replacing $f$ and $g$ by $R_u f$ and $R_u g$, respectively, and then using (i).

(iii) Let $x,y\in\R^n$ and $\lambda\in[0,1]$. By definition, 
\begin{align}
R_u(\lambda x+(1-\lambda) y) &= \lambda x+(1-\lambda)y-2\langle \lambda x+(1-\lambda)y,u\rangle u \nonumber\\
&=\lambda(x-2\langle x,u\rangle u)+(1-\lambda)(y-2\langle y,u\rangle u) \nonumber\\
&=\lambda R_u(x)+(1-\lambda)R_u(y).
\end{align}

(iv) For every $t\in\R$, from (i) we get 
\begin{align*}
\lev_{\geq t}R_u f&=\{x\in\R^n:\, R_u f(x)\geq t\}=\{x\in\R^n: f(R_u(x))\geq t\}\\
&=\{R_u(x)\in\R^n:\, f(x)\geq t\}=R_u\lev_{\geq t}f.
\end{align*}

(v) Applying (i) we get
\begin{equation}\label{reflect-epigraph}
\begin{aligned}
    \epi(R_H f)
&=\{(x,t)\in\R^n\times\R:\ t\ge R_H f(x)\} \\
&=\{(x,t)\in\R^n\times\R:\ t\ge f(R_Hx)\}  \\
&=\{(R_Hx,t)\in\R^n\times\R:\ t\ge f(x)\}
=R_{\widetilde H}\epi(f).
\end{aligned}
\end{equation}
The  identity for hypographs is handled in the same way.
\end{proof}

\begin{proof}[Proof of Lemma \ref{closure-lemma}]

 (i) Let $\alpha\in(-\infty,0]$ and $f\in\mathcal{C}_\alpha(\R^n)$. Then by Lemma \ref{lem:base-alpha-convc}, we have $\psi:=\base_\alpha f\in\mathrm{Conv}_{\rm c}(\R^n)$. By the linearity \eqref{reflection-linear} of $R_u(x)$ and the convexity of $\psi$, for all $x,y\in\R^n$ and all  $\lambda\in[0,1]$, 
\begin{align*}
    R_u\psi(\lambda x+(1-\lambda)y)
    &=\psi(R_u(\lambda x+(1-\lambda)y))\\
    &=\psi(\lambda R_u(x)+(1-\lambda)R_u(y))\\
    &\leq \lambda\psi(R_u(x))+(1-\lambda)\psi(R_u(y))\\
    &=\lambda R_u\psi(x)+(1-\lambda)R_u\psi(y).
\end{align*}
Hence, $R_u\psi:\R^n\to(-\infty,+\infty]$ is a convex function. Reflections also preserve the lower semicontinuity, properness and coercivity of $\psi$, so $R_u\psi\in\mathrm{Conv}_{\rm c}(\R^n)$. 

Suppose first that $\alpha\in(-\infty,0)$. Using the identity
\[
R_u f(x)=f(R_u(x))=(1-\alpha\psi(R_u(x)))^{1/\alpha}=(1-\alpha R_u\psi(x))^{1/\alpha},
\]
which holds for every $x\in\R^n$, along with \eqref{alpha-concave-rep} and the fact that $R_u\psi\in\mathrm{Conv}_{\rm c}(\R^n)$, we conclude that $R_u f$ is $\alpha$-concave. Similarly, for $\alpha=0$, we have
\[
R_u f(x)=f(R_u(x))=e^{-\psi(R_u(x))}=e^{-R_u\psi(x)},
\]
for every $x\in\R^n$, which implies $R_u f\in\LC_{\rm c}(\R^n)$ since $R_u\psi\in\mathrm{Conv}_{\rm c}(\R^n)$.

For $\alpha=-\infty$, note that if $f:\R^n\to\R$ is quasiconcave, then  by \eqref{reflection-linear} and the definition of quasiconcavity, for every $x,y\in\R^n$ and every $\lambda\in[0,1]$ we have 
\begin{align*}
R_u f(\lambda x+(1-\lambda)y)&=f(R_u(\lambda x+(1-\lambda)y))=f(\lambda R_u(x)+(1-\lambda)R_u(y))\\
&\geq \min\{f(R_u(x)), f(R_u(y))\}
=\min\{R_u f(x), R_u f(y)\}.
\end{align*}
Therefore, $R_u f$ is also quasiconcave.

Since $f\in\mathcal{C}_\alpha(\R^n)$ is upper semicontinuous, its superlevel sets are closed (see \cite[Theorem 7.1]{RockafellarBook}). Thus, by Lemma \ref{reflection-lemma}(iv), the superlevel sets $\lev_{\geq t}R_u f=R_u(\lev_{\geq t}f)$ are also  closed since reflections preserve closedness. Hence $R_u f$ is upper semicontinuous. Similarly, $R_u f$ is proper and $\lim_{|x|\to\infty}R_u f(x)=0$. Altogether, this proves  that  $R_u f\in\mathcal{C}_\alpha(\R^n)$. 

(ii) Recall that if $f,g\in\mathcal{C}_\alpha(\R^n)$ and $a,b>0$, then $a\cdot_\alpha f\star_\alpha b\cdot_\alpha  g\in\mathcal{C}_\alpha(\R^n)$. Applying this with $a=b=1/2$ and $g=R_u f$ and using (i), it  follows that  if $f\in\mathcal{C}_\alpha(\R^n)$, then $\tau_u^\alpha f\in\mathcal{C}_\alpha(\R^n)$. 
\end{proof}

\begin{remark}\label{rmk:indicator-symm}
Since $R_u \mathbbm{1}_K=\mathbbm{1}_{R_u K}$,  for every  $\alpha\in(-\infty,0]$ we have 
\[
\tau_u^\alpha\mathbbm{1}_K = \frac{1}{2}\cdot_\alpha\mathbbm{1}_K\star_\alpha\frac{1}{2}\cdot_\alpha\mathbbm{1}_{R_u K}=\mathbbm{1}_{\frac{1}{2}K+\frac{1}{2}R_u K}=\mathbbm{1}_{\tau_u K}.
\]
Moreover, for $\alpha=-\infty$ we also have $\tau_u^{-\infty}\mathbbm{1}_K=\mathbbm{1}_{\tau_u K}$. In this way, Definition \ref{mainDef-new} extends the classical definition of a Minkowski symmetrization  from the class of convex bodies $\mathcal{K}^n$ to the class of $\alpha$-concave functions $\mathcal{C}_\alpha(\R^n)$ when $\alpha\in[-\infty,0]$.
\end{remark}

\begin{remark}
For $f\in\LC_{\rm c}(\R^n)$, Colesanti \cite{Colesanti-RS-2006} defined the closely related (Asplund) \emph{difference function} $\Delta_*f=\frac{1}{2}\cdot f\star\frac{1}{2}\cdot\overline{f}$, where $\overline{f}(x)=f(-x)$, and proved a functional analogue of the Rogers--Shephard inequality.
\end{remark}

\subsection{Properties of the $\alpha$-Minkowski symmetral}\label{properties-Minkowski-symm}

In analogy with the classical Minkowski symmetral of a convex body, the $\alpha$-Minkowski symmetral of an $\alpha$-concave function satisfies the following basic properties.

\begin{proposition}\label{mainProp}
Fix $\alpha\in[-\infty,0]$ and let  $f,g\in\mathcal{C}_\alpha(\R^n)$. Let   $u\in\mathbb{S}^{n-1}$ and $H=u^\perp$. 
\begin{itemize}
    \item[(i)] ($\alpha$-Mean width preserving) We have $w_\alpha(\tau_u^\alpha f)=w_\alpha(f)$.

     \item[(ii)] (Monotonicity) If  $f\leq g$, then $\tau^\alpha_u f\leq \tau^\alpha_u g$. 

    \item[(iii)] (Symmetry with respect to $H=u^\perp$) The $\alpha$-Minkowski symmetral $\tau_u^\alpha f$ is symmetric with respect to $u^\perp$, that is, $R_u(\tau_u^\alpha f)=\tau_u^\alpha f$. 

    \item[(iv)] (Idempotence) We have $\tau_u^\alpha(\tau_u^\alpha f)=\tau_u^\alpha f$.
    
    \item[(v)] (Invariance on $H$-symmetric functions) 
    If  $f_H\in\mathcal{C}_\alpha(\R^n)$ is $H$-symmetric, that is $R_H f_H=f_H$, then $\tau_H^\alpha f_H=f_H$.

    \item[(vi)] (Projection invariance) We have $\proj_H\tau_u^\alpha f=\proj_H f$. 

  \item[(vii)] (Total mass nondecreasing) If $\alpha\in[-1/n,0]$ and $f\in\mathcal{C}_\alpha(\R^n)$, then  $J(\tau_u^\alpha f) \geq J(f)$. In the case $\alpha=0$, if $J(f)>0$, then equality holds if and only if $R_u f$ is a translate of $f$, i.e., if and only if there exists $z\in\R^n$ such that $R_u f(x)=f(x-z)$ for all $x\in\R^n$.

    \item[(viii)] (Superlevel sets property) For every $t>0$, we have $\lev_{\geq t}\tau_u^{-\infty}f=\tau_u\lev_{\geq t}f$.
\end{itemize}
\end{proposition}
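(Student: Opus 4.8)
The plan is to prove each property by reducing it to the corresponding statement about base functions (via the Legendre–Fenchel conjugate and the operations $\square$, $\sq$), with the exception of the last two items, which are genuinely different in character. Throughout, write $\psi=\base_\alpha f$ and $\varphi=\base_\alpha R_u f$; by Lemma~\ref{reflection-lemma}(iii) and the proof of Lemma~\ref{closure-lemma}(i) we have $\varphi=R_u\psi$, and by Definition~\ref{alpha-Asplund-sum-def} together with Lemma~\ref{conjugate-properties}(iii),(iv) we get $\base_\alpha\tau_u^\alpha f=\tfrac12\sq\psi\,\square\,\tfrac12\sq R_u\psi$ and $h^{(\alpha)}_{\tau_u^\alpha f}=\tfrac12 h_f^{(\alpha)}+\tfrac12\,\mathcal L(R_u\psi)$.

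\emph{Items (i)--(iii).} For (i), when $\alpha\in\R\setminus\{0\}$ use the linearity \eqref{linearity} of $w_\alpha$ with $a=b=\tfrac12$, reducing to $w_\alpha(R_u f)=w_\alpha(f)$; this follows because $\mathcal L(R_u\psi)(x)=\mathcal L\psi(R_u(x))$ (a direct computation from the definition of $\mathcal L$ using that $R_u$ is a linear isometry fixing $H$), so $h^{(\alpha)}_{R_u f}=R_u h_f^{(\alpha)}$, and the integral in \eqref{mw-def} is invariant under the orthogonal map $R_u$ since the weight $\bigl(1-\tfrac{\alpha\|x\|^2}{2}\bigr)_+^{(1-\alpha)/\alpha}$ depends only on $\|x\|$; the case $\alpha=0$ is analogous using that $\gamma_n$ is rotation invariant, and $\alpha=-\infty$ follows from \eqref{QC-linear} and rotation invariance of the classical mean width together with property (viii). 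For (ii), monotonicity of $\cdot_\alpha$ and $\star_\alpha$ in each argument (which holds because $\square$ and $\sq$ are monotone, equivalently by Lemma~\ref{conjugate-properties}(ii)) combined with Lemma~\ref{reflection-lemma}(ii) gives $\tau_u^\alpha f\leq\tau_u^\alpha g$ from $f\leq g$. For (iii), apply $R_u$ to \eqref{mainEqn}: since $R_u$ commutes with $\cdot_\alpha$ and distributes over $\star_\alpha$ (because reflection about $\widetilde H$ is linear and so commutes with Minkowski addition of epigraphs, cf.\ Lemma~\ref{reflection-lemma}(v)), and since $R_u(R_u f)=f$ by Lemma~\ref{reflection-lemma}(i), we get $R_u\tau_u^\alpha f=\tfrac12\cdot_\alpha R_u f\star_\alpha\tfrac12\cdot_\alpha f=\tau_u^\alpha f$ by commutativity of $\star_\alpha$.

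\emph{Items (iv)--(vi).} Item (v) is the special case of (iv) where $f=f_H$ is already $H$-symmetric, combined with the identity $f\star_\alpha f=2\cdot_\alpha f$ (Remark~\ref{alpha-sum-properties}): indeed $\tau_H^\alpha f_H=\tfrac12\cdot_\alpha f_H\star_\alpha\tfrac12\cdot_\alpha f_H=1\cdot_\alpha f_H=f_H$. For (iv), apply (iii) to see $\tau_u^\alpha f$ is $H$-symmetric, then invoke (v). For (vi), pass to base functions: $\base_\alpha\proj_H f=\proj_{\widetilde H}\psi$ (the inf-projection of the convex function $\psi$ onto $\widetilde H$, using $\lev_{\leq t}\proj_H f=\proj_H\lev_{\geq t}f$ and \eqref{alpha-concave-rep}), and one checks $\proj_{\widetilde H}(\tfrac12\sq\psi\,\square\,\tfrac12\sq R_u\psi)=\tfrac12\sq\proj_{\widetilde H}\psi\,\square\,\tfrac12\sq\proj_{\widetilde H}R_u\psi=\tfrac12\sq\proj_{\widetilde H}\psi\,\square\,\tfrac12\sq\proj_{\widetilde H}\psi=\proj_{\widetilde H}\psi$, where the middle equality uses that $\proj_{\widetilde H}$ is unaffected by reflection in the $u$-direction (the projection kills exactly the coordinate along $u$) and the last uses $\tfrac12\sq\chi\,\square\,\tfrac12\sq\chi=\chi$. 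Equivalently one can argue at the level of level sets via the classical identity $\proj_H\tau_u K=\proj_H K$.

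\emph{Items (vii) and (viii).} For (viii), specialize Remark~\ref{linear-level-sets}: with $a=b=\tfrac12$ and $g=R_u f$, $\lev_{\geq t}\tau_u^{-\infty}f=\tfrac12\lev_{\geq t}f+\tfrac12\lev_{\geq t}R_u f=\tfrac12\lev_{\geq t}f+\tfrac12 R_u\lev_{\geq t}f=\tau_u\lev_{\geq t}f$, using Lemma~\ref{reflection-lemma}(iv). For (vii), the case $\alpha=0$ is immediate from the Prékopa–Leindler inequality \eqref{PL-ineq} with $\lambda=\tfrac12$, $g=R_u f$: $J(\tau_u f)=J(\tfrac12\cdot f\star\tfrac12\cdot R_u f)\geq J(f)^{1/2}J(R_u f)^{1/2}=J(f)$ since $J(R_u f)=J(f)$ by the change of variables $x\mapsto R_u(x)$, with equality iff $f$ and $R_u f$ are translates. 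The range $\alpha\geq -1/n$ is where the main obstacle lies: one needs the Borell–Brascamp–Lieb inequality (the $\alpha$-concave analogue of Prékopa–Leindler, valid precisely for $\alpha\ge -1/n$ in dimension $n$) applied to $\tfrac12\cdot_\alpha f\star_\alpha\tfrac12\cdot_\alpha R_u f$, again using $J(R_u f)=J(f)$; I expect this to be the step requiring the most care, since one must cite the correct form of Borell–Brascamp–Lieb adapted to the operations $\star_\alpha,\cdot_\alpha$ (as recorded in the references \cite{BCF-2014,Milman-Rotem-alpha,Rotem2013}) and verify the dimensional constraint is exactly $\alpha\ge -1/n$.
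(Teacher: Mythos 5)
Your proposal is correct and follows essentially the same strategy as the paper: linearity and rotation invariance of the $\alpha$-mean width for (i), direct algebraic manipulation of $\cdot_\alpha,\star_\alpha$ for (ii)--(v), the projection-distributes-over-$\star_\alpha$ identity for (vi), Pr\'ekopa--Leindler and its $\alpha$-concave (Borell--Brascamp--Lieb/Rotem) extension for (vii), and the level-set linearity from Remark~\ref{linear-level-sets} for (viii). The only minor routing differences are that you pass to Legendre conjugates for (ii) and to base functions for (vi), whereas the paper argues directly on $f$ (citing a projection identity from the literature for (vi)); both are sound.
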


\begin{proof}
(i) We modify the arguments of Rotem in  \cite[Proposition 3.1]{Rotem2012}. Given $\alpha\in(-\infty,0)$, let $f\in\mathcal{C}_\alpha(\R^n)$.  Then $f(x)=(1-\alpha\base_\alpha f(x))^{1/\alpha}$.  For every  $T\in{\rm GL}(n)$, we have $\base_\alpha(f\circ T)=\frac{1-(f\circ T)^\alpha}{\alpha}$ and hence for all $x\in\R^n$,
\begin{align*}
    h^{(\alpha)}_{f\circ T}(x) &=\sup_{y\in\R^n}\big\{\langle x,y\rangle-\base_\alpha(f\circ T)(y)\big\}
    =\sup_{z\in\R^n}\left\{\left\langle x,T^{-1}(z)\right\rangle-(\base_\alpha f)(z)\right\}\\
    &=\sup_{z\in\R^n}\left\{\left\langle (T^{-1})^*(x),z\right\rangle-(\base_\alpha f)(z)\right\}
    =h_f^{(\alpha)}((T^{-1})^*(x)).
\end{align*}
The reflection operator is $R_u=I_n-2uu^T\in\mathrm{O}(n)$, where $I_n$ is the $n\times n$ identity matrix, and it  satisfies $|\det R_u|=1$. Hence $(R_u^{-1})^*=R_u$, and thus $h^{(\alpha)}_{f\circ R_u}(x)=h_f^{(\alpha)}(R_u(x))$ for every $x\in\R^n$. Since  $w_\alpha(R_u f)=w_\alpha(f)$, we have $w_\alpha(\tau_u^\alpha f)=w_\alpha(f)$ by the linearity \eqref{linearity} of the $\alpha$-mean width. In the case $\alpha=0$, the same proof carries over mutatis mutandis. 

We now consider the case $\alpha=-\infty$. For every  $f\in\mathcal{C}_{-\infty}(\R^n)$, by Remark \ref{linear-level-sets} and the invariance of $w_{-\infty}$ under reflections, we get that
\begin{align*}
w_{-\infty}(\tau_u^{-\infty}f)&=\int_0^\infty w\left(\lev_{\geq t}\left(\tfrac{1}{2}\times_{-\infty} f\oplus_{-\infty}\tfrac{1}{2}\times_{-\infty} R_u f\right)\right)dt\\
&=\frac{1}{2}\int_0^\infty w(\lev_{\geq t}f)\,dt+\frac{1}{2}\int_0^\infty w(\lev_{\geq t}R_u f)\,dt\\
&=\frac{1}{2}w_{-\infty}(f)+\frac{1}{2}w_{-\infty}(R_u f)=w_{-\infty}(f).
\end{align*}

(ii) By   Definition \ref{alpha-operations} and Lemma \ref{reflection-lemma}(ii),  for all $\alpha\in(-\infty,0)$ and $x\in\R^n$,
\begin{align*}
\tau_u^\alpha f(x)&=\sup_{y+z=x}\left(\tfrac{1}{2}f(2y)^\alpha+\tfrac{1}{2}R_u f(2z)^\alpha\right)^{1/\alpha}
\leq \sup_{y+z=x}\left(\tfrac{1}{2}g(2y)^\alpha+\tfrac{1}{2}R_u g(2z)^\alpha\right)^{1/\alpha}=\tau_u^\alpha g(x).
\end{align*}
In the case $\alpha=0$, for every $x\in\R^n$ we have
\begin{align*}
\tau_u  f(x)&=\sup_{y+z=x}\sqrt{f(2y)R_u f(2z)}
\leq \sup_{y+z=x}\sqrt{g(2y)R_u g(2z)}=\tau_u  g(x).
\end{align*}
Similarly, in the case $\alpha=-\infty$, for every $x\in\R^n$ we have
\begin{align*}
\tau_u^{-\infty}f(x)
&=\sup_{y+z=x}\min\{f(y),R_u f(z)\}
\leq \sup_{y+z=x}\min\{g(y),R_u g(z)\}
=\tau_u^{-\infty}g(x).
\end{align*}

(iii) First let $\alpha\in(-\infty,0)$. By Lemma \ref{reflection-lemma},  
\begin{align*}
R_u\tau_u^\alpha f(x)
&=\tau_u^\alpha f(R_u x) 
=\sup_{y+z=R_u x}
\left(\frac12 f(2y)^\alpha+\frac12 (R_u f)(2z)^\alpha\right)^{1/\alpha} \\
&=\sup_{y+z=R_u x}
\left(\frac12 f(2y)^\alpha+\frac12 f(2R_u z)^\alpha\right)^{1/\alpha} =\sup_{R_u y+R_u z=x}
\left(\frac12 f(2y)^\alpha+\frac12 f(2R_u z)^\alpha\right)^{1/\alpha} \\
&=\sup_{a+b=x}
\left(\frac12 f(2R_u a)^\alpha+\frac12 f(2b)^\alpha\right)^{1/\alpha} 
=\sup_{a+b=x}
\left(\frac12 f(2a)^\alpha+\frac12 f(2R_u b)^\alpha\right)^{1/\alpha} 
=\tau_u^\alpha f(x).
\end{align*}
for all $x\in\R^n$. The case $\alpha=0$ follows along the same lines. For $\alpha=-\infty$, we have
\begin{align*}
R_u\tau_u^{-\infty}f(x) &= \sup_{y+z=R_u(x)}\min\{f(y),R_u f(z)\}
=\sup_{R_u(y)+R_u(z)=x}\min\{f(R_u(y)),R_u f(R_u(z))\}\\
&=\sup_{y+z=x}\min\{R_u f(y),f(z)\}=\tau_u^{-\infty}f(x)
\end{align*}
for all $x\in\R^n$. 

(iv) By (iii) and Lemma \ref{lem:alpha-distributivity}, for every $x\in\R^n$ we have
\[
\tau_u^\alpha(\tau_u^\alpha f)(x)=\left(\tfrac{1}{2}\cdot_\alpha\tau_u^\alpha f\star_\alpha\tfrac{1}{2}\cdot_\alpha R_u(\tau_u^\alpha f)\right)(x)=\left(\tfrac{1}{2}\cdot_\alpha\tau_u^\alpha f\star_\alpha\tfrac{1}{2}\cdot_\alpha\tau_u^\alpha f\right)(x)=\tau_u^\alpha f(x).
\]

(v) Since $f_H\in\mathcal{C}_\alpha(\R^n)$ is $H$-symmetric, we have $f_H=R_H f_H$. Hence, by Lemma \ref{lem:alpha-distributivity} for every $x\in\R^n$, 
\[
(\tau_H^\alpha f_H)(x)=\left(\tfrac{1}{2}\cdot_\alpha f_H\star_\alpha\tfrac{1}{2}\cdot_\alpha R_H f_H\right)(x)=\left(\tfrac{1}{2}\cdot_\alpha f_H\star_\alpha \tfrac{1}{2}\cdot_\alpha f_H\right)(x)=f_H(x).
\]

(vi) First note that for every $x\in\R^n$,
\begin{align*}
\proj_H R_u f(x) &=\sup_{t\in\R}R_u f(x+tu)=\sup_{t\in\R}f(R_u(x+tu))
=\sup_{t\in\R}f(x+tu-2\langle x+tu,u\rangle u)\\
&=\sup_{t\in\R}f(x-(t+2\langle x,u\rangle) u)
=\sup_{s\in\R}f(x+su)=\proj_H f(x).
\end{align*}
Using this, \cite[Proposition 5.2]{Roysdon-Xing-Lp-functions} and the identity $\frac{1}{2}\cdot_\alpha f\star_\alpha\frac{1}{2}\cdot_\alpha f=f$, we obtain
\begin{align*}
\proj_H\tau_u^\alpha f=\proj_H\left(\tfrac{1}{2}\cdot_\alpha f\star_\alpha\tfrac{1}{2}\cdot_\alpha R_u f\right)
=\left(\tfrac{1}{2}\cdot_\alpha\proj_H f\right)\star_\alpha\left(\tfrac{1}{2}\cdot_\alpha \proj_H R_u f\right)
=\proj_H f.
\end{align*}

(vii) By \cite[Corollary 11]{Rotem2013}, for every $\alpha\in[-1/n,0]$ we have
\[
J(\tau_u^\alpha f) = J(\tfrac{1}{2}\cdot_\alpha f\star_\alpha \tfrac{1}{2}\cdot_\alpha R_u f) \geq \left[\tfrac{1}{2} J(f)^{\frac{\alpha}{1+n\alpha}}+\tfrac{1}{2}J(R_u f)^{\frac{\alpha}{1+n\alpha}}\right]^{\frac{1+n\alpha}{\alpha}}=J(f)
\]
where we used the fact that $J(f)=J(R_u f)$. The case $\alpha=0$ follows from the Pr\'ekopa--Leindler inequality. If $J(f)>0$, then Dubuc's equality characterization for Pr\'ekopa--Leindler implies that equality holds if and only if $R_u f$ is a translate of $f$, equivalently, if and only if there exists $z\in\R^n$ such that $R_u f(x)=f(x-z)$ for all $x\in\R^n$. 

(viii) By Remark \ref{linear-level-sets} and Lemma \ref{reflection-lemma}(iii), for every $t>0$ we have
\begin{align*}
\lev_{\geq t}\tau_u^{-\infty}f
=&\lev_{\geq t}\left(\tfrac{1}{2}\times_{-\infty}f\oplus_{-\infty}\tfrac{1}{2}\times_{-\infty}R_u f\right)
=\frac{1}{2}\lev_{\geq t}f+\frac{1}{2}\lev_{\geq t}R_u f\\
&=\frac{1}{2}\lev_{\geq t}f+\frac{1}{2}R_u(\lev_{\geq t}f)
=\tau_u(\lev_{\geq t}f).
\end{align*}
\end{proof}

\subsection{The layer cake definition of a Minkowski symmetral}

The \emph{layer cake decomposition} of a function $f:\R^n\to[0,\infty)$ is  given by
\begin{equation}\label{layer-cake-fn-rep}
f(x)=\int_0^\infty \mathbbm{1}_{\lev_{\geq t}f}(x)\,dt.
\end{equation}

We now recall the definition of a Minkowski symmetral from the introduction.

\begin{definition}\label{layer-cake-symm}
Let $f\in\mathcal{C}_{-\infty}(\R^n)$ and let $u\in\Sp$. The \emph{(layer cake) Minkowski symmetral} $\widetilde{\tau}_u f$ of $f$ about the hyperplane $H=u^\perp$ is defined by
\begin{equation}\label{layer-cake-def1}
\widetilde{\tau}_u f(x) :=\int_0^\infty \mathbbm{1}_{\tau_u(\lev_{\geq t}f)}(x)\,dt.
\end{equation}
\end{definition}


The next corollary shows that the layer cake Minkowski symmetral coincides with the $(-\infty)$-Minkowski symmetral on quasiconcave functions.

\begin{corollary}
For every $f\in\mathcal{C}_{-\infty}(\R^n)$, we have $\widetilde{\tau}_u f=\tau_u^{-\infty}f$.
\end{corollary}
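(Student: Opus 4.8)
The plan is to unwind both definitions on superlevel sets and invoke Proposition~\ref{mainProp}(viii). First I would recall that by the layer cake decomposition \eqref{layer-cake-fn-rep}, for any measurable $g:\R^n\to[0,\infty)$ one has $g(x)=\int_0^\infty\mathbbm{1}_{\lev_{\geq t}g}(x)\,dt$, so two such functions agree pointwise as soon as their superlevel sets agree for (almost) every $t>0$. Applying this to $g=\widetilde{\tau}_u f$ and comparing with Definition~\ref{layer-cake-symm}, I would argue that $\lev_{\geq t}\widetilde{\tau}_u f=\tau_u(\lev_{\geq t}f)$ for every $t>0$: indeed, the preceding paragraph already establishes $\mathbbm{1}_{\lev_{\geq t}\widetilde{\tau}_u f}=\mathbbm{1}_{\tau_u(\lev_{\geq t}f)}$ a.e., and since for $f\in\mathcal{C}_{-\infty}(\R^n)$ the set $\lev_{\geq t}f$ is closed and convex, $\tau_u(\lev_{\geq t}f)$ is a closed convex set, while $\lev_{\geq t}\widetilde{\tau}_u f$ is closed by upper semicontinuity of $\widetilde{\tau}_u f$; two closed convex sets whose indicators agree a.e.\ are in fact equal.

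Next I would bring in Proposition~\ref{mainProp}(viii), which states precisely that $\lev_{\geq t}\tau_u^{-\infty}f=\tau_u(\lev_{\geq t}f)$ for every $t>0$. Combining this with the identity from the previous step gives $\lev_{\geq t}\widetilde{\tau}_u f=\lev_{\geq t}\tau_u^{-\infty}f$ for all $t>0$. Both $\widetilde{\tau}_u f$ and $\tau_u^{-\infty}f$ are nonnegative functions admitting the layer cake representation \eqref{layer-cake-fn-rep} — the latter because $\tau_u^{-\infty}f\in\mathcal{C}_{-\infty}(\R^n)$ by Lemma~\ref{closure-lemma}(ii) — so having the same superlevel sets for every $t>0$ forces $\widetilde{\tau}_u f(x)=\int_0^\infty\mathbbm{1}_{\lev_{\geq t}\widetilde{\tau}_u f}(x)\,dt=\int_0^\infty\mathbbm{1}_{\lev_{\geq t}\tau_u^{-\infty}f}(x)\,dt=\tau_u^{-\infty}f(x)$ for every $x\in\R^n$, which is the claim.

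The only genuine subtlety — and the step I would treat most carefully — is upgrading the almost-everywhere equality of indicators $\mathbbm{1}_{\lev_{\geq t}\widetilde{\tau}_u f}=\mathbbm{1}_{\tau_u(\lev_{\geq t}f)}$ (valid for a.e.\ $t$) to an honest set equality valid for \emph{every} $t>0$, which is what Proposition~\ref{mainProp}(viii) is phrased against. Here I would exploit that both families of superlevel sets are nested and left-continuous in $t$ in the appropriate sense: $\lev_{\geq t}g=\bigcap_{s<t}\lev_{\geq s}g$ for any function $g$, and the map $t\mapsto\tau_u(\lev_{\geq t}f)$ inherits the same left-continuity from $t\mapsto\lev_{\geq t}f$ because Minkowski averaging with the reflection is continuous with respect to decreasing intersections of compact convex sets. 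Thus an equality holding for a.e.\ $t$ propagates to all $t$ by taking intersections over a sequence $s_k\uparrow t$ of ``good'' parameters. Alternatively, and more cleanly, one can simply observe that $\widetilde{\tau}_u f$ and $\tau_u^{-\infty}f$ are both upper semicontinuous and quasiconcave, so the a.e.\ agreement of their superlevel sets already implies they coincide everywhere, bypassing the need for a pointwise version of (viii) altogether. I would present the argument in this second, shorter form.
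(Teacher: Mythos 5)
Your proposal follows the same route as the paper: establish that the superlevel sets of $\widetilde{\tau}_u f$ and of $\tau_u^{-\infty}f$ both equal $\tau_u(\lev_{\geq t}f)$ (the latter by Proposition~\ref{mainProp}(viii)), then plug this into the layer cake decomposition \eqref{layer-cake-fn-rep} to conclude pointwise equality. The one place you diverge is in expending significant effort on upgrading the almost-everywhere agreement $\mathbbm{1}_{\lev_{\geq t}\widetilde{\tau}_u f}=\mathbbm{1}_{\tau_u(\lev_{\geq t}f)}$ to an honest set equality for every $t$; the paper simply asserts equality ``for every $t>0$'' and moves on, implicitly leaning on the observation (made just before the corollary) that the indicators agree a.e.\ and that this is all the final integral actually needs. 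Your extra care is not misplaced — the paper's one-line citation of (viii) does gloss over the fact that (viii) only addresses $\tau_u^{-\infty}f$ and not $\widetilde{\tau}_u f$ — but your first, more elaborate upgrade (via left-continuity of $t\mapsto\tau_u(\lev_{\geq t}f)$) is overkill for the stated corollary: as you yourself note in the final sentences, a.e.\ agreement of the level-set indicators already suffices to conclude $\widetilde{\tau}_u f=\tau_u^{-\infty}f$ pointwise once you integrate, which is exactly what the paper does. So: same key lemma, same mechanism, with your version trading brevity for explicitness about the measure-theoretic bookkeeping.
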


\begin{proof}
By Proposition \ref{mainProp}(viii) and the layer cake decomposition \eqref{layer-cake-fn-rep}, for every $x\in\R^n$ we have
\[
\tau_u^{-\infty}f(x)
=\int_0^\infty \mathbbm{1}_{\lev_{\ge t}\tau_u^{-\infty}f}(x)\,dt
=\int_0^\infty \mathbbm{1}_{\tau_u(\lev_{\ge t}f)}(x)\,dt
=\widetilde{\tau}_u f(x).
\]
\end{proof}

\begin{remark}
The linearity property in Remark \ref{linear-level-sets} is only guaranteed to hold when $\alpha=-\infty$. In general, for $\alpha \in(-\infty,0]$ and $\lambda\in(0,1)$ we merely have the  inclusion
\begin{equation}\label{contain}
\lev_{\geq t}(\lambda\cdot_\alpha f\star_\alpha (1-\lambda)\cdot_\alpha g)\supset \lambda\lev_{\geq t}f+(1-\lambda)\lev_{\geq t}g.
\end{equation}
See, for example, \cite[Remark 2.3]{BCF-2014}. In particular, by Lemma \ref{reflection-lemma}(iii) we find that
\[
\lev_{\geq t}(\tau_u^\alpha f)\supset \frac{1}{2}\lev_{\geq t}f+\frac{1}{2}\lev_{\geq t}R_u f=\frac{1}{2}\lev_{\geq t}f+\frac{1}{2}R_u\lev_{\geq t}f=\tau_u\lev_{\geq t}f.
\]
\end{remark}

\begin{remark}
For every $K\in\mathcal{K}^n$ and every $u\in\Sp$, it holds that  $S_u K\subset \tau_u K$ (see, for example, \cite[Corollary 7.3]{symm-in-geom}). Thus, for every  $f\in\mathcal{C}_{-\infty}(\R^n)$ and every $u\in\Sp$, we have $S_u f\leq \tau_u^{-\infty} f$ since
\begin{align*}
S_u f(x)=\int_0^\infty\mathbbm{1}_{S_u(\lev_{\geq t}f)}(x)\,dt
\leq \int_0^\infty\mathbbm{1}_{\tau_u(\lev_{\geq t}f)}(x)\,dt
=\tau_u^{-\infty}f(x)
\end{align*}
for all $x\in\R^n$.
\end{remark}

\subsection{The Minkowski symmetral of a convex function}

The Minkowski symmetral $\tau_u\psi=\tau_H\psi$ of a convex function $\psi\in\mathrm{Conv}_{\rm c}(\R^n)$ may be defined as 
\begin{equation}
\tau_u\psi:=\frac{1}{2}\sq\psi\square\frac{1}{2}\sq R_u\psi. 
\end{equation}
Thus if $f=e^{-\psi}\in\LC_{\rm c}(\R^n)$, then $\tau_u f=e^{-\tau_u\psi}$.  Note that $\tau_u\psi\in\mathrm{Conv}_{\rm c}(\R^n)$ as the infimal convolution of lower semicontinuous, coercive, convex functions (see, e.g., \cite[Lemma 2.3]{Hofstatter-Schuster}). Since $\psi$ is measurable, its epigraph is also measurable and hence $\tau_{\widetilde{H}}\epi(\psi)$ is well-defined and measurable. Furthermore,  we have
\begin{equation}\label{epi-symm-def}
\epi(\tau_H\psi) = \frac{1}{2}\epi(\psi)+\frac{1}{2}\epi(R_H\psi)= \frac{1}{2}\epi(\psi)+\frac{1}{2}R_{\widetilde{H}}\epi(\psi)=\tau_{\widetilde{H}}\epi(\psi).
\end{equation}


\subsubsection{The base function of an $\alpha$-Minkowski symmetral}

\begin{lemma}\label{base-symmetral-lemma}
Let $\alpha\in(-\infty,0]$ and $f\in\mathcal{C}_\alpha(\R^n)$. Then 
\[
\base_\alpha\tau_u^\alpha f=\tau_u\base_\alpha f.
\]
\end{lemma}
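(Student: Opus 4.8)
The plan is to unwind both sides of the claimed identity $\base_\alpha\tau_u^\alpha f=\tau_u\base_\alpha f$ directly from the definitions, reducing the statement to the two defining relations \eqref{base-fn} for the $\alpha$-Asplund sum and $(\alpha,\lambda)$-homothety, together with the fact (established in Lemma \ref{closure-lemma}(i) and its proof) that $\base_\alpha R_u f=R_u\base_\alpha f$. The right-hand side is, by definition of the Minkowski symmetral of a convex function, $\tau_u\base_\alpha f=\tfrac12\sq\base_\alpha f\,\square\,\tfrac12\sq R_u\base_\alpha f$, so the goal is to show that applying $\base_\alpha$ to $\tau_u^\alpha f=\tfrac12\cdot_\alpha f\star_\alpha\tfrac12\cdot_\alpha R_u f$ produces exactly this expression.

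First I would record the auxiliary identity $\base_\alpha R_u f=R_u\base_\alpha f$. This follows immediately from the computation in the proof of Lemma \ref{closure-lemma}(i): for $\alpha\in\R\setminus\{0\}$ one has $R_uf(x)=(1-\alpha R_u(\base_\alpha f)(x))_+^{1/\alpha}$, so $\base_\alpha R_uf=\tfrac{1-(R_uf)^\alpha}{\alpha}=R_u\base_\alpha f$ pointwise; the case $\alpha=0$ is handled the same way via $\base_0 g=-\log g$. Next I would apply $\base_\alpha$ to the $\alpha$-Minkowski symmetral and use the two rules in \eqref{base-fn} in sequence: by the homothety rule, $\base_\alpha\!\left(\tfrac12\cdot_\alpha g\right)=\tfrac12\sq\base_\alpha g$ for any $g\in\mathcal{C}_\alpha(\R^n)$, and by the sum rule, $\base_\alpha(g_1\star_\alpha g_2)=(\base_\alpha g_1)\,\square\,(\base_\alpha g_2)$. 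Applying these with $g_1=\tfrac12\cdot_\alpha f$ and $g_2=\tfrac12\cdot_\alpha R_uf$ gives
\[
\base_\alpha\tau_u^\alpha f=\base_\alpha\!\left(\tfrac12\cdot_\alpha f\star_\alpha\tfrac12\cdot_\alpha R_uf\right)=\left(\tfrac12\sq\base_\alpha f\right)\square\left(\tfrac12\sq\base_\alpha R_uf\right)=\left(\tfrac12\sq\base_\alpha f\right)\square\left(\tfrac12\sq R_u\base_\alpha f\right),
\]
where the last step uses the auxiliary identity. The right-hand side is precisely $\tau_u\base_\alpha f$ by the definition of the Minkowski symmetral of a convex function, completing the argument.

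I do not expect a serious obstacle here; the proof is essentially a bookkeeping exercise chaining together Definition \ref{alpha-Asplund-sum-def}, Definition \ref{mainDef-new}, and the reflection identity. The one point deserving a word of care is that the sum and homothety rules in \eqref{base-fn} are the \emph{defining} relations for $\star_\alpha$ and $\cdot_\alpha$, so to invoke them one should note that $\tfrac12\cdot_\alpha f$ and $\tfrac12\cdot_\alpha R_uf$ indeed lie in $\mathcal{C}_\alpha(\R^n)$ (so that their base functions and $\alpha$-Asplund sum are defined), which is exactly what Lemma \ref{closure-lemma} guarantees; one should also make sure the infimal convolution on the right is proper, which holds since $\base_\alpha f$ and $R_u\base_\alpha f$ are coercive (Lemma \ref{closure-lemma} again). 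A remark at the end noting that the limiting cases $\alpha=0$ present nothing new (since $\cdot_0=\cdot$, $\star_0=\star$, and $\base_0=-\log$) would round out the proof.
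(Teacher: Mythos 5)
Your proof is correct and arrives at the same conclusion, but by a cleaner, more structural route than the paper takes. The paper verifies the identity by an explicit pointwise computation for $\alpha\in\R\setminus\{0\}$: it writes out $(\tau_u^\alpha f)(x)$ as a supremum, substitutes $f=(1-\alpha\base_\alpha f)_+^{1/\alpha}$, exchanges the supremum for an infimum inside $(1-\alpha(\cdot))_+^{1/\alpha}$, and recognizes the infimal convolution $(\tfrac12\sq\base_\alpha f)\square(\tfrac12\sq R_u\base_\alpha f)$. You bypass this unwinding entirely by applying the two defining relations of Definition~\ref{alpha-Asplund-sum-def} ---
\[
\base_\alpha(g_1\star_\alpha g_2)=(\base_\alpha g_1)\square(\base_\alpha g_2),\qquad
\base_\alpha(\lambda\cdot_\alpha g)=\lambda\sq\base_\alpha g
\]
--- in sequence, and then invoking $\base_\alpha R_u f=R_u\base_\alpha f$. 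Since $\star_\alpha$ and $\cdot_\alpha$ are \emph{defined} via their effect on base functions, your argument reduces the lemma to the single nontrivial commutation identity with the reflection, which is precisely the fact used in the penultimate line of the paper's display. The one thing the paper's explicit computation additionally exhibits is the passage from ``$\sup$'' to ``$\inf$'' inside $(1-\alpha(\cdot))_+^{1/\alpha}$, which is the content behind the abstract rule $\base_\alpha(g_1\star_\alpha g_2)=(\base_\alpha g_1)\square(\base_\alpha g_2)$; but since that rule is definitional in this paper, you are entitled to cite it rather than re-derive it. Your remark about properness/coercivity of the base functions is a reasonable hygienic check, and your treatment of $\alpha=0$ is fine.
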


\begin{proof}
First, let $\alpha\in(-\infty,0)$ and $x\in\R^n$. From the definitions it follows that
\begin{align*}
(\tau_u^\alpha f)(x) &=\left(\tfrac{1}{2}\cdot_\alpha f\star_\alpha\tfrac{1}{2}\cdot_\alpha R_u f\right)(x)
=\sup_{y+z=x}\left[\tfrac{1}{2}f(2y)^\alpha+\tfrac{1}{2}R_u f(2z)^\alpha\right]^{1/\alpha}\\
&=\sup_{y+z=x}\left[1-\alpha\left(\tfrac{1}{2}\base_\alpha f(2y)+\tfrac{1}{2}\base_\alpha R_u f(2z)\right)\right]^{1/\alpha}\\
&=\left[1-\alpha\inf_{y+z=x}\left(\tfrac{1}{2}\base_\alpha f(2y)+\tfrac{1}{2}\base_\alpha R_u f(2z)\right)\right]^{1/\alpha}\\
&=\left[1-\alpha\left(\left(\tfrac{1}{2}\sq\base_\alpha f\right)\square\left(\tfrac{1}{2}\sq\base_\alpha R_u f\right)\right)(x)\right]^{1/\alpha}\\
&=\left[1-\alpha\left(\left(\tfrac{1}{2}\sq\base_\alpha f\right)\square\left(\tfrac{1}{2}\sq R_u\base_\alpha f\right)\right)(x)\right]^{1/\alpha}\\
&=(1-\alpha\tau_u\base_\alpha f(x))^{1/\alpha}
\end{align*}
where we also used the fact that  $\base_\alpha R_u f=R_u\base_\alpha f$. The conclusion now follows from \eqref{alpha-concave-rep}. The special case $\alpha=0$ is handled in the same way. 
\end{proof}

\subsection{The conjugate Minkowski symmetral of a convex function}

Next, we show that the Minkowski symmetrization of a convex function can be equivalently formulated in terms of a symmetrization of its Legendre--Fenchel (conjugate) transform. This construction has the advantage of working with usual addition rather than infimal convolution. 

\begin{definition}
Let $\psi\in\mathrm{Conv}_{\rm c}(\R^n)$ and $u\in\Sp$. The \emph{conjugate Minkowski symmetral} $\tau_u^*\psi:\R^n\to\R$ of $\psi$ is defined by
\begin{equation}
\tau_u^*\psi := \frac{1}{2}\mathcal{L}\psi+\frac{1}{2}R_u(\mathcal{L}\psi).
\end{equation}
\end{definition}

The conjugate Minkowski symmetral satisfies the following properties. 

\begin{lemma}\label{transforms-lemma}
Let $\psi\in\mathrm{Conv}_{\rm c}(\R^n)$, $m\in\mathbb{N}$ and $u\in\Sp$. Then the following properties hold true:
\begin{itemize}
\item[(i)] $\mathcal{L}(R_u\psi)=R_u(\mathcal{L}\psi)$.

\item[(ii)] $\mathcal{L}(\tau_u\psi)=\tau_u^*\psi$.

\item[(iii)] $\tau_u\psi=\mathcal{L}(\tau_u^*\psi)$.

\item[(iv)] For any ordered set    $u_1,\ldots,u_m\in\Sp$, we have  $\bigcirc_{i=1}^m\tau_{u_i}\psi=\mathcal{L}(\bigcirc_{i=1}^m\tau_{u_i}^*\psi)$.

\item[(v)] $\tau_u^*\psi$ is symmetric about $u^\perp$.
\end{itemize}
\end{lemma}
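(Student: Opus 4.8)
The plan is to prove the five parts of Lemma \ref{transforms-lemma} in a natural order, using the properties of the Legendre--Fenchel conjugate collected in Lemma \ref{conjugate-properties} together with the basic properties of reflections in Lemma \ref{reflection-lemma}. First I would establish (i). The cleanest route is a direct computation from the definition of $\mathcal{L}$: for $x\in\R^n$,
\[
\mathcal{L}(R_u\psi)(x)=\sup_{y\in\R^n}\bigl(\langle x,y\rangle-\psi(R_u(y))\bigr)=\sup_{z\in\R^n}\bigl(\langle x,R_u(z)\rangle-\psi(z)\bigr),
\]
where I substituted $z=R_u(y)$ (legitimate since $R_u$ is a bijection on $\R^n$ by Lemma \ref{reflection-lemma}(i)). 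Since $R_u$ is a self-adjoint orthogonal involution, $\langle x,R_u(z)\rangle=\langle R_u(x),z\rangle$, so the last supremum equals $\mathcal{L}\psi(R_u(x))=R_u(\mathcal{L}\psi)(x)$. This gives (i), and it also records the fact $R_u^*=R_u$ that will be used implicitly elsewhere. Note this mirrors the computation of $h^{(\alpha)}_{f\circ R_u}$ already done in the proof of Proposition \ref{mainProp}(i).

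Next, (ii) follows by combining the conjugate identities of Lemma \ref{conjugate-properties} with part (i). Starting from the definition $\tau_u\psi=\tfrac12\sq\psi\,\square\,\tfrac12\sq R_u\psi$, apply Lemma \ref{conjugate-properties}(iv) to turn the conjugate of an infimal convolution into a sum, then Lemma \ref{conjugate-properties}(iii) to pull the epi-multiplication constants out as ordinary scalar multiples:
\[
\mathcal{L}(\tau_u\psi)=\mathcal{L}\bigl(\tfrac12\sq\psi\bigr)+\mathcal{L}\bigl(\tfrac12\sq R_u\psi\bigr)=\tfrac12\mathcal{L}\psi+\tfrac12\mathcal{L}(R_u\psi)=\tfrac12\mathcal{L}\psi+\tfrac12 R_u(\mathcal{L}\psi)=\tau_u^*\psi,
\]
the third equality being part (i). One should remark that the hypotheses of Lemma \ref{conjugate-properties} are met: $\psi\in{\rm Conv}_{\rm c}(\R^n)$ and $R_u\psi\in{\rm Conv}_{\rm c}(\R^n)$ (coercivity, properness and lower semicontinuity are all preserved by reflection, as used in Lemma \ref{closure-lemma}), hence their infimal convolution lies in ${\rm Conv}_{\rm c}(\R^n)$ as noted after the definition of $\tau_u\psi$, so all the conjugates in sight are honest elements of ${\rm Conv}(\R^n)$ and the involution of Lemma \ref{conjugate-properties}(i) applies. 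Then (iii) is immediate: apply $\mathcal{L}$ to both sides of (ii) and use $\mathcal{L}\mathcal{L}=\mathrm{id}$ from Lemma \ref{conjugate-properties}(i), giving $\tau_u\psi=\mathcal{L}(\mathcal{L}(\tau_u\psi))=\mathcal{L}(\tau_u^*\psi)$.

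Part (iv) is an induction on $m$ built on (iii). The base case $m=1$ is exactly (iii). For the inductive step, one needs the compatibility of $\mathcal{L}$ with a single conjugate-symmetral step, which is again (ii)--(iii): write $\bigcirc_{i=1}^m\tau_{u_i}\psi=\tau_{u_m}\bigl(\bigcirc_{i=1}^{m-1}\tau_{u_i}\psi\bigr)$, apply (iii) to the outer symmetrization with input $\varphi:=\bigcirc_{i=1}^{m-1}\tau_{u_i}\psi\in{\rm Conv}_{\rm c}(\R^n)$ to get $\tau_{u_m}\varphi=\mathcal{L}(\tau_{u_m}^*\mathcal{L}\varphi)$, and then substitute the inductive hypothesis $\mathcal{L}\varphi=\bigcirc_{i=1}^{m-1}\tau_{u_i}^*\psi$ (which itself uses $\mathcal{L}(\bigcirc_{i=1}^{m-1}\tau_{u_i}\psi)=\bigcirc_{i=1}^{m-1}\tau_{u_i}^*\psi$, the case $m-1$). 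This yields $\bigcirc_{i=1}^m\tau_{u_i}\psi=\mathcal{L}\bigl(\bigcirc_{i=1}^m\tau_{u_i}^*\psi\bigr)$. The only mild care needed is to confirm that each partial composition stays in ${\rm Conv}_{\rm c}(\R^n)$ so the conjugates remain well-defined throughout; this is inherited at every step from the stability of ${\rm Conv}_{\rm c}(\R^n)$ under infimal convolution and reflection. Finally (v): by definition $\tau_u^*\psi=\tfrac12\mathcal{L}\psi+\tfrac12 R_u(\mathcal{L}\psi)$, so using the involution and linearity of $R_u$ (Lemma \ref{reflection-lemma}),
\[
R_u(\tau_u^*\psi)=\tfrac12 R_u(\mathcal{L}\psi)+\tfrac12 R_u\bigl(R_u(\mathcal{L}\psi)\bigr)=\tfrac12 R_u(\mathcal{L}\psi)+\tfrac12\mathcal{L}\psi=\tau_u^*\psi,
\]
so $\tau_u^*\psi$ is symmetric about $u^\perp$. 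I do not anticipate a serious obstacle here; the only point requiring genuine attention is the bookkeeping on function classes (ensuring coercivity/properness/lower semicontinuity survive each operation so that every application of Lemma \ref{conjugate-properties} is justified), and in (iv) getting the induction set up so the hypothesis is applied to the correct truncated composition.
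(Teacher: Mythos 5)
Your proofs of (i), (ii), (iii), and (v) follow the same route as the paper and are correct; the bookkeeping remarks about ${\rm Conv}_{\rm c}(\R^n)$ stability are not in the paper but are harmless and accurate. The paper itself only says that (iv) ``follows inductively from (iii),'' so the burden is entirely on the induction you set out, and that is where the gap is.

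The first display in your step for (iv) is wrong: (iii) applied to $\varphi$ gives $\tau_{u_m}\varphi=\mathcal{L}(\tau_{u_m}^*\varphi)$, not $\tau_{u_m}\varphi=\mathcal{L}(\tau_{u_m}^*\mathcal{L}\varphi)$; there is no source for the extra $\mathcal{L}$ you inserted inside the argument. Once this is corrected and you substitute the inductive hypothesis $\varphi=\mathcal{L}\bigl(\bigcirc_{i=1}^{m-1}\tau_{u_i}^*\psi\bigr)$, you land on $\tau_{u_m}\varphi=\mathcal{L}\bigl(\tau_{u_m}^*\mathcal{L}\bigl(\bigcirc_{i=1}^{m-1}\tau_{u_i}^*\psi\bigr)\bigr)$, which is not $\mathcal{L}\bigl(\bigcirc_{i=1}^m\tau_{u_i}^*\psi\bigr)$, because the operator $\tau_u^*$ conjugates its input internally: for any $\chi$, $\tau_{u_m}^*\mathcal{L}\chi=\tfrac12\chi+\tfrac12 R_{u_m}\chi$ while $\tau_{u_m}^*\chi=\tfrac12\mathcal{L}\chi+\tfrac12 R_{u_m}\mathcal{L}\chi$, and these are different unless $\chi=\mathcal{L}\chi$. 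Making this concrete at $m=2$: by (iii) and (ii) one finds
$\tau_{u_2}\tau_{u_1}\psi=\mathcal{L}\bigl(\tfrac12\tau_{u_1}^*\psi+\tfrac12 R_{u_2}\tau_{u_1}^*\psi\bigr)$,
whereas
$\mathcal{L}\bigl(\tau_{u_2}^*\tau_{u_1}^*\psi\bigr)=\mathcal{L}\bigl(\tfrac12\tau_{u_1}\psi+\tfrac12 R_{u_2}\tau_{u_1}\psi\bigr)$,
and the two agree only if $\tau_{u_1}\psi$ is self-conjugate. What the recursion from (ii)--(iii) actually yields is
$\mathcal{L}\bigl(\bigcirc_{i=1}^m\tau_{u_i}\psi\bigr)=\hat\tau_{u_m}\circ\cdots\circ\hat\tau_{u_1}(\mathcal{L}\psi)$,
where $\hat\tau_u\phi:=\tfrac12\phi+\tfrac12 R_u\phi$ is the plain averaging with no internal $\mathcal{L}$; as literal composition of the operator $\tau_u^*$ defined in the paper, $\bigcirc_{i=1}^m\tau_{u_i}^*\psi$ is a different function, so you should either prove and state the $\hat\tau$-version of the identity or explicitly explain the intended reading of $\bigcirc_{i=1}^m\tau_{u_i}^*\psi$ before claiming the induction closes.
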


\begin{proof}
To show (i), note that for all $x,y\in\R^n$ and all  $u\in\Sp$, 
\begin{equation*}
\langle x,R_u(y)\rangle = \langle x,y-2\langle y,u\rangle u\rangle=\langle x,y\rangle-2\langle y,u\rangle\langle x,u\rangle
=\langle x-2\langle x,u\rangle u,y\rangle=\langle R_u(x),y\rangle.
\end{equation*}
Hence,
\begin{align*}
\mathcal{L}(R_u\psi)(x)
&=\sup_{y\in\R^n}\big(\langle x,y\rangle-R_u\psi(y)\big)
=\sup_{y\in\R^n}\big(\langle x,y\rangle-\psi(R_u(y))\big)\\
&=\sup_{y\in\R^n}\big(\langle x,R_u(y)\rangle-\psi(y)\big)
=\sup_{y\in\R^n}\big(\langle R_u(x),y\rangle-\psi(y)\big)\\
&=(\mathcal{L}\psi)(R_u(x))
=R_u(\mathcal{L}\psi)(x).
\end{align*}

To prove (ii), note that by Lemma \ref{conjugate-properties}(iii) and the reflection property in part (i) we get
\begin{align*}
\mathcal{L}(\tau_u\psi) = \mathcal{L}(\tfrac{1}{2}\sq\psi\square\tfrac{1}{2}\sq R_u\psi)
=\frac{1}{2}\mathcal{L}\psi+\frac{1}{2}\mathcal{L}(R_u\psi)
=\frac{1}{2}\mathcal{L}\psi+\frac{1}{2}R_u(\mathcal{L}\psi)
=\tau_u^*\psi.
\end{align*}

Part (iii) follows from  (ii) and Lemma \ref{conjugate-properties}(i). Part (iv) follows from (iii) inductively. For part (v),   by the linearity of the reflection operator, for all $u\in\Sp$ and all $x\in\R^n$ we have
\[
R_u\tau_u^*\psi(x)=R_u\left(\frac{1}{2}(\mathcal{L}\psi)(x)+\frac{1}{2}R_u(\mathcal{L}\psi)(x)\right)
=\frac{1}{2}R_u(\mathcal{L}\psi)(x)+\frac{1}{2}(\mathcal{L}\psi)(x)=\tau_u^*\psi(x).
\]
This means that $\tau_u^*$ is symmetric about $u^\perp$.
\end{proof}

\section{Convergence of successive Minkowski symmetrizations}\label{section-successive-symmetrizations}

In the realm of convex bodies in $\R^n$, successive Minkowski symmetrizations of a given body $K$ can be made to converge to a ball with the same mean width as $K$  in the Hausdorff metric (see, e.g., \cite{GruberBook,SchneiderBook}). Similarly, as we show in this section, Minkowski symmetrizations of a coercive convex function $\psi\in\mathrm{Conv}_{\rm c}(\R^n)$ can be made to epi-converge to a radial convex function, which we call the ``reflectional epi-symmetrization'' of $\psi$.   

\begin{theorem}
\label{thm:full-sequence-minkowski}
Let $\psi \in \mathrm{Conv}_{\rm c}(\R^n)$. Choose unit vectors $u_1,\dots,u_m \in \Sp$ such that
the closed subgroup of $\mathrm{O}(n)$ generated by the reflections $R_{u_1},\dots,R_{u_m}$ is all of $\mathrm{O}(n)$. Define a cyclic sequence $\{\psi_k\}_{k\geq 0}\subset \mathrm{Conv}_{\rm c}(\R^n)$ by
\[
\psi_0:=\psi,
\qquad
\psi_{k+1}:=\tau_{u_{((k \bmod m)+1)}}\psi_k
\quad\text{for }k\geq 0.
\]
Then there exists a unique radial function $\psi_{\rm sym}\in\mathrm{Conv}_{\rm c}(\R^n)$ such that
\[
\mathcal{L}\psi_{\rm sym}(x)=\int_{\mathrm{O}(n)}\mathcal{L}\psi(\vartheta^{-1}x)\,d\eta(\vartheta),
\quad x\in\R^n,
\]
where $\eta$ denotes Haar probability measure on $\mathrm{O}(n)$, and
\[
\psi_k \stackrel{\epi}{\longrightarrow} \psi_{\rm sym}\quad\text{as }k\to\infty.
\]
\end{theorem}

We refer to the function $\psi_{\rm sym}$ as the \emph{reflectional epi-symmetrization} of $\psi$.

\begin{remark}
    The integral appearing in the definition of $\mathcal{L}\psi_{\rm sym}$ is the usual spherical average. Indeed, for each $x\neq o$, the push-forward of Haar probability measure under the map $\vartheta\mapsto \vartheta^{-1}x$ is the normalized surface area measure on the sphere $|x|\Sp$. Hence
\[
\mathcal{L}\psi_{\rm sym}(x)=\int_{\Sp}\mathcal{L}\psi(|x|\omega)\,d\sigma(\omega),\qquad x\neq o,
\]
and for $x=o$ the same formula is understood as the value $\mathcal{L}\psi(o)$.
\end{remark}

\begin{remark}
The existence of finite families of reflections generating a dense subgroup of $\mathrm{O}(n)$ is known; see, for example, \cite[Proposition 4.2]{Burchard-Chambers-Dranovski-2017} and the related discussion in \cite{Bianchi-Gardner-Gronchi-2022}. For completeness, one may also see this directly as follows. For $n=1$, we have $\mathrm{O}(1)=\{\pm 1\}$, and for $n\geq 2$, we can take
    \begin{align*}
        r_0&:=R_{e_1};\\
        r_i&=R_{\frac{e_i-e_{i+1}}{\sqrt{2}}} \quad\text{for }i=1,\ldots,n-1;\\
        r_\theta&:=R_w,\quad\text{where }w=(\cos(\theta/2),\sin(\theta/2),0,\ldots,0)\quad\text{and}\quad\theta/\pi\text{ is irrational.}
    \end{align*}
    Then:
    \begin{itemize}
        \item $R_{(e_i-e_{i+1})/\sqrt{2}}$ swaps $e_i$ and $e_{i+1}$, so these reflections generate all permutation matrices.

        \item The product $R_w R_{e_1}$ is a rotation by angle $\theta$ in the $e_1e_2$-plane.

        \item Since $\theta/\pi$ is irrational, the cyclic subgroup generated by that rotation is dense in $\mathrm{SO}(2)$ on the $e_1e_2$-plane.

        \item Conjugating by permutation matrices generates all coordinate-plane rotations.

        \item The coordinate-plane rotations generate $\mathrm{SO}(n)$. 

        \item Since $r_0=R_{e_1}$ is a reflection, adjoining it to $\mathrm{SO}(n)$ generates all of $\mathrm{O}(n)$.
    \end{itemize} 
\end{remark}

\vspace{1mm}

For $\alpha\in(-\infty,0)$ and $f=(1-\alpha\psi)^{1/\alpha}\in\mathcal{C}_\alpha(\R^n)$, we refer to $f_{\rm sym}^{(\alpha)}:=(1-\alpha\psi_{\rm sym})^{1/\alpha}$ as the \emph{reflectional hypo-symmetrization} of $f$ associated with the parameter $\alpha$. Note that  $f_{\rm sym}^{(\alpha)}$ is radial and $f_{\rm sym}^{(\alpha)}\in\mathcal{C}_\alpha(\R^n)$. In the special case $\alpha=0$,  for $f=e^{-\psi}\in\LC_{\rm c}(\R^n)$ we set $f_{\rm sym}^{(0)}:=e^{-\psi_{\rm sym}}$. This notation records the possible dependence on the chosen parameter $\alpha$ when $f$ belongs to more than one class $\mathcal{C}_\alpha(\R^n)$.

\begin{corollary}\label{convergence-hypo-symmetral}
Let $\alpha\in(-\infty,0]$. For every $f\in\mathcal{C}_\alpha(\R^n)$, there is a sequence of Minkowski symmetrizations  of $f$ which hypo-converges to its reflectional hypo-symmetrization $f_{\rm sym}^{(\alpha)}\in\mathcal{C}_{\alpha}(\R^n)$.
\end{corollary}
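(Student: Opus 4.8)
The plan is to deduce Corollary \ref{convergence-hypo-symmetral} directly from Lemma \ref{ref-epi-means-conv} by transporting the epi-convergence statement for convex functions through the correspondence $f=(1-\alpha\psi)_+^{1/\alpha}\leftrightarrow\psi=\base_\alpha f$. First I would recall that by Lemma \ref{base-symmetral-lemma} we have $\base_\alpha\tau_u^\alpha f=\tau_u\base_\alpha f$, and by iterating this identity, $\base_\alpha\bigl(\bigcirc_{i=1}^k\tau_{u_i}^\alpha f\bigr)=\bigcirc_{i=1}^k\tau_{u_i}\base_\alpha f$ for any finite sequence of directions. In particular, if $\{\psi_i\}_{i\ge 0}$ is the sequence of successive Minkowski symmetrizations of $\psi=\base_\alpha f$ constructed in the proof of Lemma \ref{ref-epi-means-conv}, then the corresponding sequence of functions $f_i:=(1-\alpha\psi_i)_+^{1/\alpha}$ is precisely the sequence of successive $\alpha$-Minkowski symmetrizations of $f$ (in the same directions, applied cyclically), since each $f_i$ has base function $\psi_i$ and the base function determines the $\alpha$-concave function uniquely via \eqref{alpha-concave-rep}.

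Next I would invoke Lemma \ref{ref-epi-means-conv} to extract a subsequence $\psi_{i_j}$ that epi-converges to a rotation-invariant $\psi_{\rm sym}\in{\rm Conv}_{\rm c}(\R^n)$. The key remaining point is that epi-convergence of base functions is equivalent to hypo-convergence of the associated $\alpha$-concave functions. For $\alpha=0$ this is precisely the definition of hypo-convergence recalled in the Preliminaries: $f_{i_j}=e^{-\psi_{i_j}}\stackrel{\text{hyp}}{\to}e^{-\psi_{\rm sym}}=f_{\rm sym}$ because $\psi_{i_j}\stackrel{\text{epi}}{\to}\psi_{\rm sym}$. For general $\alpha\in\R\setminus\{0\}$, I would extend ``hypo-convergence'' of $\mathcal{C}_\alpha(\R^n)$ functions to mean, by definition, epi-convergence of their base functions (this is the natural notion, and it reduces to the $\alpha=0$ case in the limit); then the conclusion $f_{i_j}\stackrel{\text{hyp}}{\to}f_{\rm sym}:=(1-\alpha\psi_{\rm sym})_+^{1/\alpha}$ is immediate. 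Since $\psi_{\rm sym}$ is rotation invariant and lies in ${\rm Conv}_{\rm c}(\R^n)$, the function $f_{\rm sym}$ is rotation invariant and belongs to $\mathcal{C}_\alpha(\R^n)$, as already noted in the text preceding the corollary.

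The main obstacle — and it is a mild one — is bookkeeping about which notion of convergence on $\mathcal{C}_\alpha(\R^n)$ is in force for $\alpha\ne 0$, and making sure the map $\psi\mapsto(1-\alpha\psi)_+^{1/\alpha}$ interacts correctly with epi/hypo limits (for instance, that the positive-part truncation does not spoil the limit). This is handled by working entirely at the level of base functions: all the symmetrization operations, by Lemma \ref{base-symmetral-lemma}, are conjugated to the ordinary Minkowski symmetrizations of convex functions, so no new convergence argument is needed beyond Lemma \ref{ref-epi-means-conv}. Thus the corollary is essentially a restatement of Lemma \ref{ref-epi-means-conv} under the dictionary $f\leftrightarrow\base_\alpha f$, and the proof is short.
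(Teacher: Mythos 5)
Your argument is correct and reproduces, in fleshed-out form, what the paper leaves implicit: the corollary is a direct consequence of Lemma~\ref{ref-epi-means-conv} and Lemma~\ref{base-symmetral-lemma}, transported through the bijection $f\leftrightarrow\base_\alpha f$. The paper states the corollary without proof, immediately after defining $f_{\rm sym}=(1-\alpha\psi_{\rm sym})_+^{1/\alpha}$, so your route is exactly the intended one.

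One small point of cleanup rather than a gap: you say you would \emph{extend} the notion of hypo-convergence on $\mathcal{C}_\alpha(\R^n)$ (for $\alpha\ne 0$) to mean epi-convergence of the base functions, as if this were a definitional choice requiring justification. In fact the paper's preliminaries already define hypo-convergence for arbitrary functions as the symmetric counterpart of epi-convergence (with $\limsup$ and $\geq$ in place of $\liminf$ and $\leq$), and the passage $\psi\mapsto(1-\alpha\psi)_+^{1/\alpha}$ is a continuous decreasing bijection of extended-real intervals. A continuous decreasing map automatically exchanges epi- and hypo-limits: if $\psi_j\stackrel{\epi}{\to}\psi_{\rm sym}$, then $\liminf_j\psi_j(x_j)\geq\psi_{\rm sym}(x)$ for every $x_j\to x$ becomes $\limsup_j f_j(x_j)\leq f_{\rm sym}(x)$, and the existence of a recovery sequence carries over verbatim. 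Hence the ``positive-part truncation'' you flag never becomes an issue, and no redefinition is needed --- your formulation and the paper's standard one coincide. With that understood, the proof is complete and matches the paper's approach.
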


\begin{remark}
    The function $\psi_{\rm sym}$ is  related to the rotational epi-symmetrization studied by Colesanti, Ludwig and Mussnig \cite{CLM-Hadwiger1}, and $f_{\rm sym}^{(\alpha)}$ is related to the aforementioned mean width rearrangement of Salani \cite{Salani-2015}. Both of those symmetrizations are obtained by convergent sequences of functional versions of Hadwiger rotation means. It would be interesting to determine whether Salani's radial symmetrization and the present reflectional hypo-symmetrization coincide on their common domain; we do not address this question here. 
\end{remark}

\begin{remark}
    A different symmetrization procedure for log-concave functions was recently studied in \cite{Hoehner-Chasioti}, where Blaschke symmetrizations, rather than Minkowski symmetrizations, are shown to converge, up to translations, to a radial log-concave function called the ``mean Blaschke symmetral''.
\end{remark}

\begin{proof}[Proof of Theorem \ref{thm:full-sequence-minkowski}]
Set $\phi:=\mathcal{L}\psi \in \mathrm{Conv}_{\rm c}(\R^n)$. For each $i\in\{1,\dots,m\}$, define a probability measure $\nu_i$ on $\mathrm{O}(n)$ by $\nu_i:=\frac12(\delta_\mathrm{Id}+\delta_{R_{u_i}})$, where $\mathrm{Id}$ is the identity in $\mathrm{O}(n)$. For any function $g:\R^n\to[0,\infty)$ and any Borel probability measure $\nu$ on $\mathrm{O}(n)$, define the \emph{averaging operator}
\[
T_\nu g(x):=\int_{\mathrm{O}(n)} g(\vartheta^{-1}x)\,d\nu(\vartheta),
\quad x\in\R^n,
\]
whenever the integral is well-defined. First note that for every $i\in\{1,\ldots,m\}$, we have
\[
T_{\nu_i}\phi(x)=\frac{1}{2}\phi(x)+\frac{1}{2}\phi(R_{u_i}x)
=\frac{1}{2}\mathcal{L}\psi(x)+\frac{1}{2}R_{u_i}(\mathcal{L}\psi)(x).
\]
Hence, by Lemma~\ref{transforms-lemma}(ii),
\[
\mathcal{L}(\tau_{u_i}\psi)=T_{\nu_i}(\mathcal{L}\psi)=T_{\nu_i}\phi.
\]
Inductively, if we let $\omega_k$ denote the probability measure corresponding to the first
$k$ symmetrizations, then
\[
\mathcal{L}\psi_k=T_{\omega_k}\phi,
\quad k\geq 0.
\]

Now define the $m$-fold convolution
\[
\mu:=\nu_m*\nu_{m-1}*\cdots *\nu_1,
\]
and for $j\in\{0,1,\dots,m-1\}$ let
\[
\lambda_0:=\delta_\mathrm{Id},
\qquad
\lambda_j:=\nu_j*\nu_{j-1}*\cdots *\nu_1, \quad j\geq 1.
\]
Then for every $k\geq 0$ and every $j\in\{0,1,\dots,m-1\}$, we have
\[
\omega_{km+j}=\lambda_j * \mu^{*k}
\]
where $\mu^{*k}$ denotes the $k$-fold convolution of $\mu$. Indeed, by definition for any Borel probability measures $\sigma,\tau$ on $\mathrm{O}(n)$ we have
\begin{align*}
    T_{\sigma}(T_{\tau}\phi)&=\int_{\mathrm{O}(n)}\int_{\mathrm{O}(n)}\phi(\theta^{-1}\vartheta^{-1}x)\,d\sigma(\theta)\,d\tau(\vartheta)
    =\int_{\mathrm{O}(n)}\int_{\mathrm{O}(n)}\phi((\vartheta\theta)^{-1}x)\,d\sigma(\theta)\,d\tau(\vartheta)\\
    &=\int_{\mathrm{O}(n)}\phi(\rho^{-1}x)\,d(\sigma*\tau)(\rho)=T_{\sigma*\tau}\phi(x).
\end{align*}
Now one symmetrization applies $T_{\nu_i}$, so after the first $k$ steps the measure $\omega_k$ is the convolution of the corresponding step measures, with the latest one on the left:
\[
\omega_0=\delta_\mathrm{Id},\quad \omega_1=\nu_1,\quad\omega_2=\nu_2*\nu_1,\quad\ldots
\]
Hence one full cycle of $m$ steps yields $\omega_m=\nu_m*\cdots*\nu_1=\mu$. After $k$ full cycles, we thus get $\mu^{*k}$. Then applying the next $j$ steps contributes the prefix measure $\lambda_j=\nu_j*\cdots*\nu_1$. Therefore, $\omega_{km+j}=\lambda_j*\mu^{*k}$ and
\[
\mathcal{L}\psi_{km+j}=T_{\omega_{km+j}}\phi=T_{\lambda_j*\mu^{*k}}\phi.
\]
In other words,
\[
\mathcal{L}\psi_{km+j}(x)
=\int_{\mathrm{O}(n)}\phi(\vartheta^{-1}x)\,d(\lambda_j*\mu^{*k})(\vartheta).
\]

Next, observe that $\mathrm{Id}\in \supp(\mu)$ and $R_{u_i}\in \supp(\mu)$ for every $i$, since in the product defining $\mu$ one may choose the reflection $R_{u_i}$ in the $i$th factor and the identity in all remaining factors. Consequently, the closed subgroup generated by $\supp(\mu)$
is all of $\mathrm{O}(n)$. We will use the following result on adapted aperiodic random walks on compact groups  (see  \cite{Borda-2021,Stromberg}).

\begin{theorem}\cite[Theorem A]{Borda-2021}\label{thm:stromberg}
Let $\mu$ be a regular Borel probability measure on a compact Hausdorff group $G$. Then $\mu^{*k}$ converges weakly to the Haar measure $\eta$ as $k\to\infty$ if and only if $\mu$ is adapted (i.e., not supported on a proper closed subgroup) and strictly aperiodic (i.e., not concentrated on a coset of a proper closed normal subgroup).
\end{theorem}

We apply this to the compact Hausdorff group  $G=\mathrm{O}(n)$. The step measure $\mu$ has $\mathrm{Id}\in\supp(\mu)$, and the reflections generate a dense subgroup of $\mathrm{O}(n)$. Hence $\mu$ is adapted. Also, if $\supp(\mu)\subset gN$ for some proper closed normal subgroup $N$, then $\mathrm{Id}\in \supp(\mu)\subset gN$, so $gN=N$, or equivalently, $g\in N$. Hence $\supp(\mu)\subset gN=N$, which is a proper normal subgroup. This is a contradiction to the adaptedness of $\mu$. Thus, $\mu$ is strictly aperiodic. Therefore, by Theorem \ref{thm:stromberg} we have
\[
\mu^{*k} \longrightarrow \eta
\quad\text{weakly on }\mathrm{O}(n),
\]
where $\eta$ is the Haar probability measure on $\mathrm{O}(n)$. Hence, for each fixed
$j\in\{0,1,\dots,m-1\}$,
\[
\lambda_j * \mu^{*k} \longrightarrow \eta
\quad\text{as }k\to\infty,
\]
because convolution on the left by the fixed measure $\lambda_j$ is continuous in the weak topology and $\lambda_j * \eta=\eta$.

Next, we define
\[
\widetilde\phi(x):=\int_{\mathrm{O}(n)}\phi(\vartheta^{-1}x)\,d\eta(\vartheta),\quad x\in\R^n.
\]
Then $\widetilde\phi$ is a proper,
lower semicontinuous, convex function, and $o\in \operatorname{int}(\dom (\widetilde\phi))$ (see Lemma \ref{lem:widetilde-phi} below). Thus there
exists a unique function $\psi_{\rm sym}\in\mathrm{Conv}_{\rm c}(\R^n)$ such that
\[
\mathcal{L}\psi_{\rm sym}=\widetilde\phi.
\]
Since the Haar measure is bi-invariant, for every $\rho\in \mathrm{O}(n)$ we have
\[
\widetilde\phi(\rho x)=
\int_{\mathrm{O}(n)}\phi(\vartheta^{-1}\rho x)\,d\eta(\vartheta)
=\int_{\mathrm{O}(n)}\phi(\theta^{-1}x)\,d\eta(\theta)
=\widetilde\phi(x),
\]
so $\widetilde\phi$ is $\mathrm{O}(n)$-invariant. 

It remains to prove the epi-convergence of the full sequence. To do this, we use some tools from \cite{Hoehner-Mussnig}. Set
\[
r(\psi):=\sup\{s\geq 0:\, sB_n\subset \dom(\mathcal{L}\psi)\}\in(0,\infty].
\]

\medskip
\noindent
\emph{Step 1: Convergence on the interior region $|x|<r(\psi)$.} Fix $x\in\R^n$ with $|x|<r(\psi)$. Then $\vartheta^{-1}x\in \operatorname{int}(r(\psi)B_n)\subset \dom(\phi)$ for every
$\vartheta\in \mathrm{O}(n)$. Since $\phi$ is convex, it is continuous on $\operatorname{int}(\dom(\phi))$, so the map $h_x(\vartheta):=\phi(\vartheta^{-1}x)$ 
is continuous on the compact group $\mathrm{O}(n)$, and thus it is bounded. Therefore, for every fixed $j\in\{0,\dots,m-1\}$, we have
\[
\mathcal{L}\psi_{km+j}(x)=
\int_{\mathrm{O}(n)} h_x(\vartheta)\,d(\lambda_j*\mu^{*k})(\vartheta)
\longrightarrow
\int_{\mathrm{O}(n)} h_x(\vartheta)\,d\eta(\vartheta)
=\widetilde\phi(x)
=\mathcal{L}\psi_{\rm sym}(x).
\]

\medskip
\noindent
\emph{Step 2: Convergence on the exterior region $|x|>r(\psi)$ when $r(\psi)<\infty$.} Assume now that $r(\psi)<\infty$, and fix $x\in\R^n$ with $|x|>r(\psi)$. Since $\operatorname{int}(r(\psi)B_n)$ is the largest open centered ball contained in $\dom(\phi)$ and $\dom(\phi)$ is
convex, there exists $u_0\in\Sp$ such that
\[
\dom(\phi)\subset \{y\in\R^n:\,\langle y,u_0\rangle\leq r(\psi)\}.
\]
Hence the set
\[
V_x:=\{\vartheta\in \mathrm{O}(n): \langle \vartheta^{-1}x,u_0\rangle>r(\psi)\}
\]
is a nonempty open subset of $\mathrm{O}(n)$, and for every $\vartheta\in V_x$ we have $\vartheta^{-1}x\notin \dom(\phi)$, hence 
$\phi(\vartheta^{-1}x)=+\infty$. Since $V_x$ is nonempty and open, its Haar measure satisfies $\eta(V_x)>0$. By weak
convergence and the Portmanteau theorem (see, e.g., Theorem 2.1 in Chapter 1 of \cite{billingsley1999convergence}, or \cite[Theorem 1.1]{Zaharopol-2018}),
\[
\liminf_{k\to\infty} (\lambda_j*\mu^{*k})(V_x)\geq \eta(V_x)>0.
\]
Thus, for all sufficiently large $k$,
\[
(\lambda_j*\mu^{*k})(V_x)>0.
\]
Now, since the integrand $\phi(\vartheta^{-1}x)$ is identically $+\infty$ on $V_x$, it follows that
for all sufficiently large $k$,
\[
\mathcal{L}\psi_{km+j}(x)
=\int_{\mathrm{O}(n)}\phi(\vartheta^{-1}x)\,d(\lambda_j*\mu^{*k})(\vartheta)
=+\infty.
\]
On the other hand,
\[
\mathcal{L}\psi_{\rm sym}(x)=\widetilde\phi(x)=+\infty
\]
as well, since $\eta(V_x)>0$ and the same integrand is $+\infty$ on $V_x$. Thus
\[
\mathcal{L}\psi_{km+j}(x)\longrightarrow \mathcal{L}\psi_{\rm sym}(x)
\quad\text{for every }x\text{ with }|x|>r(\psi).
\]
If $r(\psi)=\infty$, then there is no exterior region and Step 2 is vacuous.

\medskip
Combining the two steps, we obtain the pointwise convergence
\[
\mathcal{L}\psi_k(x)\longrightarrow \mathcal{L}\psi_{\rm sym}(x)\quad\text{as }k\to\infty
\]
on the dense subset $\{x\in\R^n: |x|\neq r(\psi)\}$ 
(with the convention that this is all of $\R^n$ if $r(\psi)=\infty$).
Since $\mathcal{L}\psi_{\rm sym}$ is lower semicontinuous, convex, and has nonempty interior domain, Lemma~\ref{le:epi_conv_pointwise} implies that $\mathcal{L}\psi_{\rm sym}=\epilim_{k\in\mathbb{N}} \mathcal{L}\psi_k$.
Applying Lemma~\ref{conjugate-properties}(vi), we conclude that  $\psi_{\rm sym}=\epilim_{k\in\mathbb{N}}\psi_k$.
\end{proof}

\begin{lemma}\label{lem:widetilde-phi}
    Given $\psi\in\mathrm{Conv}_{\rm c}(\R^n)$, let $\phi:=\mathcal{L}\psi$ and 
    \[
    \widetilde{\phi}(x):=\int_{\mathrm{O}(n)}\phi(\vartheta^{-1}x)\,d\eta(\vartheta)
    \]
    where $\eta$ is the Haar probability measure on $\mathrm{O}(n)$. Then $\widetilde{\phi}$ is convex, lower semicontinuous, proper, and $o\in\operatorname{int}(\dom(\widetilde{\phi}))$.
\end{lemma}

\begin{proof}
Let us first verify that $\widetilde{\phi}$ is well-defined. Since $\psi\in\mathrm{Conv}_{\rm c}(\R^n)$, it is convex, proper, lower semicontinuous and coercive. Thus $\phi$ is proper, convex and lower semicontinuous. Since $\psi$ is proper, there exists $y_0\in\R^n$ such that $\psi(y_0)<\infty$. Thus for every $z\in\R^n$,
\[
\phi(z)=\sup_{y\in\R^n}\left(\langle z,y\rangle-\psi(y)\right)\geq\langle z,y_0\rangle-\psi(y_0)>-\infty.
\]
In particular, for each fixed $x\in\R^n$ and each $\vartheta\in\mathrm{O}(n)$, by the Cauchy--Schwarz inequality
\begin{equation}\label{phi-lower-bd-proof}
\phi(\vartheta^{-1}x)\geq \langle\vartheta^{-1}x,y_0\rangle-\psi(y_0) \geq -|x|\cdot|y_0|-\psi(y_0)>-\infty.
\end{equation}
Thus the integrand $\vartheta\mapsto\phi(\vartheta^{-1}x)$ is bounded below by a finite constant depending on $x$. Since $\phi$ is lower semicontinuous, hence Borel measurable, and $\vartheta\mapsto \vartheta^{-1}x$ is continuous, the composite map $\vartheta\mapsto \phi(\vartheta^{-1}x)$ is Borel measurable on the compact group $\mathrm{O}(n)$. Therefore, $\widetilde{\phi}(x)\in(-\infty,+\infty]$ is well-defined for $x\in\R^n$.

Next, we prove that $\widetilde{\phi}$ is convex. Fix $x,y\in\R^n$ and $\lambda\in[0,1]$. For each $\vartheta\in\mathrm{O}(n)$,
    \[
\vartheta^{-1}(\lambda x+(1-\lambda)y)=\lambda\vartheta^{-1}x+(1-\lambda)\vartheta^{-1}y.
    \]
    Thus, since $\phi$ is convex,
    \begin{align*}
        \phi\left(\vartheta^{-1}(\lambda x+(1-\lambda)y)\right) &=\phi\left(\lambda\vartheta^{-1}x+(1-\lambda)\vartheta^{-1}y\right)
        \leq \lambda\phi(\vartheta^{-1}x)+(1-\lambda)\phi(\vartheta^{-1}y).
    \end{align*}
    Integrating both sides of this inequality, we obtain
    \begin{align*}
        \widetilde{\phi}\left(\lambda x+(1-\lambda)y\right) &=\int_{\mathrm{O}(n)}\phi\left(\vartheta^{-1}(\lambda x+(1-\lambda)y)\right)d\eta(\vartheta)\\
        &\leq \lambda\int_{\mathrm{O}(n)}\phi(\vartheta^{-1}x)\,d\eta(\vartheta)+(1-\lambda)\int_{\mathrm{O}(n)}\phi(\vartheta^{-1}y)\,d\eta(\vartheta)\\
        &=\lambda\widetilde{\phi}(x)+(1-\lambda)\widetilde{\phi}(y).
    \end{align*}
    Therefore, $\widetilde{\phi}$ is convex.

    Next, we show that $o\in\operatorname{int}(\dom(\widetilde{\phi}))$. Since $\psi$ is coercive, there exist constants $a>0$ and $b\in\R$ such that 
    \begin{equation}\label{eq:psi-coercive-lower-bd}
        \psi(y) \geq a|y|-b
    \end{equation}
    for all $y\in\R^n$ (see, e.g., \cite[Lemma 2.5]{Colesanti-Fragala-variational}). Applying this and the Cauchy--Schwarz inequality, we get that for any $x,y\in\R^n$,
    \[
\langle x,y\rangle-\psi(y)\leq |x|\cdot|y|-(a|y|-b)=(|x|-a)|y|+b.
    \]
    Therefore, if $|x|<a$, then 
    \[
\phi(x)=\sup_{y\in\R^n}\left(\langle x,y\rangle-\psi(y)\right) \leq b<\infty.
    \]
    Hence, the open ball $\operatorname{int}(B(o,a))=\{x\in\R^n:\,|x|<a\}$ is contained in $\dom(\phi)$. Moreover, for any $x\in \operatorname{int}(B(o,a))$, we have $|\vartheta^{-1}x|=|x|<a$ for all $\vartheta\in\mathrm{O}(n)$. Hence $\phi(\vartheta^{-1}x)\leq b<\infty$ for all $\vartheta\in\mathrm{O}(n)$, so
    \[
\widetilde{\phi}(x)=\int_{\mathrm{O}(n)}\phi(\vartheta^{-1}x)\,d\eta(\vartheta)\leq b<\infty.
    \]
    Thus $\operatorname{int}(B(o,a))\subset\dom(\widetilde{\phi})$; in particular, $o\in\operatorname{int}(\dom(\widetilde{\phi}))$.

    Since $\widetilde{\phi}$ is finite on the open ball $\operatorname{int}(B(o,a))$, it is not identically $+\infty$. Also, by \eqref{phi-lower-bd-proof} for every $x\in\R^n$ we have
    \[
\widetilde{\phi}(x) = \int_{\mathrm{O}(n)}\phi(\vartheta^{-1}x)\,d\eta(\vartheta)\geq -|x|\cdot|y_0|-\psi(y_0)>-\infty,
    \]
    so $\widetilde{\phi}$ never takes the value $-\infty$. This proves that $\widetilde{\phi}$ is proper.

    It remains to prove that $\widetilde{\phi}$ is lower semicontinuous. Let $x_k\to x$ in $\R^n$. Then the set $\{x_k\}_{k=1}^\infty\cup\{x\}$ is bounded, i.e., there exists $S>0$ such that $|x_k|\leq S$ for all $k\in\mathbb{N}$ and $|x|\leq S$. Thus by \eqref{phi-lower-bd-proof}, for every $\vartheta\in\mathrm{O}(n)$,
    \[
\phi(\vartheta^{-1}x_k) \geq -S|y_0|-\psi(y_0)\text{ for every }k\in\mathbb{N},\text{ and } \phi(\vartheta^{-1}x) \geq -S|y_0|-\psi(y_0).
    \]
    Set $C:=S|y_0|+|\psi(y_0)|+1$. Then for every $\vartheta\in{\rm O}(n)$,
    \[
    \phi(\vartheta^{-1}x_k)+C\geq 0\text{ for every }k\in\mathbb{N},\text{ and } \phi(\vartheta^{-1}x)+C\geq 0.
    \]
    By continuity, for each fixed $\vartheta\in\mathrm{O}(n)$ we have $\vartheta^{-1}x_k\to\vartheta^{-1}x$ as $k\to\infty$. Since $\phi$ is lower semicontinuous,
    \[
\phi(\vartheta^{-1}x)\leq\liminf_{k\to\infty}\phi(\vartheta^{-1}x_k).
    \]
    Hence,
      \[
0\leq \phi(\vartheta^{-1}x)+C\leq\liminf_{k\to\infty}\left(\phi(\vartheta^{-1}x_k)+C\right).
    \]
    Applying Fatou's lemma to the nonnegative, measurable functions $\vartheta\mapsto \phi(\vartheta^{-1}x_k)+C$, we obtain
     \[
\int_{\mathrm{O}(n)}\left(\phi(\vartheta^{-1}x)+C\right) d\eta(\vartheta)\leq\liminf_{k\to\infty}\int_{\mathrm{O}(n)}\left(\phi(\vartheta^{-1}x_k)+C\right)d\eta(\vartheta).
    \]
    Since $\eta$ is a probability measure on $\mathrm{O}(n)$, this gives us
    \[
\int_{\mathrm{O}(n)}\phi(\vartheta^{-1}x)\,d\eta(\vartheta)\leq\liminf_{k\to\infty}\int_{\mathrm{O}(n)}\phi(\vartheta^{-1}x_k)\,d\eta(\vartheta).
    \]
    In other words,
    \[
\widetilde{\phi}(x) \leq \liminf_{k\to\infty}\widetilde{\phi}(x_k).
    \]
    Thus, $\widetilde{\phi}$ is lower semicontinuous. 
\end{proof}

The next result shows that an $\alpha$-concave function and its reflectional hypo-symmetrization have the same $\alpha$-mean width when $\alpha\in(-\tfrac{2}{n-1},0]$. 

\begin{proposition}\label{prop:sym-preserve-mw}
    Fix $\alpha\in(-\tfrac{2}{n-1},0]$, and let $f\in\mathcal{C}_\alpha(\R^n)$. Then 
    \[
w_\alpha(f_{\rm sym}^{(\alpha)})=w_\alpha(f).
    \]
\end{proposition}

\begin{proof}
    We only prove the case $\alpha\in(-\tfrac{2}{n-1},0)$; the case $\alpha=0$ follows mutatis mutandis. Set $\psi:=\base_\alpha f$ and $f_{\rm sym}^{(\alpha)}=(1-\alpha \psi_{\rm sym})^{1/\alpha}$. By definition, $h_f^{(\alpha)}=\mathcal{L}(\base_\alpha f)=\mathcal{L}\psi$. Likewise, $h_{f_{\rm sym}^{(\alpha)}}^{(\alpha)}=\mathcal{L}\psi_{\rm sym}$. By Theorem \ref{thm:full-sequence-minkowski}, 
    \[
\mathcal{L}\psi_{\rm sym}=\int_{\mathrm{O}(n)}\mathcal{L}\psi(\vartheta^{-1}x)\,d\eta(\vartheta).
    \]
    Hence
    \[
h_{f_{\rm sym}^{(\alpha)}}^{(\alpha)}(x)=\int_{\mathrm{O}(n)}h_f^{(\alpha)}(\vartheta^{-1}x)\,d\eta(\vartheta).
    \]

    Since $\psi\in\mathrm{Conv}_{\rm c}(\R^n)$, there exists $y_0\in\R^n$ such that $\psi(y_0)<\infty$. Thus,
    \[
h_f^{(\alpha)}(x)=\mathcal{L}\psi(x)\geq\langle x,y_0\rangle-\psi(y_0)\geq-|x||y_0|-\psi(y_0).
    \]
    Set $g(x):=|y_0||x|+|\psi(y_0)|+1$. Then
    \[
h_f^{(\alpha)}(x)+g(x)\geq 1+|\psi(y_0)|-\psi(y_0)\geq 1.
    \]
    In particular, $h_f^{(\alpha)}+g\geq 0$. Since $\alpha\in(-\tfrac{2}{n-1},0)$, the function $(1+|x|)\rho_\alpha(x)$ is integrable on $\R^n$. This implies $g(x)\in L^1(\rho_\alpha(x)\,dx)$.

    Therefore, if $\alpha\in(-\tfrac{2}{n-1},0)$ then
    \begin{align*}
        w_\alpha(f_{\rm sym}^{(\alpha)})+\int_{\R^n}g(x)\rho_\alpha(x)\,dx &= \int_{\R^n}\left(h_{f_{\rm sym}^{(\alpha)}}^{(\alpha)}(x)+g(x)\right)\rho_\alpha(x)\,dx\\
        &=\int_{\R^n}\left(\int_{\mathrm{O}(n)}h_f^{(\alpha)}(\vartheta^{-1}x)\,d\eta(\vartheta)+g(x)\right)\rho_\alpha(x)\,dx\\
        &=\int_{\R^n}\int_{\mathrm{O}(n)}\left(h_f^{(\alpha)}(\vartheta^{-1}x)+g(x)\right)\rho_\alpha(x)\,d\eta(\vartheta)\,dx
    \end{align*}
   where we used the fact that $\eta$ is a probability measure on ${\rm O}(n)$. Since the integrand is nonnegative, by Tonelli's theorem we get
    \[
w_\alpha(f_{\rm sym}^{(\alpha)})+\int_{\R^n}g(x)\rho_\alpha(x)\,dx =\int_{\mathrm{O}(n)}\int_{\R^n}\left(h_f^{(\alpha)}(\vartheta^{-1}x)+g(x)\right)\rho_\alpha(x)\,dx\,d\eta(\vartheta).
    \]
     Now fix $\vartheta\in\mathrm{O}(n)$, and make the change of variables $z=\vartheta^{-1}x$. Since $\vartheta$ is an orthogonal transformation, we have $dx=dz$ and $|x|=|z|$. Hence $\rho_\alpha(x)=\rho_\alpha(z)$ and $g(x)=g(z)$. Therefore,
    \[
\int_{\R^n}\left(h_f^{(\alpha)}(\vartheta^{-1}x)+g(x)\right)\rho_\alpha(x)\,dx=\int_{\R^n}\left(h_f^{(\alpha)}(z)+g(z)\right)\rho_\alpha(z)\,dz.
    \]
    This no longer depends on $\vartheta$, so 
    \begin{align*}
        w_\alpha(f_{\rm sym}^{(\alpha)})+\int_{\R^n}g(x)\rho_\alpha(x)\,dx =\int_{\R^n}\left(h_f^{(\alpha)}(z)+g(z)\right)\rho_\alpha(z)\,dz=w_\alpha(f)+\int_{\R^n}g(z)\rho_\alpha(z)\,dz.
    \end{align*}
    Subtracting the finite quantity $\int_{\R^n}g\rho_\alpha$ from both sides, we finally obtain
    \[
w_\alpha(f_{\rm sym}^{(\alpha)})=w_\alpha(f).
    \]
\end{proof}

\subsection{Convergence to unconditional functions}

Klartag and Milman \cite[Lemma 2.3]{Klartag-Milman-2003} proved that any set in $\R^n$ can be transformed into an unconditional set (i.e., a set which is symmetric with respect to the vectors $\{u_1,\ldots,u_n\}$ in an orthonormal basis of $\R^n$) using $n$ Steiner symmetrizations. 
A function $f:\R^n\to(-\infty,+\infty]$ is \emph{unconditional} if for every $(x_1,\ldots,x_n)\in\R^n$, we have 
\[
f(x_1,\ldots,x_n)=f(|x_1|,\ldots,|x_n|).
\]
Lin \cite[Theorem 3.4]{Lin-2017} proved that any coercive convex function $\psi:\R^n\to(-\infty,+\infty]$ can be transformed into an unconditional function using $n$ Steiner symmetrizations. We prove an analogue for $\alpha$-Minkowski symmetrizations in the following

\begin{theorem}\label{unconditional}
Let $\alpha\in[-\infty,0]$. Every $\alpha$-concave function  $f\in\mathcal{C}_\alpha(\R^n)$  can be transformed into an unconditional function $\overline{f}\in\mathcal{C}_\alpha(\R^n)$ using $n$ $\alpha$-Minkowski symmetrizations.
\end{theorem}

To prove this result, we will use the following lemma, which is the analogue of \cite[Lemma 3.4]{Lin-2017} for conjugate Minkowski symmetrizations of convex functions.

\begin{lemma}\label{unconditional-lemma}
Let $u_1,u_2\in\Sp$ and $\langle u_1,u_2\rangle=0$. If $\psi\in\mathrm{Conv}_{\rm c}(\R^n)$ is symmetric about $u_1^\perp$, then $\tau_{u_2}^*\psi$ is symmetric about both $u_1^\perp$ and $u_2^\perp$. 
\end{lemma}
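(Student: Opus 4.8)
The plan is to show directly that $\tau_{u_2}^*\psi$ is invariant under the reflection $R_{u_1}$, and then invoke Lemma \ref{transforms-lemma}(v) for the symmetry about $u_2^\perp$. Recall that $\tau_{u_2}^*\psi=\tfrac12\mathcal{L}\psi+\tfrac12 R_{u_2}(\mathcal{L}\psi)$, so it suffices to analyze how $R_{u_1}$ acts on each of the two summands. The key observation is that since $\psi$ is symmetric about $u_1^\perp$, i.e. $R_{u_1}\psi=\psi$, Lemma \ref{transforms-lemma}(i) gives $R_{u_1}(\mathcal{L}\psi)=\mathcal{L}(R_{u_1}\psi)=\mathcal{L}\psi$; that is, $\mathcal{L}\psi$ is itself symmetric about $u_1^\perp$. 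So the first summand is $R_{u_1}$-invariant.

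For the second summand I would compute $R_{u_1}\big(R_{u_2}(\mathcal{L}\psi)\big)$. The heart of the matter is the commutation relation $R_{u_1}\circ R_{u_2}=R_{u_2}\circ R_{u_1}$ for reflections about two orthogonal hyperplanes $u_1^\perp$ and $u_2^\perp$. This follows because $R_{u_1}R_{u_2}(x)=x-2\langle x,u_1\rangle u_1-2\langle x,u_2\rangle u_2+4\langle x,u_2\rangle\langle u_2,u_1\rangle u_1$, and the orthogonality $\langle u_1,u_2\rangle=0$ kills the last term, yielding an expression symmetric in the roles of $u_1$ and $u_2$. Consequently, for any function $g$ on $\R^n$ one has $R_{u_1}(R_{u_2}g)=(g\circ R_{u_2}\circ R_{u_1})=(g\circ R_{u_1}\circ R_{u_2})=R_{u_2}(R_{u_1}g)$. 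Applying this with $g=\mathcal{L}\psi$ and then using the $R_{u_1}$-invariance of $\mathcal{L}\psi$ established above gives $R_{u_1}\big(R_{u_2}(\mathcal{L}\psi)\big)=R_{u_2}\big(R_{u_1}(\mathcal{L}\psi)\big)=R_{u_2}(\mathcal{L}\psi)$. So the second summand is also $R_{u_1}$-invariant.

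Combining the two computations, $R_{u_1}\big(\tau_{u_2}^*\psi\big)=\tfrac12 R_{u_1}(\mathcal{L}\psi)+\tfrac12 R_{u_1}\big(R_{u_2}(\mathcal{L}\psi)\big)=\tfrac12\mathcal{L}\psi+\tfrac12 R_{u_2}(\mathcal{L}\psi)=\tau_{u_2}^*\psi$, where I have used linearity of the reflection operator (it acts by precomposition with a linear map, so it distributes over the sum and scalars). This gives symmetry about $u_1^\perp$. Symmetry about $u_2^\perp$ is immediate from Lemma \ref{transforms-lemma}(v), which says $\tau_{u_2}^*\psi$ is always symmetric about $u_2^\perp$ regardless of $\psi$.

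I do not anticipate a serious obstacle here; the only point requiring care is the commutation of orthogonal reflections and the bookkeeping of which operator acts as precomposition versus as an operator on functions, so that linearity of $R_{u_1}$ over the sum $\tfrac12\mathcal{L}\psi+\tfrac12 R_{u_2}(\mathcal{L}\psi)$ is applied correctly. One should also note in passing that $\tau_{u_2}^*\psi\in{\rm Conv}_{\rm c}(\R^n)$ (as a convex combination of the coercive lower semicontinuous convex functions $\mathcal{L}\psi$ and $R_{u_2}(\mathcal{L}\psi)$), so that all the transform identities being invoked are legitimate, but this is already implicit in the setup of the conjugate Minkowski symmetral.
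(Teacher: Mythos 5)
Your proposal is correct and follows essentially the same route as the paper's proof: both establish the commutation $R_{u_1}R_{u_2}=R_{u_2}R_{u_1}$ from orthogonality, use Lemma \ref{transforms-lemma}(v) for symmetry about $u_2^\perp$, use Lemma \ref{transforms-lemma}(i) and the $u_1^\perp$-symmetry of $\psi$, and use linearity of $R_{u_1}$ on the sum. The only (cosmetic) difference is that you first observe $\mathcal{L}\psi$ is itself $R_{u_1}$-invariant and apply this to both summands, whereas the paper applies the commutation first and pushes $R_{u_1}$ through the Legendre transform onto $\psi$ at the end; the ingredients and the chain of reasoning are the same.
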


\begin{proof}
First note that since $\langle u_1,u_2\rangle=0$, for every $x\in\R^n$ we have
\begin{align*}
R_{u_2}R_{u_1}(x) &= R_{u_1}(x)-2\langle R_{u_1}(x),u_2\rangle u_2\\
&=x-2\langle x,u_1\rangle u_1-2(\langle x,u_2\rangle u_2-2\langle x,u_1\rangle\langle u_1,u_2\rangle u_2)\\
&=x-2(\langle x,u_1\rangle u_1+\langle x,u_2\rangle u_2).
\end{align*}
The last line is symmetric in $u_1$ and $u_2$, so swapping the roles of $u_1$ and $u_2$ in the previous computation we get  $R_{u_2}R_{u_1}=R_{u_1}R_{u_2}$. 

By Lemma \ref{transforms-lemma}(v), we know that $\tau_{u_2}^*\psi$ is symmetric about $u_2^\perp$, so we only need to prove that $\tau_{u_2}^*\psi$ is symmetric about $u_1^\perp$. Indeed, for every $x\in\R^n$,
\begin{align*}
R_{u_1}\tau_{u_2}^*\psi(x) &=R_{u_1}\left(\frac{1}{2}(\mathcal{L}\psi)(x)+\frac{1}{2}R_{u_2}(\mathcal{L}\psi)(x)\right)\\
&=\frac{1}{2}R_{u_1}(\mathcal{L}\psi)(x)+\frac{1}{2}R_{u_1}R_{u_2}(\mathcal{L}\psi)(x)\\
&=\frac{1}{2}R_{u_1}(\mathcal{L}\psi)(x)+\frac{1}{2}R_{u_2}R_{u_1}(\mathcal{L}\psi)(x)\\
&=\frac{1}{2}\mathcal{L}(R_{u_1}\psi)(x)+\frac{1}{2}R_{u_2}(\mathcal{L}(R_{u_1}\psi))(x)\\
&=\frac{1}{2}(\mathcal{L}\psi)(x)+\frac{1}{2}R_{u_2}(\mathcal{L}\psi)(x)\\
&=\tau_{u_2}^*\psi(x),
\end{align*}
which shows that $\tau_{u_2}^*\psi$ is symmetric about $u_1^\perp$. In the second line, we used the linearity of the reflection operator, in the third line we used $R_{u_1}R_{u_2}=R_{u_2}R_{u_1}$, in the fourth line we used Lemma \ref{transforms-lemma}(i), and in the fifth line we used the fact that $\psi$ is symmetric about $u_1^\perp$.
\end{proof}

From Lemma \ref{unconditional-lemma} we shall deduce the following result, which is the analogue of \cite[Lemma 3.4]{Lin-2017} for $\alpha$-Minkowski symmetrizations.

\begin{corollary}\label{unconditional-cor}
Let $u_1,u_2\in\Sp$ and $\langle u_1,u_2\rangle=0$. 
\begin{itemize}
\item[(i)] If $\psi\in\mathrm{Conv}_{\rm c}(\R^n)$ is symmetric about $u_1^\perp$, then $\tau_{u_2}\psi$ is symmetric about both $u_1^\perp$ and $u_2^\perp$. 

\item[(ii)] Given $\alpha\in[-\infty,0]$, 
if $f\in\mathcal{C}_\alpha(\R^n)$ is symmetric about $u_1^\perp$,  then $\tau_{u_2}^\alpha f$ is symmetric about both $u_1^\perp$ and $u_2^\perp$.

\end{itemize}
\end{corollary}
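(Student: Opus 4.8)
The plan is to deduce all three parts from results already established, using a single recurring device. Note first that the reflection operator $R_v$ commutes with the Legendre--Fenchel transform (Lemma~\ref{transforms-lemma}(i)), with $\base_\alpha$ (as used in Lemma~\ref{base-symmetral-lemma}, via $\base_\alpha R_v f=R_v\base_\alpha f$), and with the passage $g\mapsto\lev_{\geq t}g$ (Lemma~\ref{reflection-lemma}(iv)). Since each of $\mathcal{L}$, $\base_\alpha$ and the superlevel-set family is injective, it follows that a function is symmetric about $v^\perp$ if and only if its Legendre--Fenchel conjugate is (respectively, if and only if its base function is, respectively, if and only if each of its superlevel sets is $R_v$-invariant). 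With this dictionary, all three statements become transfers of the symmetry information already contained in Lemma~\ref{unconditional-lemma}, Lemma~\ref{base-symmetral-lemma} and Proposition~\ref{mainProp}(viii).

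For part (i), I would start from $R_{u_1}\psi=\psi$ and invoke Lemma~\ref{unconditional-lemma} to conclude that $\tau_{u_2}^*\psi$ is symmetric about both $u_1^\perp$ and $u_2^\perp$. Since $\tau_{u_2}\psi=\mathcal{L}(\tau_{u_2}^*\psi)$ by Lemma~\ref{transforms-lemma}(iii), applying $R_v\mathcal{L}=\mathcal{L}R_v$ with $v=u_1$ and $v=u_2$ transfers both symmetries to $\tau_{u_2}\psi$. For part (ii), I would set $\psi:=\base_\alpha f\in{\rm Conv}_{\rm c}(\R^n)$; then $R_{u_1}\psi=\base_\alpha(R_{u_1}f)=\base_\alpha f=\psi$, so part (i) applies and $\tau_{u_2}\psi$ is symmetric about both $u_1^\perp$ and $u_2^\perp$. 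By Lemma~\ref{base-symmetral-lemma} we have $\base_\alpha(\tau_{u_2}^\alpha f)=\tau_{u_2}\base_\alpha f=\tau_{u_2}\psi$, and since $\base_\alpha$ is injective and $R_v$-equivariant this pulls both symmetries back to $\tau_{u_2}^\alpha f$.

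For part (iii), I would first record the purely geometric fact that a convex body $K$ with $R_{u_1}K=K$ and $\langle u_1,u_2\rangle=0$ has $\tau_{u_2}K=\tfrac12 K+\tfrac12 R_{u_2}K$ symmetric about both $u_1^\perp$ and $u_2^\perp$: symmetry about $u_2^\perp$ is immediate since $R_{u_2}\tau_{u_2}K=\tfrac12 R_{u_2}K+\tfrac12 K$, while symmetry about $u_1^\perp$ follows from the identity $R_{u_1}R_{u_2}=R_{u_2}R_{u_1}$ (which was verified in the course of proving Lemma~\ref{unconditional-lemma}) together with $R_{u_1}K=K$. Now if $R_{u_1}f=f$, then each $\lev_{\geq t}f$ is $R_{u_1}$-invariant by Lemma~\ref{reflection-lemma}(iv), so applying the geometric fact to $K=\lev_{\geq t}f$ and using $\lev_{\geq t}\tau_{u_2}^{-\infty}f=\tau_{u_2}(\lev_{\geq t}f)$ from Proposition~\ref{mainProp}(viii) shows that every superlevel set of $\tau_{u_2}^{-\infty}f$ is symmetric about both $u_1^\perp$ and $u_2^\perp$; the layer cake decomposition then gives the same for $\tau_{u_2}^{-\infty}f$ itself.

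I do not anticipate a genuine obstacle: the mathematical content lies entirely in the lemmas already proved, and the task is bookkeeping — verifying in each of the three representations (conjugate, base function, superlevel sets) that symmetry of the representation is equivalent to symmetry of the function, which is precisely the $R_v$-equivariance together with injectivity of the relevant transform. The only mild point of care is that part (iii) relies on the commutativity $R_{u_1}R_{u_2}=R_{u_2}R_{u_1}$ for orthogonal $u_1,u_2$, but this is already available from the proof of Lemma~\ref{unconditional-lemma}.
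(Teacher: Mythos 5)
Your proposal is correct and follows essentially the same route as the paper: part (i) transfers the symmetry statement of Lemma~\ref{unconditional-lemma} through the Legendre--Fenchel transform via Lemma~\ref{transforms-lemma}, part (ii) reduces to (i) via $\base_\alpha$ and Lemma~\ref{base-symmetral-lemma}, and part (iii) works at the level of superlevel sets using $R_{u_1}R_{u_2}=R_{u_2}R_{u_1}$ and Proposition~\ref{mainProp}(viii). The only cosmetic difference is that you phrase (iii) as a separate convex-body fact and then apply it to each $\lev_{\geq t}f$, whereas the paper performs the equivalent computation directly on the superlevel sets; the content is identical.
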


\begin{proof}
(i) Since $\tau_{u_2}\psi$ is symmetric about $u_2^\perp$, we only need to show that it is symmetric about $u_1^\perp$. Indeed, for all $x\in\R^n$,
\begin{align*}
R_{u_1}\tau_{u_2}\psi(x) &= R_{u_1}\mathcal{L}(\tau_{u_2}^*\psi)(x)
=\mathcal{L}(R_{u_1}\tau_{u_2}^*\psi)(x)
=\mathcal{L}(\tau_{u_2}^*\psi)(x)
=\tau_{u_2}\psi(x).
\end{align*}
The first equality follows from Lemma \ref{transforms-lemma}(iii), the second from Lemma \ref{transforms-lemma}(i), and the third  follows from Lemma \ref{unconditional-lemma}. This proves that $\tau_{u_2}\psi$ is symmetric about $u_1^\perp$.

(ii) Next, let $\alpha\in(-\infty,0)$ and $f=(1-\alpha\psi)^{1/\alpha}\in\mathcal{C}_\alpha(\R^n)$, where $\psi=\base_\alpha f$. By Lemma \ref{lem:base-alpha-convc}, $\base_\alpha f\in\mathrm{Conv}_{\rm c}(\R^n)$. Note that $f$ is symmetric about $u^\perp$ if and only if $\base_\alpha f$ is symmetric about $u^\perp$. The result now follows from (i). The case $\alpha=0$ is handled similarly.

Finally, let $\alpha=-\infty$. For every $t>0$, we have
\begin{align*}
\lev_{\geq t}R_{u_1}\tau_{u_2}^{-\infty}f
&=R_{u_1}(\lev_{\geq t}\tau_{u_2}^{-\infty}f)
=R_{u_1}\left(\frac{1}{2}\lev_{\geq t}f+\frac{1}{2}\lev_{\geq t}R_{u_2}f\right)\\
&=\frac{1}{2}R_{u_1}(\lev_{\geq t}f)+\frac{1}{2}R_{u_1}(\lev_{\geq t}R_{u_2}f)
=\frac{1}{2}\lev_{\geq t}R_{u_1}f+\frac{1}{2}\lev_{\geq t}R_{u_1}R_{u_2}f\\
&=\frac{1}{2}\lev_{\geq t}R_{u_1}f+\frac{1}{2}\lev_{\geq t}R_{u_2}R_{u_1}f\\
&=\frac{1}{2}\lev_{\geq t}f+\frac{1}{2}\lev_{\geq t}R_{u_2}f
=\lev_{\geq t}\tau_{u_2}^{-\infty}f.
\end{align*}
Thus by the layer cake principle, 
\[
R_{u_1}\tau_{u_2}^{-\infty}f(x)=\int_0^\infty\mathbbm{1}_{\lev_{\geq t}R_{u_1}\tau_{u_2}^{-\infty}f}(x)\,dt
=\int_0^\infty\mathbbm{1}_{\lev_{\geq t}\tau_{u_2}^{-\infty}f}(x)\,dt=\tau_{u_2}^{-\infty}f(x)
\]
for all $x\in\R^n$.
\end{proof}

We can now prove Theorem \ref{unconditional}.

\begin{proof}[Proof of Theorem \ref{unconditional}]
Let $\alpha\in[-\infty,0]$ and let $\{u_1,\ldots,u_n\}$ be an orthonormal basis of $\R^n$. Applying Corollary \ref{unconditional-cor} inductively, we deduce that the iterated $\alpha$-Minkowski symmetral $\bigcirc_{i=1}^n\tau_{u_i}^\alpha f$ is symmetric about $u_i^\perp$ for $i=1,\ldots,n$. Therefore, $f$ can be transformed into the unconditional function $\overline{f}:=\bigcirc_{i=1}^n\tau_{u_i}^\alpha f$ using $n$ $\alpha$-Minkowski symmetrizations. Finally, note that $\overline{f}\in\mathcal{C}_\alpha(\R^n)$ by  Lemma \ref{closure-lemma}(ii).
\end{proof}

\section{Applications}\label{polytopes-section}

\subsection{Motivation}

A classical result in convex geometry states that among all convex bodies of a given mean width, a  Euclidean ball is hardest to approximate by inscribed polytopes with a given number of vertices. In this section, we present a functional analogue of this fact. The proof will use Minkowski symmetrizations of functions.

For convex bodies $K,P\in\mathcal{K}^n$ with $P\subset K$,  the \emph{mean width difference}  $\Delta_w:\mathcal{K}^n\times\mathcal{K}^n\to[0,\infty)$ is defined by 
\begin{equation}\label{MWdeviation}
\Delta_w(P,K)=w(K)-w(P). 
\end{equation} 
The mean width difference has applications to the approximation of convex bodies by polytopes. For some examples, we refer the reader to  \cite{BH-2022, BHK, FlorianMetric, Glasauer-Gruber, Hoehner-survey, Ludwig1999,McClure-Vitale-1975,Vitale1985} and the references therein.

Given $K\in\mathcal K^n$ and an integer $N\geq n+1$,  let $\mathscr P_N(K)$  denote the set of all polytopes contained in $K$  with at most $N$ vertices.  A solution $\widehat{P}_N$ to the optimization problem
\[
\inf_{P\in\mathscr P_N(K)}\Delta_w(P,K)
\]
is called a {\it best-approximating polytope} of $K$ with respect to the mean width difference. Best-approximating polytopes are guaranteed to exist by the Blaschke selection theorem and the continuity of the mean width functional with respect to the Hausdorff metric.

Next, we present the classical result which says that the Euclidean ball is hardest to approximate by inscribed polytopes in the mean width difference. 

\begin{theorem}\label{mainThm-old}
Fix integers $n\geq 2$ and $N\geq n+1$, and consider  the functional  $F_{N}:\mathcal K^n\to[0,\infty)$ defined by
\begin{equation}\label{ourfunctional}
F_{N}(K):=\inf_{P\in\mathscr P_N(K)}\frac{\Delta_w(P,K)}{w(K)}.
\end{equation}
Then $F_{N}$ attains its maximum at Euclidean balls. In other words, for every $K\in\mathcal{K}^n$ we have
\begin{equation}\label{mainresult}
F_{N}(K)\leq F_{N}(B_n).
\end{equation}
\end{theorem}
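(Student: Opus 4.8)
The plan is to use the fact that a Minkowski symmetrization can only make a convex body harder to approximate, and then to run a symmetrization sequence toward a ball. Write $M_N(K):=\sup_{P\in\mathscr P_N(K)}w(P)$, so that $F_N(K)=1-M_N(K)/w(K)$; the supremum is attained because best-approximating polytopes exist, and $0\le M_N(K)\le w(K)$. The key step is the inequality
\[
M_N(\tau_u K)\le M_N(K)\qquad\text{for all }K\in\mathcal K^n\text{ and all }u\in\Sp .
\]
To prove it I would take any $Q\in\mathscr P_N(\tau_u K)$ with vertices $q_1,\dots,q_N$ and, using $\tau_u K=\tfrac12 K+\tfrac12 R_u K$ together with the fact that $R_u$ is an involution, write $q_i=\tfrac12 a_i+\tfrac12 R_u c_i$ with $a_i,c_i\in K$. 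Setting $P_1=\conv\{a_1,\dots,a_N\}$ and $P_2=\conv\{c_1,\dots,c_N\}$, both members of $\mathscr P_N(K)$, one has $Q\subset\tfrac12 P_1+\tfrac12 R_u P_2$; then monotonicity and Minkowski linearity of the mean width, together with $w(R_u P_2)=w(P_2)$, give $w(Q)\le\tfrac12 w(P_1)+\tfrac12 w(P_2)\le M_N(K)$, and taking the supremum over $Q$ finishes this step.

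Since $w(\tau_u K)=w(K)$, the displayed inequality gives $F_N(\tau_u K)\ge F_N(K)$. I would then invoke the classical fact recalled in the introduction (see, e.g., \cite[\S 4.5.3]{hadwiger1957} or \cite[pp.~157--158]{G-Toth-book-2015}) that there is a sequence of Minkowski symmetrizations $K_0=K$, $K_{j+1}=\tau_{u_{j+1}}K_j$ converging in the Hausdorff metric to a Euclidean ball $\bar B$ with $w(\bar B)=w(K)$. Iterating the previous inequality yields $F_N(K)\le F_N(K_j)$ for every $j$.

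To pass to the limit I would check that $M_N$ is lower semicontinuous with respect to the Hausdorff metric; since $w$ is continuous and positive, this makes $F_N=1-M_N/w$ upper semicontinuous. Concretely, if $K_j\to L$ and $P=\conv\{v_1,\dots,v_N\}\subset L$ almost realizes $M_N(L)$, one perturbs the vertices slightly into $K_j$ to obtain $P_j\in\mathscr P_N(K_j)$ with $P_j\to P$, so $\liminf_j M_N(K_j)\ge w(P)$. Combining upper semicontinuity along the symmetrization sequence with the monotonicity above, $F_N(K)\le\inf_j F_N(K_j)\le\limsup_j F_N(K_j)\le F_N(\bar B)$. Finally, every quantity in \eqref{ourfunctional} transforms compatibly under rigid motions and positive dilations, so $F_N$ is invariant under them and $F_N(\bar B)=F_N(B_n)$, which is \eqref{mainresult}.

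The crux is the symmetrization inequality $M_N(\tau_u K)\le M_N(K)$, which is short once one picks the right pair of auxiliary polytopes $P_1,P_2$ (the naive choice $\tau_u P$ of a single polytope fails because it has too many vertices). The only mildly technical point is the Hausdorff semicontinuity needed to take the limit, which is routine.
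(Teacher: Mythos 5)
Your argument is correct, but it's worth being explicit that the paper never actually proves Theorem~\ref{mainThm-old}: it treats the statement as classical and refers to Schneider, Florian, and Bucur--Fragal\`a--Lamboley. What the paper \emph{does} prove in full detail is the functional analogue, Theorem~\ref{mainThm-new}, and the proof you have written is precisely the geometric shadow of that argument. Your decomposition $q_i=\tfrac12 a_i+\tfrac12 R_u c_i$ and the containment $Q\subset\tfrac12 P_1+\tfrac12 R_u P_2$ are the convex-body counterpart of Lemma~\ref{lem1} (which splits an inner linearization of $f\star g$ into an Asplund combination of inner linearizations of $f$ and $g$); your inequality $M_N(\tau_u K)\leq M_N(K)$ is the counterpart of Lemma~\ref{mainLemma}; and your semicontinuity-plus-limit step corresponds to Lemma~\ref{cont-lemma} together with the final paragraph of the proof of Theorem~\ref{mainThm-new}, where the paper uses hypo-convergence of successive Minkowski symmetrizations of $f$ in place of the Hausdorff convergence of $K_j$ to a ball that you invoke from Hadwiger's classical theorem. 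So your write-up does not merely recover a cited fact; it confirms that the paper's functional argument specializes cleanly to the geometric case, and in fact the geometric version is slightly simpler because the mean width of convex bodies is genuinely Hausdorff-continuous, whereas the functional mean width $w_0$ is only known to be lower semicontinuous (Lemma~\ref{MW-lower-semi-lem}).

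The one place where you should add a sentence is the lower semicontinuity of $M_N$. The perturbation-of-vertices step needs the Hausdorff limit to have nonempty interior: shrink $P$ slightly toward an interior point of $L$ so that its vertices lie in $\interior(L)$, note that for $j$ large any compact subset of $\interior(L)$ lies in $K_j$, and then use continuity and positive homogeneity of the mean width to send the shrinking parameter to zero. This is harmless in your application because the limit $\bar B$ is a ball, but stated for arbitrary $L\in\mathcal{K}^n$ the claim needs that caveat (which is automatic for convex bodies but worth flagging, since Hausdorff limits of convex bodies can be lower-dimensional). You also rightly avoid claiming uniqueness of the maximizer, which the paper notes is open for $n\geq 3$.
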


Theorem \ref{mainThm-old} may be viewed as an extension of a result of Schneider \cite{Schneider1971}, where, in particular, the planar result ($n=2$) was proved; in this case, the ball is the unique maximizer \cite{FlorianPrachar, Schneider1971} (see also \cite[p. 189]{FlorianExtremum}). Florian  established a planar version of Theorem \ref{mainThm-old} for convex polygons in arbitrary position \cite{AFlorian1992}, and remarked  that in the inscribed case, the extension from $n=2$ to general dimensions $n\geq 2$ is possible (see also \cite{FlorianExtremum}). Theorem \ref{mainThm-old} is also a special case of a  result of Bucur, Fragal\`a and Lamboley \cite[Theorem 2.1]{BucurFragalaLamboley} (see Section \ref{extremal-section}). To the best of our knowledge, for arbitrary dimensions $n\geq 3$ it is not known if Euclidean balls are the only maximizers in Theorem \ref{mainThm-old}. 

An analogous version of Theorem \ref{mainThm-old} was also proved by Schneider \cite{Schneider-1967} for circumscribed polytopes with a given number of facets. The construction and results in this subsection should be compared with those in the dual  outer approximation theory developed in \cite{Hoehner-Mussnig}. The present section treats the inner approximation problem corresponding to inscribed polytopes and finite sets of break points, whereas \cite{Hoehner-Mussnig} treats a dual outer approximation problem modeled on  supporting affine data.


\subsection{Inner (log-)linearizations}\label{inner-linearizations-subsection}

In the setting of convex functions, the analogue of a polytope inscribed in a convex body with $N$ vertices is called an inner linearization with $N$ break points. Inner linearizations  have found applications to convex optimization  and stochastic programming; see, for example,  \cite{Bertsekas2011, BL-stochastic-book}.  Let us now describe their   construction. Given   $\psi\in\mathrm{Conv}(\R^n)$  and   a finite point set $\mathbf{X}_N:=\{(x_i,y_i)\}_{i=1}^N\subset\epi(\psi)$ with $x_i\in\dom(\psi)$ and $y_i\geq\psi(x_i)$, consider  the convex hull of the vertical rays with these endpoints:
\begin{equation}\label{union-rays}
\textstyle \conv^{\uparrow}\mathbf{X}_N := \conv\left(\bigcup_{i=1}^N\{(x_i,y): y\geq y_i\}\right).
\end{equation}
Here the  superscript $\uparrow$ indicates that the points $\mathbf{X}_N$ lie in the epigraph of $\psi$. The function $p_{\mathbf{X}_N}$ whose epigraph is the set \eqref{union-rays} is called an \emph{inner linearization} of $\psi$. 

\begin{definition}
Given $\psi\in\mathrm{Conv}(\R^n)$ and a finite set of points $\mathbf{X}_N:=\{(x_1,y_1),\ldots,(x_N,y_N)\}\subset\epi(\psi)$, the \emph{inner linearization} $p_{\mathbf{X}_N}:\R^n\to(-\infty,+\infty]$ is defined by
\[
p_{\mathbf{X}_N}(x):=\begin{cases}
\textstyle\min\left\{\sum_{i=1}^N \lambda_i y_i : \substack{\sum_{i=1}^N\lambda_i=1,\,\lambda_1,\ldots,\lambda_N\geq 0,\\\sum_{i=1}^N\lambda_i x_i=x}\right\}, &\text{if }x\in\conv\{x_1,\ldots,x_N\}\\
+\infty, &\text{otherwise}.
\end{cases}
\]
\end{definition}
 In particular, $p_{\mathbf{X}_N}\geq\psi$ and $p_{\mathbf{X}_N}$  is the unique piecewise affine function with $\dom(p_{\mathbf{X}_N})=\conv\{x_1,\ldots,x_N\}$ and $\epi(p_{\mathbf{X}_N})=\conv^{\uparrow}\mathbf{X}_N$. Moreover, $p_{\mathbf{X}_N}\in\mathrm{Conv}_{\rm c}(\R^n)$. A point $x\in\dom(\psi)$ is called a \emph{break point} of $p_{\mathbf{X}_N}$ if $(x,p_{\mathbf X_N}(x))\in\R^n\times \R$ is an extreme point of $\epi(p_{\mathbf{X}_N})$. Note that if $\mathbf{X}_N\subset\epi(\psi)$ is finite, then the associated inner linearization is the greatest lower semicontinuous convex minorant of the extended-real-valued function that takes the prescribed values at the points of $\mathbf{X}_N$ and $+\infty$ everywhere else. For more background, we refer the reader to \cite{RockafellarBook,Rockafellar-Wets}. 

Given $\psi\in\mathrm{Conv}_{\rm c}(\R^n)$ and an integer $N\geq n+2$, let 
$\mathscr{C}_N(\psi)$ be the set of all inner linearizations of $\psi$ with at most $N$ break points.  For $f=e^{-\psi}\in\LC_{\rm c}(\R^n)$, we also set $\mathscr{P}_N(f):=\{e^{-p}:\,p\in\mathscr{C}_N(\psi)\}$. We refer to a  function  $q\in\mathscr{P}_N(f)$ as an \emph{inner log-linearization} of $f$  (see also \cite{PB-2020,Rinott}). Note that if $q=e^{-p}\in\mathscr{P}_N(f)$, then   $q\leq f$ and $q\in\LC_{\rm c}(\R^n)$. A point $x\in\supp(f)$ is called a \emph{break point} of $q$ if $x$ is a break point of $p$.

 Just as any  convex body $K\in\mathcal{K}^n$ may be approximated by an inscribed polytope $P\subset K$ with at most $N$ vertices in the mean width difference $w(K)-w(P)$, a convex function $\psi\in\mathrm{Conv}_{\rm c}(\R^n)$ may be approximated by an inner linearization $p\geq\psi$ with at most $N$ break points under the mean width difference $w_0(e^{-\psi})-w_0(e^{-p})$.  

Since the function $x\mapsto e^{-x}$ is a bijection from $\R$ to $(0,\infty)$, one may use inner linearizations of $\psi$ to define a functional version of the convex hull operation for a given set of points in the hypograph of $f=e^{-\psi}$. 

\begin{definition}\label{def-f-cvx-hull}
Given $\psi\in\mathrm{Conv}(\R^n)$ and a  set $S\subset\epi(\psi)$, let $\exp(-S):=\{(x,e^{-y})\in\R^n\times\R:\, (x,y)\in S\}\subset\hyp(f)$.   For $f=e^{-\psi}\in\LC(\R^n)$ and a finite set $\mathbf{Y}_N\subset\hyp(f)\cap(\R^n\times(0,\infty))$, set
\[
\conv_{\downarrow} \mathbf{Y}_N:=\exp\left(-\conv^{\uparrow}\left(-\log \mathbf{Y}_N\right)\right).
\]
where $-\log \mathbf{Y}_N:=\{(x,-\log y)\in\R^n\times\R:\,(x,y)\in \mathbf{Y}_N\}\subset\epi(\psi)$.     
\end{definition}

Geometrically, this construction means: (i) first map $\mathbf{Y}_N$ to its preimage in $\epi(\psi)$ via the function $(x,y)\mapsto (x,-\log y)$, then (ii) take the convex hull of the vertical rays of the resulting set, then (iii) apply the exponential map $(x,y)\mapsto (x,e^{-y})$ to this set. Its  image, contained in $\hyp(f)$, is $\conv_{\downarrow}\mathbf{Y}_N$. Inner log-linearizations have been  studied in the literature before under different names; see, for example,  \cite{Hoehner-Mussnig,Hoehner-Novaes,PB-2020,Rinott}. 

Note that for any finite set of points $\mathbf{Y}_N\subset\hyp(f)\cap(\R^n\times(0,\infty))$, the set $\conv_{\downarrow}\mathbf{Y}_N$ is well-defined.  In particular, if   $\mathbf{Y}_N:=\{(x_1,y_1),\ldots,(x_N,y_N)\}\subset \hyp(f)$, then we have 
\[
\conv_{\downarrow}\mathbf{Y}_N=\exp\left(-\conv\left(\textstyle{\bigcup_{i=1}^N}\left\{(x_i,-\log y):\, 0<y\leq y_i\right\}\right)\right).
\]

\noindent The construction of $\conv_{\downarrow}\mathbf{Y}_N$ is illustrated below for the function $f(x)=e^{-x^2}$.

 \begin{center}
     \includegraphics[scale=0.7]{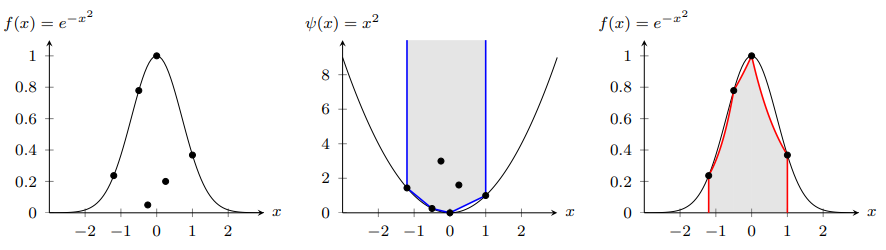}
 \end{center}

 \noindent{\bf\footnotesize Figure 1.} {\footnotesize In the left figure, a finite point set $\mathbf{Y}_N$ is contained in $\hyp(f)$. In the middle figure, the set $\mathbf{Y}_N$ has been mapped to its preimage $\mathbf{X}_N:=-\log\mathbf{Y}_N\subset\epi(\psi)$; the shaded  region there is $\conv^{\uparrow}\mathbf{X}_N$, and the blue curve in its  boundary is the graph of the corresponding inner linearization $p_{\mathbf{X}_N}$. In the right figure, $\conv^{\uparrow}\mathbf{X}_N$ has been mapped back to its image in $\hyp(f)$; the shaded region there is $\conv_{\downarrow}\mathbf{Y}_N$ and the red curve in its boundary is the graph of the corresponding inner log-linearization $\exp(-p_{\mathbf{X}_N})$, which has four break points.} 

 \subsubsection{Inner $\alpha$-linearizations}\label{rmk:alpha-remark-inner-alpha-linearization}
 
More generally, for any $\alpha\in(-\infty,0]$, one may define the set $\conv_\downarrow\mathbf{Y}_N$ with respect to an $\alpha$-concave function $f$ in an analogous way. This leads to the concept of ``inner $\alpha$-linearizations'' of an $\alpha$-concave function $f\in\mathcal{C}_\alpha(\R^n)$. The following definition is from \cite{Hoehner-Novaes}. 

\begin{definition}
    Let $\alpha\in(-\infty,0)$. For a convex function $\psi\in\mathrm{Conv}(\R^n)$ and a set $S\subset\epi(\psi)$, let
    \begin{align*}
(1-\alpha S)^{1/\alpha}&:=\left\{(x,(1-\alpha y)^{1/\alpha})\in\R^n\times\R:\,(x,y)\in S\right\}.
    \end{align*}
    For $f=(1-\alpha\psi)^{1/\alpha}\in\mathcal{C}_\alpha(\R^n)$ with $\psi=\base_\alpha f$ and a finite set $\mathbf{Y}_N\subset\hyp(f)\cap(\R^n\times(0,\infty))$, set
    \[
    \conv_\downarrow\mathbf{Y}_N:=\left[1-\alpha\conv^{\uparrow}\left(\frac{1-\mathbf{Y}_N^\alpha}{\alpha}\right)\right]^{1/\alpha}
    \]
    where $\frac{1-\mathbf{Y}_N^\alpha}{\alpha}:=\left\{(x,\frac{1-y^\alpha}{\alpha}):\,(x,y)\in \mathbf{Y}_N\right\}$.
\end{definition}
This construction has also been considered before in the literature; for more background, we refer the reader to \cite{Hoehner-Novaes,PB-2020,Rinott}. It is  also complementary to the construction of generalized outer linearizations studied in  \cite{Hoehner-Mussnig}, where the finite data are taken in the dual variable rather than as break points in the primal epigraph.

For $\alpha\in(-\infty,0)$ and $f=(1-\alpha\psi)^{1/\alpha}\in\mathcal{C}_\alpha(\R^n)$, let  $\mathscr{P}_{\alpha, N}(f):=\{(1-\alpha p)^{1/\alpha}:\,p\in\mathscr{C}_N(\psi)\}$. We call a function $q\in\mathscr{P}_{\alpha, N}(f)$ an \emph{inner $\alpha$-linearization} of $f$. Note that if $q\in\mathscr{P}_{\alpha,N}(f)$ then $q\leq f$ and $q\in\mathcal{C}_\alpha(\R^n)$. As before, a point $x\in\supp(f)$ is called a \emph{break point} of $q=(1-\alpha p)^{1/\alpha}\in\mathscr{P}_{\alpha,N}(f)$ if $x$ is a break point of $p\in\mathscr{C}_N(\psi)$. Taking the case $\alpha=0$ to be understood in the limiting sense, we set  $\mathscr{P}_{0,N}(f):=\mathscr{P}_N(f)$.

\subsection{An extremal property of the  reflectional hypo-symmetrization}\label{extremal-hypo-sec}

The main result of this section is a functional analogue of Theorem \ref{mainThm-old}:

\begin{theorem}\label{mainThm-new}
Let $\alpha\in(-\frac{2}{n-1},0]$. Fix integers $n\geq 1$ and $N\geq n+2$, and consider  the functional $G_N: \mathcal{C}_\alpha(\R^n)\to[0,\infty)$ defined by
\begin{equation}\label{ourfunctional-new}
G_N(f):=\sup_{q\in\mathscr P_{\alpha,N}(f)}w_\alpha(q).
\end{equation}
Then for every 
 $f\in\mathcal{C}_\alpha(\R^n)$, we have
\begin{equation}\label{mainresult-new}
G_{N}(f)\geq G_{N}(f_{\rm sym}^{(\alpha)}).
\end{equation}
\end{theorem}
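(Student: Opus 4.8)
The plan is to deduce Theorem~\ref{mainThm-new} from Corollary~\ref{convergence-hypo-symmetral} by establishing two facts about $G_N$. The first is the \emph{key inequality}
\[
G_N(\tau_u f)\le G_N(f)\qquad\text{for every }f\in\LC_{\rm c}(\R^n)\text{ and every }u\in\Sp,
\]
so that a single Minkowski symmetrization cannot increase $G_N$. The second is that $G_N$ is \emph{lower semicontinuous} on $\LC_{\rm c}(\R^n)$ with respect to hypo-convergence. Granting both, the theorem follows immediately: by Corollary~\ref{convergence-hypo-symmetral} there is a sequence $\{f_j\}$ of Minkowski symmetrizations of $f$ with $f_j\stackrel{\mathrm{hyp}}{\to}f_{\rm sym}$, and since each $f_j$ is obtained from $f$ by finitely many Minkowski symmetrizations, iterating the key inequality gives $G_N(f_j)\le G_N(f)$ for all $j$; lower semicontinuity then yields $G_N(f_{\rm sym})\le\liminf_{j}G_N(f_j)\le G_N(f)$.

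The heart of the proof is the key inequality, which I would prove by transporting the classical ``split the vertices'' argument (that a Euclidean ball has the smallest best-inscribed-polytope mean width among bodies of a given mean width) to the epigraph picture. Write $\psi=\base_0 f=-\log f$, so that $\base_0(\tau_u f)=\tau_u\psi$ by Lemma~\ref{base-symmetral-lemma} and, by \eqref{epi-symm-def}, $\epi(\tau_u\psi)=\tfrac12\epi(\psi)+\tfrac12\epi(R_u\psi)$, where $\epi(R_u\psi)=R_{\widetilde{H}}\epi(\psi)$ with $\widetilde{H}=u^{\perp}\times\R$ (Lemma~\ref{reflection-lemma}(v)). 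Fix any $q=e^{-p}\in\mathscr P_N(\tau_u f)$; thus $p$ is an inner linearization of $\tau_u\psi$ with break points $\{(x_j,y_j)\}_{j=1}^N\subset\epi(\tau_u\psi)$ and $\epi(p)=\conv^{\uparrow}\{(x_j,y_j)\}$. Each break point splits as $(x_j,y_j)=\tfrac12(a_j,b_j)+\tfrac12 R_{\widetilde{H}}(a_j',b_j')$ with $(a_j,b_j),(a_j',b_j')\in\epi(\psi)$. Set $\mathbf{A}=\{(a_j,b_j)\}_{j=1}^N$ and $\mathbf{A}'=\{(a_j',b_j')\}_{j=1}^N$ and put $p_1=p_{\mathbf{A}}$, $p_2=p_{\mathbf{A}'}$; these are inner linearizations of $\psi$ with at most $N$ break points, so $q_1=e^{-p_1}$ and $q_2=e^{-p_2}$ lie in $\mathscr P_N(f)$. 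Since $\epi(p_1)=\conv^{\uparrow}\mathbf{A}$, $\epi(p_2)=\conv^{\uparrow}\mathbf{A}'$ and $\epi(R_u p_2)=R_{\widetilde{H}}\epi(p_2)$, the convex and upward-closed set
\[
\epi\!\left(\tfrac12\sq p_1\,\square\,\tfrac12\sq R_u p_2\right)=\tfrac12\epi(p_1)+\tfrac12 R_{\widetilde{H}}\epi(p_2)
\]
contains every point $\tfrac12(a_j,b_j)+\tfrac12 R_{\widetilde{H}}(a_j',b_j')=(x_j,y_j)$, hence contains $\conv^{\uparrow}\{(x_j,y_j)\}=\epi(p)$. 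Therefore $p\ge\tfrac12\sq p_1\,\square\,\tfrac12\sq R_u p_2$, that is, $q\le\tfrac12\cdot q_1\star\tfrac12\cdot R_u q_2$. Using monotonicity of $w_0$ (immediate from the order-reversing property in Lemma~\ref{conjugate-properties}(ii)), its linearity \eqref{linearity}, and its invariance under the orthogonal map $R_u$, I then obtain
\[
w_0(q)\le w_0\!\left(\tfrac12\cdot q_1\star\tfrac12\cdot R_u q_2\right)=\tfrac12 w_0(q_1)+\tfrac12 w_0(q_2)\le\max\{w_0(q_1),w_0(q_2)\}\le G_N(f),
\]
and taking the supremum over $q\in\mathscr P_N(\tau_u f)$ proves the key inequality.

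For the lower semicontinuity of $G_N$, suppose $f_m\stackrel{\mathrm{hyp}}{\to}g$ in $\LC_{\rm c}(\R^n)$, equivalently $-\log f_m\stackrel{\mathrm{epi}}{\to}-\log g$, so the epigraphs converge in the Painlev\'e--Kuratowski sense. Fix $\varepsilon>0$ and choose $q=e^{-p}\in\mathscr P_N(g)$ with $w_0(q)\ge G_N(g)-\varepsilon$, with break points $\mathbf{X}=\{(x_i,y_i)\}\subset\epi(-\log g)$. The inner-limit inclusion for the converging epigraphs provides points $(x_i^m,y_i^m)\in\epi(-\log f_m)$ with $(x_i^m,y_i^m)\to(x_i,y_i)$; setting $\mathbf{X}_m=\{(x_i^m,y_i^m)\}$ and $q_m=e^{-p_{\mathbf{X}_m}}\in\mathscr P_N(f_m)$, the convergence $\mathbf{X}_m\to\mathbf{X}$ forces $\conv^{\uparrow}\mathbf{X}_m\to\conv^{\uparrow}\mathbf{X}$, i.e.\ $q_m\stackrel{\mathrm{hyp}}{\to}q$. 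By Lemma~\ref{MW-lower-semi-lem} the functional $w_0$ is lower semicontinuous for hypo-convergence, so
\[
\liminf_{m\to\infty}G_N(f_m)\ge\liminf_{m\to\infty}w_0(q_m)\ge w_0(q)\ge G_N(g)-\varepsilon,
\]
and letting $\varepsilon\to0$ gives $\liminf_m G_N(f_m)\ge G_N(g)$.

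The step I expect to be the main obstacle is the key inequality. The point is that, unlike the $(-\infty)$-Minkowski symmetral, the $\alpha=0$ symmetral is governed by an infimal convolution, so the superlevel sets and break-point structure of $\tau_u f$ satisfy only the one-sided inclusion \eqref{contain}, not an equality; what rescues the argument is that on the level of \emph{epigraphs} the symmetral is a genuine Minkowski average (identity \eqref{epi-symm-def}), and this is exactly what allows the $N$ break points of an approximant of $\tau_u f$ to be split into $N$ break points of each of two approximants of $f$ whose symmetrized Asplund average still dominates $q$. The remaining technical points — that the epigraph identity $\epi(\varphi\square\chi)=\epi(\varphi)+\epi(\chi)$ applies to the polyhedral functions involved, and that epi-convergence supplies break points of $f_m$ converging to those of the limit — are routine.
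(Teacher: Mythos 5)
Your proof is correct and takes essentially the same route as the paper: the key inequality $G_N(\tau_u f)\le G_N(f)$ is obtained by the same break-point splitting via the identity $\epi(\tau_u\psi)=\tfrac12\epi(\psi)+\tfrac12 R_{\widetilde H}\epi(\psi)$ (the paper isolates this step as Lemma \ref{lem1} for a general Asplund sum $f\star g$ and then specializes, with a small rescaling of the resulting inner linearizations, whereas you split directly), and the conclusion then follows from Corollary \ref{convergence-hypo-symmetral} together with lower semicontinuity of $G_N$. Your explicit approximating-sequence argument for the lower semicontinuity of $G_N$ is in fact more careful than the paper's Lemma \ref{cont-lemma}, which simply appeals to ``the supremum of lower semicontinuous functions is lower semicontinuous'' --- a principle that strictly requires a fixed index set, whereas $\mathscr P_N(f)$ varies with $f$; the inner-limit construction you give is the clean way to close that step.
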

The proof will use the lower semicontinuity of the $\alpha$-mean width, which is why we restrict $\alpha$ to lie in the interval $(-\tfrac{2}{n-1},0]$. We will present the proof of  the log-concave case $\alpha=0$ in full detail. The proof for the case $\alpha\in(-2/(n-1),0)$ follows mutatis mutandis after replacing the usual Asplund sum operations $\star$ and $\cdot$ by $\star_\alpha$ and $\cdot_\alpha$, using the corresponding properties of these operations with convex base functions, and mapping inner linearizations of $\psi$ to inner $\alpha$-linearizations of $f=(1-\alpha\psi)^{1/\alpha}$ instead of inner log-linearizations of $f=e^{-\psi}$.

\vspace{1mm}

The main step in the proof of Theorem \ref{mainThm-new} is the following 

\begin{lemma}\label{lem1}
Let $n,N\in\mathbb{N}$ be such that $N\geq n+2$. Given $f,g\in\LC_{\rm c}(\R^n)$, let $p_{f\star g}\in\mathscr{P}_N(f\star g)$. Then there exists $p_f\in\mathscr{P}_N(f)$ and $p_g\in\mathscr{P}_N(g)$ such that $p_{f\star g}\leq p_f\star p_g$.
\end{lemma}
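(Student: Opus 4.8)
The plan is to work on the level of base functions (here $\psi = \base_0 f = -\log f$ and $\varphi = \base_0 g = -\log g$), where the Asplund sum $f \star g$ becomes the infimal convolution $\psi \square \varphi$ and an inner log-linearization corresponds to an inner linearization of the convex base function. So the statement to prove is the convex-analytic one: given $p \in \mathscr{C}_N(\psi \square \varphi)$ (an inner linearization with at most $N$ break points), produce $p_\psi \in \mathscr{C}_N(\psi)$ and $p_\varphi \in \mathscr{C}_N(\varphi)$ with $p_\psi \square p_\varphi \le p$. Note $p_{f\star g} \le p_f \star p_g$ in the log-concave world is exactly $p \ge p_\psi \square p_\varphi$ on the convex side (the inequality flips under $x \mapsto e^{-x}$), so this is the correct translation.

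The key geometric idea is the identity $\epi(\psi \square \varphi) = \epi(\psi) + \epi(\varphi)$ together with $\epi(p) = \conv^\uparrow \mathbf{X}_N^{\psi\square\varphi}$ for the break points $\mathbf{X}_N := \{(x_i, y_i)\}_{i=1}^N$ of $p$. Since each $(x_i, y_i) \in \epi(\psi \square \varphi) = \epi(\psi) + \epi(\varphi)$, I can write $(x_i, y_i) = (a_i, s_i) + (b_i, t_i)$ with $(a_i, s_i) \in \epi(\psi)$ and $(b_i, t_i) \in \epi(\varphi)$, i.e. $x_i = a_i + b_i$, $y_i = s_i + t_i$, $\psi(a_i) \le s_i$, $\varphi(b_i) \le t_i$. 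Now set $p_\psi := p_{\mathbf{A}_N^\psi}$ with $\mathbf{A}_N^\psi := \{(a_i, s_i)\}_{i=1}^N \subset \epi(\psi)$ and $p_\varphi := p_{\mathbf{B}_N^\varphi}$ with $\mathbf{B}_N^\varphi := \{(b_i, t_i)\}_{i=1}^N \subset \epi(\varphi)$. Each of these is an inner linearization with at most $N$ break points (the break points are among the $a_i$, resp. the $b_i$), so $p_\psi \in \mathscr{C}_N(\psi)$ and $p_\varphi \in \mathscr{C}_N(\varphi)$. It remains to check $p_\psi \square p_\varphi \le p$. By the epigraph identity for infimal convolution applied to the inner linearizations, $\epi(p_\psi \square p_\varphi) = \epi(p_\psi) + \epi(p_\varphi) = \conv^\uparrow \mathbf{A}_N^\psi + \conv^\uparrow \mathbf{B}_N^\varphi$. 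Because the Minkowski sum of convex hulls of two point sets is the convex hull of the pairwise sums, and vertical rays add to vertical rays, this sum contains $\conv^\uparrow \{(a_i + b_i,\, s_i + t_i)\}_{i=1}^N = \conv^\uparrow \mathbf{X}_N = \epi(p)$. Hence $\epi(p_\psi \square p_\varphi) \supseteq \epi(p)$, which is exactly $p_\psi \square p_\varphi \le p$.

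Translating back via $x \mapsto e^{-x}$: with $p_f := e^{-p_\psi}$ and $p_g := e^{-p_\varphi}$ we get $p_f \in \mathscr{P}_N(f)$, $p_g \in \mathscr{P}_N(g)$, and $p_f \star p_g = e^{-(p_\psi \square p_\varphi)} \ge e^{-p} = p_{f \star g}$, as desired. I should also record that $p_\psi, p_\varphi \in {\rm Conv}_{\rm c}(\R^n)$ (hence $p_f, p_g \in \LC_{\rm c}(\R^n)$), which is automatic from the definition of an inner linearization of a function whose effective domain is a polytope.

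The main obstacle I anticipate is bookkeeping around the break-point count: a priori the inner linearization $p$ of $\psi \square \varphi$ may be presented using more than $N$ points $(x_i, y_i)$, but only at most $N$ of them are genuine break points (extreme points of $\epi(p)$). I want to start from the break-point representation, so that $\mathbf{X}_N$ really has $\le N$ elements; then $\mathbf{A}_N^\psi, \mathbf{B}_N^\varphi$ also have $\le N$ elements and the resulting inner linearizations have $\le N$ break points. A secondary technical point is justifying $\conv^\uparrow \mathbf{A} + \conv^\uparrow \mathbf{B} \supseteq \conv^\uparrow\{(a_i+b_i, s_i+t_i)\}$: this follows because for any $\lambda \in \Delta^{N-1}$, $\sum_i \lambda_i (a_i + b_i, s_i + t_i + r_i) = \sum_i \lambda_i(a_i, s_i + r_i') + \sum_i \lambda_i (b_i, t_i + r_i'')$ with $r_i = r_i' + r_i' \ge 0$ split into nonnegative parts, and the two summands lie in $\conv^\uparrow \mathbf{A}$ and $\conv^\uparrow \mathbf{B}$ respectively — a routine but necessary check. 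Everything else is a direct application of the epigraph identities already recorded in the preliminaries.
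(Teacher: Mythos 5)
Your proposal is correct and follows essentially the same route as the paper: pass to the base functions, use $\epi(\varphi\square\psi)=\epi(\varphi)+\epi(\psi)$ to decompose each defining point $(x_i,y_i)$ as $(a_i,s_i)+(b_i,t_i)$, build inner linearizations from $\{(a_i,s_i)\}$ and $\{(b_i,t_i)\}$, and verify the epigraph inclusion before exponentiating back. The one place you are slightly more careful than the paper is in explicitly tracking the nonnegative vertical slack $r_i\geq 0$ when showing $\conv^{\uparrow}\mathbf{A}+\conv^{\uparrow}\mathbf{B}\supseteq\conv^{\uparrow}\{(a_i+b_i,s_i+t_i)\}$; the paper writes a generic point of $P_{\varphi\square\psi}$ as a convex combination of the $(v_j,t_j)$ alone, tacitly using that both sides are closed under upward vertical translation, so the extra care you take is warranted but not a substantive departure.
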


\begin{remark}
Let $\psi\in\mathrm{Conv}_{\rm c}(\R^n)$ and let $\mathbf{X}_N\subset\epi(\psi)$ be finite. If $\mathbf{X}_N'$ denotes the set of those points of $\mathbf{X}_N$ whose vertical rays contain the extreme points of $\epi(p_{\mathbf{X}_N})$, then $p_{\mathbf{X}_N}=p_{\mathbf{X}_N'}$. 
Indeed, deleting from a finite vertical ray representation the generators that are not extreme points of the resulting epigraph does not change the closed convex hull of the vertical rays. Thus, an inner linearization is completely determined by its break points. 
\end{remark}

\begin{remark}
In the proof below, we will also use the following consequence: given a convex function $\psi\in\mathrm{Conv}_{\rm c}(\R^n)$ and an inner linearization $\ell\in\mathscr{C}_N(\psi)$ with at most $N$ break points, it is possible to choose a set $\mathbf{X}_N$ consisting of exactly $N$ points in $\epi(\psi)$ such that $p_{\mathbf{X}_N}=\ell$. Indeed, suppose $\ell$ has $m\leq N$ break points. Then there exists a finite set $\mathbf{X}_m=\{(x_i,y_i)\}_{i=1}^m\subset\epi(\psi)$ such that $\epi(\ell)=\conv^{\uparrow}\mathbf{X}_m$. If $m=N$, then there is nothing to prove, so assume that $m<N$. Choose an index $i\in\{1,\ldots,m\}$, say $i=1$, and add $N-m$ further points on the same vertical ray, e.g., $(x_1,y_1+1),\ldots,(x_1,y_1+N-m)$. These extra points are redundant, because their vertical rays are contained in $\{(x_1,t):\,t\geq y_1\}\subset \conv^{\uparrow}\mathbf{X}_m$. Thus,  enlarging $\mathbf{X}_m$ in this way does not change $\conv^{\uparrow}\mathbf{X}_m$ and hence does not change $\ell$.
\end{remark}

\begin{proof}[Proof of Lemma \ref{lem1}]\label{lem1-pf}
 Let $f=e^{-\varphi}$ and $g=e^{-\psi}$, where $\varphi,\psi\in\mathrm{Conv}_{\rm c}(\R^n)$ are given. Let  $(v_1,t_1),\ldots,(v_N,t_N)\in\epi(\varphi\square\psi)$, where $v_1,\ldots,v_N\in\dom(\varphi\square\psi)$ and $t_i\geq  (\varphi\square\psi)(v_i)$ for every $i$. Set  $P_{\varphi\square\psi}:=\conv^{\uparrow}\{(v_i,t_i)\}_{i=1}^N$. Define the inner linearization  $\phi\in\mathscr{C}_N(\varphi\square\psi)$ by $\epi(\phi)=P_{\varphi\square\psi}$ and the inner log-linearization $p_{f\star g}\in\mathscr{P}_N(f\star g)$ by $p_{f\star g}=e^{-\phi}$. Note that $f\star g=e^{-(\varphi\square\psi)}$. Since $\epi(\varphi\square\psi)=\epi(\varphi)+\epi(\psi)$, by definition of the Minkowski sum of sets, for each $i\in\{1,\ldots,N\}$ there exists $(a_i,r_i)\in\epi(\varphi)$ and $(b_i,s_i)\in\epi(\psi)$ such that $(v_i,t_i)=(a_i,r_i)+(b_i,s_i)$, where $a_i\in\dom(\varphi),b_i\in\dom(\psi),r_i\geq\varphi(a_i)$ and $s_i\geq\psi(b_i)$. Set
 \begin{align*}
     Q_{\varphi}&:=\conv^{\uparrow}\{(a_1,r_1),\ldots,(a_N,r_N)\}\\
     R_{\psi}&:=\conv^{\uparrow}\{(b_1,s_1),\ldots,(b_N,s_N)\},
 \end{align*}
 and define the functions $q_{\varphi}$ and $r_{\psi}$ by $\epi(q_{\varphi})=Q_{\varphi}$ and $\epi(r_{\psi})=R_{\psi}$, respectively. 
 By construction, $q_{\varphi}\in\mathscr{C}_N(\varphi)$ and $r_{\psi}\in\mathscr{C}_N(\psi)$.  
 
 Let $(x,t)\in P_{\varphi\square\psi}$. By \eqref{union-rays}, there exist  $\lambda_1,\ldots,\lambda_k\in[0,1]$ with $\sum_{i=1}^k\lambda_i=1$ such that 
 \begin{equation}\label{inclusion1}
(x,t)=\sum_{j=1}^k\lambda_j(v_j,t_j)
=\left(\sum_{j=1}^k\lambda_j a_j,\sum_{j=1}^k\lambda_j r_j\right)+\left(\sum_{j=1}^k\lambda_j b_j,\sum_{j=1}^k\lambda_j s_j\right).
 \end{equation}
Since $\varphi$ is convex and $r_j\geq\varphi(a_j)$, we have
\[
\varphi\left(\sum_{j=1}^k\lambda_j a_j\right)\leq \sum_{j=1}^k\lambda_j\varphi(a_j)\leq \sum_{j=1}^k\lambda_j r_j
\]
which implies $(\sum_j \lambda_j a_j,\sum_j\lambda_j r_j)\in\epi(q_\varphi)$. Similarly, $(\sum_j \lambda_j b_j,\sum_j\lambda_j s_j)\in\epi(r_\psi)$. Thus, by \eqref{inclusion1} we deduce that  $(x,t)\in\epi(q_\varphi)+\epi(r_\psi)=\epi(q_\varphi\square r_\psi)$. Since $(x,t)\in P_{\varphi\square\psi}$ was arbitrary, we have 
\begin{equation}\label{epi-inclusion}
\epi(\phi)= P_{\varphi\square\psi}\subset Q_\varphi+R_\psi=\epi(q_\varphi\square r_\psi).
\end{equation}
Now consider  $p_f:=e^{-q_\varphi}\in\mathscr{P}_N(f)$ and $p_g:=e^{-r_\psi}\in\mathscr{P}_N(g)$. The inclusion \eqref{epi-inclusion} yields $\hyp(p_{f\star g})\subset\hyp(p_f\star p_g)$, which is equivalent to $p_{f\star g}\leq p_f\star p_g$. 
\end{proof}

The next result shows that a Minkowski symmetrization decreases the functional $G_{N}$.
 
\begin{lemma}\label{mainLemma}
Let $n,N\in\mathbb{N}$ be such that $N\geq n+2$. Let $f\in\LC_{\rm c}(\R^n)$ and $u\in\mathbb{S}^{n-1}$. For every  $q\in\mathscr{P}_N(\tau_u  f)$, there exists  $\widetilde{q}\in\mathscr{P}_N(f)$ such that $w_0(\widetilde{q})\geq w_0(q)$. Therefore, $G_{N}(f)\geq G_{N}(\tau_u  f)$. 
\end{lemma}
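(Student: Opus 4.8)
The plan is to deduce this from Lemma~\ref{lem1} together with the linearity \eqref{linearity} and the monotonicity of the mean width $w_0$, after disposing of the two scalar multiplications and the reflection by elementary bookkeeping. First I would fix $p_f\in\mathscr{P}_N(\tau_u f)$, write $\tau_u f=\tfrac12\cdot f\star\tfrac12\cdot R_u f$, and note that $\tfrac12\cdot f$ and $\tfrac12\cdot R_u f$ again belong to $\LC_{\rm c}(\R^n)$, by Lemma~\ref{closure-lemma}(i) and the fact that $\lambda\cdot(\cdot)$ preserves $\LC_{\rm c}(\R^n)$. Then Lemma~\ref{lem1}, applied with the two summands $\tfrac12\cdot f$ and $\tfrac12\cdot R_u f$, produces $p_1\in\mathscr{P}_N(\tfrac12\cdot f)$ and $p_2\in\mathscr{P}_N(\tfrac12\cdot R_u f)$ with $p_f\leq p_1\star p_2$.

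Next I would transport $p_1$ and $p_2$ back to inner log-linearizations of $f$ itself. Since $\base_0(\tfrac12\cdot g)=\tfrac12\sq\base_0 g$ and a direct computation gives $\epi(\tfrac12\sq\psi)=\tfrac12\,\epi(\psi)$, the inner log-linearizations of $\tfrac12\cdot g$ are exactly the functions $\tfrac12\cdot q$ with $q$ an inner log-linearization of $g$; moreover the $\tfrac12$-scaling of an epigraph is a bijection carrying extreme points to extreme points, so the number of break points is preserved. Hence $p_1=\tfrac12\cdot q_f$ with $q_f\in\mathscr{P}_N(f)$, and $p_2=\tfrac12\cdot q_g$ with $q_g\in\mathscr{P}_N(R_u f)$. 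Similarly, using $\epi(R_u p)=R_{\widetilde{H}}\,\epi(p)$ from Lemma~\ref{reflection-lemma}(v) (again a bijection on epigraphs that fixes the vertical-ray structure) together with $R_u\circ R_u=\mathrm{id}$, the map $q\mapsto R_u q$ carries $\mathscr{P}_N(R_u f)$ onto $\mathscr{P}_N(f)$; in particular $R_u q_g\in\mathscr{P}_N(f)$ and $w_0(R_u q_g)=w_0(q_g)$ by the orthogonal invariance of $w_0$.

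Finally I would combine these facts. By monotonicity of $w_0$ and the linearity \eqref{linearity} of the mean width,
\[
w_0(p_f)\leq w_0(p_1\star p_2)=\tfrac12 w_0(q_f)+\tfrac12 w_0(q_g)=\tfrac12 w_0(q_f)+\tfrac12 w_0(R_u q_g).
\]
Choosing $q_{p_f}$ to be whichever of $q_f$ and $R_u q_g$ has the larger mean width yields $q_{p_f}\in\mathscr{P}_N(f)$ with $w_0(q_{p_f})\geq w_0(p_f)$, and taking the supremum over $p_f\in\mathscr{P}_N(\tau_u f)$ gives $G_N(\tau_u f)\leq G_N(f)$.

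The only substantial ingredient is Lemma~\ref{lem1}; the rest is bookkeeping, and the fiddly parts are the homothety/reflection transfers just described and the monotonicity of $w_0$ on $\LC_{\rm c}(\R^n)$. For the latter, since the superlevel-set formula is unavailable at $\alpha=0$, one argues that $p_f\leq p_1\star p_2$ forces $\base_0 p_f\geq\base_0(p_1\star p_2)$, hence $h_{p_f}\leq h_{p_1\star p_2}$ by the order-reversal of the Legendre transform (Lemma~\ref{conjugate-properties}(ii)), and then integrates against $\gamma_n$ in the formula of Definition~\ref{alpha-mw-def}. One should also check along the way that every function appearing ($p_1$, $p_2$, $p_1\star p_2$, $q_f$, $R_u q_g$) lies in $\LC_{\rm c}(\R^n)$ — which holds because inner log-linearizations have compact support and $\LC_{\rm c}(\R^n)$ is closed under $\star$ and $\lambda\cdot(\cdot)$ — so that the linearity \eqref{linearity} is genuinely applicable.
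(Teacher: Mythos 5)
Your proof is correct and follows essentially the same route as the paper: apply Lemma~\ref{lem1} to the decomposition $\tau_u f=\tfrac12\cdot f\star\tfrac12\cdot R_u f$, rescale the resulting inner log-linearizations by $2\cdot(\cdot)$ to transport them to $\mathscr{P}_N(f)$ and $\mathscr{P}_N(R_u f)$, use linearity and orthogonal invariance of $w_0$ to split the mean width, and take the larger of the two candidates. The additional bookkeeping you supply (that scaling and reflection of epigraphs are bijections preserving the vertical-ray/break-point structure, that monotonicity of $w_0$ follows from order-reversal of the Legendre transform, and that all intermediate functions remain in $\LC_{\rm c}(\R^n)$) makes explicit steps the paper leaves implicit.
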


\begin{proof}
Let $q\in\mathscr{P}_N(\tau_u  f)$. By Lemma \ref{lem1}, there exist $q_1\in\mathscr{P}_N(\frac{1}{2}\cdot f)$ and $q_2\in\mathscr{P}_N(\frac{1}{2}\cdot R_u f)$ such that $q\leq q_1\star q_2$. Thus, for every $x\in\R^n$ we have $(2\cdot q_1)(x)=q_1^2(\frac{x}{2})\leq f(x)$ and $(2\cdot q_2)(x)=q_2^2(\frac{x}{2})\leq R_u f(x)$. It follows that $2\cdot q_1\in\mathscr{P}_N(f)$ and $2\cdot q_2\in\mathscr{P}_N(R_u f)$. Hence, by the monotonicity of the mean width and its linearity with respect to the Asplund sum, as well as its invariance under orthogonal transformations, we obtain
\begin{align*}
    w_0(q) &\leq w_0(q_1\star q_2)=\frac{1}{2}w_0(2\cdot q_1)+\frac{1}{2}w_0(2\cdot q_2)
    =\frac{1}{2}w_0(2\cdot q_1)+\frac{1}{2}w_0(R_u(2\cdot q_2)).
\end{align*}
Therefore, at least one of $w_0(2\cdot q_1)$ and $w_0(R_u(2\cdot q_2))$ must be greater than or equal to $w_0(q)$, so we define
\[
  \widetilde{q} :=
  \begin{cases}
     2\cdot q_1 & \text{if}\quad w_0(q_1)\geq w_0(q_2); \\
    R_u(2\cdot q_2) & \text{if} \quad w_0(q_1)< w_0(q_2).
  \end{cases}
\]
Since $q\in\mathscr{P}_N(\tau_u  f)$ was arbitrary, we deduce that
\[
G_N(\tau_u f)=\sup_{q\in\mathscr{P}_N(\tau_u  f)}w_0(q)\leq \sup_{\widetilde{q}\in\mathscr{P}_N(f)}w_0(\widetilde{q})=G_N(f).
\]
\end{proof}

\begin{lemma}\label{cont-lemma}
Let $n,N\in\mathbb{N}$ be such that $N\geq n+2$. Then $G_{N}:\LC_{\rm c}(\R^n)\to[0,\infty)$ is a lower semicontinuous functional of $f\in\LC_{\rm c}(\R^n)$ with respect to the topology of hypo-convergence.
\end{lemma}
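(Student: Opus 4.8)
The plan is to prove sequential lower semicontinuity directly, by transferring a near-optimal inner log-linearization of the limit function to the members of the sequence. Suppose $f_j\stackrel{\text{hyp}}{\to}f$ in $\LC_{\rm c}(\R^n)$, write $f_j=e^{-\psi_j}$ and $f=e^{-\psi}$ so that $\psi_j\stackrel{\text{epi}}{\to}\psi$, and aim to show $\liminf_{j\to\infty}G_N(f_j)\ge G_N(f)$. By monotonicity of the mean width with respect to the pointwise order (used already in the proof of Lemma~\ref{mainLemma}) we have $G_N(f)\le w_0(f)<\infty$, so it suffices to fix $\varepsilon>0$, choose $q=e^{-p}\in\mathscr P_N(f)$ with $w_0(q)>G_N(f)-\varepsilon$, and produce for all large $j$ a function $q_j\in\mathscr P_N(f_j)$ with $q_j\stackrel{\text{hyp}}{\to}q$. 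Then $\liminf_j G_N(f_j)\ge\liminf_j w_0(q_j)\ge w_0(q)>G_N(f)-\varepsilon$ (the middle inequality being Lemma~\ref{MW-lower-semi-lem}), and letting $\varepsilon\downarrow 0$ finishes the proof.

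For the transfer, I would write $p=p_{\mathbf X_N^\psi}$ with $\mathbf X_N^\psi=\{(x_i,y_i)\}_{i=1}^N\subset\epi(\psi)$, and use condition (ii) in the definition of epi-convergence to pick, for each $i$, a sequence $x_{i,j}\to x_i$ with $\psi_j(x_{i,j})\to\psi(x_i)$; setting $y_{i,j}:=\max\{y_i,\psi_j(x_{i,j})\}$ then gives $(x_{i,j},y_{i,j})\in\epi(\psi_j)$ and $(x_{i,j},y_{i,j})\to(x_i,y_i)$, since $\psi(x_i)\le y_i$. (Alternatively one may invoke the Painlev\'e--Kuratowski convergence $\epi(\psi_j)\to\epi(\psi)$ that is equivalent to epi-convergence, cf.\ \cite{Rockafellar-Wets}.) For all $j$ large enough that the finitely many values $\psi_j(x_{i,j})$ are finite, I would then set $p_j:=p_{\mathbf X_{N,j}^{\psi_j}}$ with $\mathbf X_{N,j}^{\psi_j}:=\{(x_{i,j},y_{i,j})\}_{i=1}^N$ and $q_j:=e^{-p_j}\in\mathscr P_N(f_j)\subseteq\LC_{\rm c}(\R^n)$, so that $G_N(f_j)\ge w_0(q_j)$.

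The step I expect to be the main obstacle is verifying that $p_j\stackrel{\text{epi}}{\to}p$ (equivalently $q_j\stackrel{\text{hyp}}{\to}q$), i.e.\ that the inner linearization depends appropriately continuously on its generating point set. The epi-upper bound (condition (ii)) is routine: for $x=\sum_i\lambda_i x_i$ realizing the value $p(x)$, the points $x_j:=\sum_i\lambda_i x_{i,j}\to x$ satisfy $p_j(x_j)\le\sum_i\lambda_i y_{i,j}\to p(x)$, while for $x\notin\conv\{x_1,\dots,x_N\}$ one checks that $x\notin\conv\{x_{1,j},\dots,x_{N,j}\}$ for all large $j$ (these convex hulls converge to $\conv\{x_1,\dots,x_N\}$ in the Hausdorff metric), so $p_j(x)=+\infty=p(x)$. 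The epi-lower bound (condition (i)), that $p(x)\le\liminf_j p_j(x_j)$ whenever $x_j\to x$, is the delicate part: one passes to a subsequence on which $p_j(x_j)$ tends to the $\liminf$ (assumed finite, else there is nothing to prove) and writes $x_j=\sum_i\lambda_i^{(j)}x_{i,j}$, $p_j(x_j)=\sum_i\lambda_i^{(j)}y_{i,j}$ with $\lambda^{(j)}$ in the standard simplex; compactness of the simplex yields a further subsequence with $\lambda^{(j)}\to\lambda$, whence $x=\sum_i\lambda_i x_i$ and $\lim_j p_j(x_j)=\sum_i\lambda_i y_i\ge p(x)$. Once $q_j\stackrel{\text{hyp}}{\to}q$ is established, Lemma~\ref{MW-lower-semi-lem} applied to $q_j,q\in\LC_{\rm c}(\R^n)$ supplies the inequality $w_0(q)\le\liminf_j w_0(q_j)$ required above, completing the argument. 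Apart from these two simplex-compactness arguments, the only subtlety is ensuring the transferred points lie in $\epi(\psi_j)$ while still converging to the originals, which is handled by the choice $y_{i,j}=\max\{y_i,\psi_j(x_{i,j})\}$.
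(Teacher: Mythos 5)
Your argument is correct, but it is a genuinely different route from the paper's. The paper's proof is a two-line appeal to an abstract principle: it cites Lemma~\ref{MW-lower-semi-lem} for the lower semicontinuity of $w_0$ and then invokes the fact that a supremum of lower semicontinuous functions is lower semicontinuous. You instead argue sequentially, fixing $\varepsilon>0$ and a near-optimal $q\in\mathscr P_N(f)$, transferring its break points to $\epi(\psi_j)$ via the recovery sequences guaranteed by epi-convergence (with the key adjustment $y_{i,j}=\max\{y_i,\psi_j(x_{i,j})\}$), verifying $p_j\stackrel{\epi}{\to}p$ by the simplex-compactness argument, and then applying Lemma~\ref{MW-lower-semi-lem} along the transferred sequence.

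The comparison is actually interesting because the paper's one-liner, read literally, glosses over a genuine subtlety that your argument addresses head-on. The principle ``$\sup$ of l.s.c.\ is l.s.c.'' applies cleanly when the index set is fixed, but here $G_N(f)=\sup_{q\in\mathscr P_N(f)}w_0(q)$ is a supremum over a set of competitors that \emph{depends on} $f$. In general, hypo-convergence $f_j\stackrel{\hyp}{\to}f$ does not preserve the constraint $q\le f$ (e.g.\ $\psi_j=\psi+1/j$ destroys it whenever some $y_i=\psi(x_i)$), so one cannot simply freeze $q$ and evaluate $w_0$ along $f_j$. What is really needed is the lower (inner) semicontinuity of the set-valued map $f\mapsto\mathscr P_N(f)$ --- that every $q\in\mathscr P_N(f)$ can be approximated by $q_j\in\mathscr P_N(f_j)$ with $q_j\stackrel{\hyp}{\to}q$ --- and your transfer construction is precisely a proof of that. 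So your approach is longer but makes explicit the step the paper compresses; the paper's is shorter but implicitly relies on exactly the correspondence continuity you verify.
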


\begin{proof}
Let $\{f_k=e^{-\psi_k}\}_{k\in\mathbb N}\subset \LC_{\rm c}(\mathbb R^n)$ be such that $f_k \stackrel{\hyp}{\longrightarrow}f$ as  $k\to\infty$, where $f=e^{-\psi}\in \LC_{\rm c}(\mathbb R^n)$. Since hypo-convergence of log-concave functions is equivalent to epi-convergence of their convex base functions, we have $\psi=\epilim_{k\in\mathbb{N}}\psi_k$. We want to prove that
\begin{equation}\label{lower-semi-cont-GN}
\liminf_{k\to\infty} G_N(f_k)\geq G_N(f).
\end{equation}

Fix $\varepsilon>0$. By the definition of $G_N(f)$, there exists $q=e^{-p}\in\mathscr{P}_N(f)$
such that
\[
w_0(q)>G_N(f)-\varepsilon.
\]
Since $q\in\mathscr{P}_N(f)$, the function $p$ is an inner linearization of $\psi$ with at most $N$ break points. Hence there exist an integer $m\leq N$ and points
\[
\mathbf{X}_m:=\{(x_1,y_1),\dots,(x_m,y_m)\}\subset \epi(\psi)
\]
such that $\epi(p)=\conv^{\uparrow}\mathbf{X}_m$. For each $i\in\{1,\dots,m\}$, since $\psi=\epilim_{k\in\mathbb{N}}\psi_k$, there exists a subsequence
$\{x_{i,k}\}_{k\in\mathbb N}\subset\mathbb R^n$ such that $x_{i,k}\to x_i$ and $\psi_k(x_{i,k})\to \psi(x_i)$. Set $c_i:=y_i-\psi(x_i)\geq 0$, and define $y_{i,k}:=\psi_k(x_{i,k})+c_i$. Then $(x_{i,k},y_{i,k})\in \epi(\psi_k)$ for every $k$, and
\[
(x_{i,k},y_{i,k})\longrightarrow (x_i,y_i)
\quad\text{as }k\to\infty.
\]

Now let
\[
\widetilde{\mathbf{X}}_k:=\{(x_{1,k},y_{1,k}),\dots,(x_{m,k},y_{m,k})\}\subset \epi(\psi_k),\quad k\in\mathbb{N},
\]
and let $p_k$ be the inner linearization of $\psi_k$ determined by $\widetilde{\mathbf{X}}_k$, i.e.,
\[
\epi(p_k)=\conv^{\uparrow} \widetilde{\mathbf{X}}_k,\quad k\in\mathbb{N}.
\]
Then $p_k\in\mathscr{C}_N(\psi_k)$, so $q_k:=e^{-p_k}\in\mathscr{P}_N(f_k)$.

We claim that 
\begin{equation}\label{eq:epi-lim-p-tbs}
p=\epilim_{k\in\mathbb{N}}p_k.
\end{equation}
To prove this, it is enough to show that the epigraphs $\epi(p_k)$ converge to $\epi(p)$ in the Painlev\'e--Kuratowski sense (see, e.g., \cite[Definition 7.1]{Rockafellar-Wets}). First, let $(x,t)\in \epi(p)=\conv^{\uparrow}\mathbf{X}_m$. Then there exist $\lambda_1,\dots,\lambda_m\geq 0$
with $\sum_{i=1}^m\lambda_i=1$ and some $s\geq 0$ such that
\[
x=\sum_{i=1}^m \lambda_i x_i,
\qquad
t=\sum_{i=1}^m \lambda_i y_i+s.
\]
For $k\in\mathbb{N}$, define
\[
x_k:=\sum_{i=1}^m \lambda_i x_{i,k},
\qquad
t_k:=\sum_{i=1}^m \lambda_i y_{i,k}+s.
\]
Then $(x_k,t_k)\in \epi(p_k)=\conv^{\uparrow} \widetilde{\mathbf{X}}_k$ for every $k$, and $(x_k,t_k)\to (x,t)$ as $k\to\infty$. 
Thus every point of $\epi(p)$ is a limit of points from $\epi(p_k)$.

Conversely, suppose that $(x_k,t_k)\in \epi(p_k)$ for every $k\in\mathbb{N}$, and that $(x_k,t_k)\to (x,t)$ as $k\to\infty$. 
Since $(x_k,t_k)\in \conv^{\uparrow}\widetilde{\mathbf{X}}_k$, for each $k$ there exist
$\lambda_{1,k},\dots,\lambda_{m,k}\geq 0$ with $\sum_{i=1}^m\lambda_{i,k}=1$ and some $s_k\geq 0$
such that
\[
x_k=\sum_{i=1}^m \lambda_{i,k}x_{i,k},
\qquad
t_k=\sum_{i=1}^m \lambda_{i,k}y_{i,k}+s_k.
\]
Passing to a subsequence, if necessary, by compactness of the simplex we may assume that $\lambda_{i,k}\to \lambda_i \geq 0$ for $i\in\{1,\ldots,m\}$ and $\sum_{i=1}^m\lambda_i=1$. Using $(x_{i,k},y_{i,k})\to (x_i,y_i)$ as $k\to\infty$, we obtain
\[
x=\sum_{i=1}^m \lambda_i x_i,
\qquad
t\geq \sum_{i=1}^m \lambda_i y_i.
\]
Hence $(x,t)\in \conv^{\uparrow}\mathbf{X}_m=\epi(p)$. Therefore, $\epi(p_k)\to \epi(p)$ as $k\to\infty$, which proves \eqref{eq:epi-lim-p-tbs}.

Since $q_k=e^{-p_k}$ and $q=e^{-p}$, this is equivalent to $q_k \stackrel{\hyp}{\longrightarrow}q$ as $k\to\infty$.
By Lemma~\ref{lem:walpha-lsc}, the mean width $w_0$ is lower semicontinuous with respect to hypo-convergence.
Therefore,
\[
\liminf_{k\to\infty} w_0(q_k)\geq w_0(q).
\]
Since $q_k\in\mathscr{P}_N(f_k)$, we also have
\[
G_N(f_k)\geq w_0(q_k)
\quad\text{for every }k\in\mathbb{N}.
\]
It follows that
\[
\liminf_{k\to\infty} G_N(f_k)
\geq
\liminf_{k\to\infty} w_0(q_k)
\geq
w_0(q)
>
G_N(f)-\varepsilon.
\]
Since $\varepsilon>0$ was arbitrary, we conclude that
\[
\liminf_{k\to\infty} G_N(f_k)\geq G_N(f).
\]
Thus $G_N$ is lower semicontinuous on $\LC_{\rm c}(\mathbb R^n)$ with respect to hypo-convergence.
\end{proof}


We are now ready to prove Theorem \ref{mainThm-new}.

\subsection{Proof of Theorem \ref{mainThm-new}}

We give the proof for $\alpha=0$; the case $\alpha\in(-2/(n-1),0)$ follows by the modifications described above. Let $f\in\LC_{\rm c}(\R^n)$. By Theorem \ref{convergence-hypo-symmetral}, there exists a sequence $\{u_i\}\subset\Sp$ such that $\bigcirc_{i=1}^m\tau_{u_i}  f\stackrel{\hyp}{\longrightarrow}f_{\rm sym}^{(0)}$  as $m\to\infty$. Applying Lemma \ref{mainLemma} recursively $m$ times, we obtain
\[
G_{N}(f)\geq G_{N}(\bigcirc_{i=1}^m\tau_{u_i}  f),\qquad m=1,2,\ldots.
\]
Taking lower limits as $m\to\infty$ and applying Lemma \ref{cont-lemma}, we obtain
\[
G_{N}(f) \geq \liminf_{m\to\infty}G_{N}(\bigcirc_{i=1}^m\tau_{u_i}  f)
\geq G_{N}(f_{\rm sym}^{(0)}).
\]
Since $f\in\LC_{\rm c}(\R^n)$ was arbitrary, the result follows. \qed


\subsection{A general inequality}\label{extremal-section}

As the proof of Theorem \ref{mainThm-new}  reveals, a more general extremal property of the reflectional hypo-symmetrization holds true.

\begin{theorem}\label{new-thm}
Let $\alpha\in(-\infty,0]$. Suppose that $F: \mathcal{C}_\alpha(\R^n) \to[0,\infty)$  
is upper semicontinuous with respect to hypo-convergence,  invariant under orthogonal transformations, and is increasing with respect to Minkowski symmetrizations, that is,
\begin{equation}\label{BFL-fn}
\forall f\in\mathcal{C}_\alpha(\R^n),\quad F(f)\leq F(\tau_u^\alpha f).
\end{equation}
Then for every $f\in\mathcal{C}_\alpha(\R^n)$, we have 
\begin{equation}\label{new1}
F(f) \leq F(f_{\rm sym}^{(\alpha)}).
\end{equation}
\end{theorem}

\begin{proof}
Let $f\in\mathcal{C}_\alpha(\R^n)$.  By hypothesis,  for any Minkowski symmetrization we have $F(\tau_u^\alpha f) \geq F(f)$.  By Theorem \ref{convergence-hypo-symmetral}, there exists a sequence of Minkowski symmetrizations such that  $\bigcirc_{i=1}^m\tau_{u_i}^\alpha  f\stackrel{\hyp}{\longrightarrow} f_{\rm sym}^{(\alpha)}$  as $m\to\infty$. Therefore, by the previous inequality and the upper semicontinuity of $F$, we obtain
\[
F(f) \leq \limsup_{m\to\infty}F\left(\bigcirc_{i=1}^m \tau_{u_i}^\alpha  f\right)\leq  F(f_{\rm sym}^{(\alpha)}).
\]
\end{proof}

\begin{remark}\label{log-concave-remark}
    The analogous statement of Theorem \ref{new-thm} holds true for lower semicontinuous functions $F$ which are decreasing with respect to Minkowski symmetrizations (meaning the inequality in \eqref{BFL-fn} reverses); in this case, the inequality \eqref{new1} reverses.
\end{remark}

\begin{remark}
    One may use the results in Subsection \ref{extremal-hypo-sec} to show that Theorem \ref{mainThm-new} is a special case of Theorem \ref{new-thm}.
\end{remark}

\subsection{A functional Urysohn-type inequality}\label{sec:urysohn}

As a corollary of Theorem \ref{new-thm}, we obtain a functional  Urysohn-type inequality. 

\begin{corollary}\label{urysohn-cor}
    For every $f\in\LC_{\rm c}(\R^n)$, we have $J(f)\leq J(f_{\rm sym}^{(0)})$. If $J(f)>0$, then equality holds if and only if $f(x)=g(x-a)$ for some radial function $g\in\LC_{\rm c}(\R^n)$ and some $a\in\R^n$.
\end{corollary}

Let us first establish the inequality. After that, we will prove the equality conditions.
\begin{proof}[Proof of the inequality]
    The function $f\mapsto J(f)$ is continuous with respect to the topology of hypo-convergence on $\LC_{\rm c}(\R^n)$ (see \cite[Lemma 16]{CLM-IMRN}). The total mass $J(\cdot)$ is  invariant under orthogonal transformations, and by Proposition \ref{mainProp}(vii), we have $J(\tau_u^0 f)\geq J(f)$.
    Thus,  the desired inequality  follows from Theorem \ref{new-thm} with $\alpha=0$ and $F=J$.
\end{proof}

We will establish the equality conditions through a few lemmas. The next lemma implies the sufficiency of the condition $f(x)=g(x-a)$.

\begin{lemma}\label{lem:sufficiency-lemma}
    Let $f\in\LC_{\rm c}(\R^n)$ and suppose that $f(x)=g(x-a)$ for some radial function $g\in\LC_{\rm c}(\R^n)$ and $a\in\R^n$. Then $g=f_{\rm sym}^{(0)}$. In particular, $J(f)=J(f_{\rm sym}^{(0)})$.
\end{lemma}

\begin{proof}
    Since $g\in\LC_{\rm c}(\R^n)$, there exists $\varphi\in\mathrm{Conv}_{\rm c}(\R^n)$ such that $g=e^{-\varphi}$. Since $g$ is radial, $\varphi$ is also radial, i.e., $\varphi(\rho x)=\varphi(x)$ for all $\rho\in\mathrm{O}(n)$ and all $x\in\R^n$. Set $\psi(x):=\varphi(x-a)$, so that $f(x)=e^{-\psi(x)}$. Recall  $f_{\rm sym}^{(0)}=e^{-\psi_{\rm sym}}$, where, by Theorem \ref{thm:full-sequence-minkowski}, $\psi_{\rm sym}$ is the unique radial function satisfying
    \[
\mathcal{L}\psi_{\rm sym}(x)=\int_{\mathrm{O}(n)}\mathcal{L}\psi(\vartheta^{-1}x)\,d\eta(\vartheta),\quad x\in\R^n.
    \]
    Thus it is enough to show that $\mathcal{L}\varphi=\mathcal{L}\psi_{\rm sym}$. For every $x\in\R^n$, we have
    \[
\mathcal{L}\psi(x)=\sup_{y\in\R^n}(\langle x,y\rangle-\psi(y))=\sup_{y\in\R^n}(\langle x,y\rangle-\varphi(y-a)).
    \]
    Set $z=y-a$, so that $y=z+a$. Then
    \[
\mathcal{L}\psi(x)=\sup_{z\in\R^n}(\langle x,z+a\rangle-\varphi(z))=\langle x,a\rangle+\sup_{z\in\R^n}(\langle x,z\rangle-\varphi(z))=\langle x,a\rangle+\mathcal{L}\varphi(x).
    \]

    Fix $\rho\in\mathrm{O}(n)$. Then
    \[
\mathcal{L}\varphi(\rho x)=\sup_{y\in\R^n}(\langle\rho x,y\rangle-\varphi(y))=\sup_{y\in\R^n}(\langle x,\rho^{-1}y\rangle-\varphi(y)).
    \]
    Set $z=\rho^{-1}y$, so that $y=\rho z$. Since $\varphi$ is radial, $\varphi(y)=\varphi(\rho z)=\varphi(z)$. Hence
    \[
\mathcal{L}\varphi(\rho x)=\sup_{z\in\R^n}(\langle x,z\rangle-\varphi(z))=\mathcal{L}\varphi(x).
    \]
    Thus, $\mathcal{L}\varphi$ is radial. Therefore,
    \begin{align*}
        \mathcal{L}\psi_{\rm sym}(x)&=\int_{\mathrm{O}(n)}\mathcal{L}\psi(\vartheta^{-1}x)\,d\eta(\vartheta)=\int_{\mathrm{O}(n)}\left(\langle\vartheta^{-1}x,a\rangle+\mathcal{L}\varphi(\vartheta^{-1}x)\right)d\eta(\vartheta)\\
        &=\int_{\mathrm{O}(n)}\langle\vartheta^{-1}x,a\rangle\,d\eta(\vartheta)+\mathcal{L}\varphi(x).
    \end{align*}

    It remains to show that 
    \begin{equation}\label{eq:equals-zero-tbs}
    \int_{\mathrm{O}(n)}\langle\vartheta^{-1}x,a\rangle\,d\eta(\vartheta)=0.
    \end{equation}
    Define $m:=\int_{\mathrm{O}(n)}\vartheta a\,d\eta(\vartheta)\in\R^n$. Then
    \[
\int_{\mathrm{O}(n)}\langle\vartheta^{-1}x,a\rangle\,d\eta(\vartheta)=\int_{\mathrm{O}(n)}\langle x,\vartheta a\rangle\,d\eta(\vartheta)=\langle x,m\rangle.
    \]
    Moreover, for every $\rho\in\mathrm{O}(n)$, the vector $m$ is invariant under $\rho$:
    \[
\rho m=\int_{\mathrm{O}(n)}\rho(\vartheta a)\,d\eta(\vartheta)=\int_{\mathrm{O}(n)}(\rho\vartheta)a\,d\eta(\vartheta)=\int_{\mathrm{O}(n)}\sigma a\,d\eta(\sigma)=m.
    \]
    The only vector fixed by every orthogonal transformation is $o$, so $m=o$. This proves \eqref{eq:equals-zero-tbs}. Consequently, $\mathcal{L}\psi_{\rm sym}(x)=\mathcal{L}\varphi(x)$ for all $x\in\R^n$, i.e., $\mathcal{L}\psi_{\rm sym}=\mathcal{L}\varphi$. Since both $\psi_{\rm sym}$ and $\varphi$ are proper and lower semicontinuous, applying the Legendre--Fenchel transform again we get $\varphi=\psi_{\rm sym}$. Therefore, $g=e^{-\varphi}=e^{-\psi_{\rm sym}}=f_{\rm sym}^{(0)}$. Finally, by the translation-invariance of the Lebesgue measure,
    \[
J(f)=\int_{\R^n}g(x-a)\,dx=\int_{\R^n}g(x)\,dx=\int_{\R^n}f_{\rm sym}^{(0)}(x)\,dx=J(f_{\rm sym}^{(0)}).
    \]
\end{proof}

\subsubsection{Technical lemmas}

\begin{lemma}\label{period-lemma}
Let $f:\R^n\to[0,\infty)$ be measurable and suppose $0<J(f)<\infty$. If there exists $p\in\R^n$ such that $f(x+p)=f(x)$ for all $x\in\R^n$, then $p=o$.
\end{lemma}

\begin{proof}
Suppose by way of contradiction that $p\neq o$. Since $J(f)>0$, there exists $\varepsilon>0$ such that the superlevel set $E:=\lev_{\geq \varepsilon}f$ has positive Lebesgue measure. Indeed, if $\vol_n(E)=0$ for every $\varepsilon>0$, then
$f=0$ almost everywhere, contradicting $J(f)>0$. Choose $R>0$ such that $\vol_n(E\cap B(o,R))>0$, and set $A:=E\cap B(o,R)$. Then $A$ is measurable, bounded, and $0<\vol_n(A)<\infty$.  Moreover, $f(x)\geq \varepsilon$ for every $x\in A$. 
Since $f$ is $p$-periodic, for every $k\in\mathbb{Z}$ we have $f(x+kp)=f(x)$ for all $x\in\R^n$. Hence $f(x)\geq \varepsilon$ for every $x\in A+kp$.

Since $A$ is bounded and $p\neq o$, we can choose an integer $m\geq 1$ so large that the sets $A, A+mp, A+2mp,\ldots$ are pairwise disjoint. Indeed, since $A\subset B(o,R)$, each translate $A+jmp$ is contained in $B(jmp,R)$, and the balls $B(jmp,R)$ are pairwise disjoint if $m|p|>2R$. Therefore, for every $N\in\mathbb{N}$,
\[
\int_{\R^n} f(x)\,dx
\geq
\sum_{j=0}^N \int_{A+jmp} f(x)\,dx
\geq 
\sum_{j=0}^N \varepsilon\vol_n(A+jmp)
=(N+1)\varepsilon\vol_n(A).
\]
Letting $N\to\infty$, we obtain $J(f)=\infty$, contradicting the assumption that $J(f)<\infty$. Thus $p=o$.
\end{proof}

The final lemma asserts that for a metrizable topological group, invariance under a dense subgroup implies invariance under its closure. 

\begin{lemma}\label{dense-lemma}
    Let $G$ be a metrizable topological group acting continuously on a topological space $X$, and let $H$ be a dense subgroup of $G$. Let $f:X\to\R$ be upper semicontinuous. If $f(hx)=f(x)$ for all $h\in H$ and all $x\in X$, then $f(gx)=f(x)$ for all $g\in G$ and all $x\in X$. 
\end{lemma}

\begin{proof}
    Fix $g\in G$ and $x\in X$. Since $H$ is dense in $G$, we can choose a sequence $\{h_i\}\subset H$ such that $h_i\to g$. By continuity of the action, we have $h_i x\to gx$ as $i\to\infty$. Since $f$ is upper semicontinuous,
    \[
f(gx) \geq \limsup_{i\to\infty}f(h_i x).
    \]
    But $f$ is $H$-invariant, so $f(h_i x)=f(x)$ for every $i$. Hence $f(gx)\geq f(x)$. 
    
    Now apply the same argument to $g^{-1}$. Choose a sequence $\{k_i\}\subset H$ with $k_i\to g^{-1}$ as $i\to\infty$. Then $k_i(gx)\to g^{-1}(gx)=x$ as $i\to\infty$, so by the upper semicontinuity of $f$ we obtain
    \[
f(x)\geq\limsup_{i\to\infty}f(k_i(gx)).
    \]
    Again, since $f$ is $H$-invariant we have $f(k_i(gx))=f(gx)$ for every $i$. Thus $f(x)\geq f(gx)$. Combining the previous two inequalities, we get $f(gx)=f(x)$. This proves the lemma.
\end{proof}

It remains to prove the necessity of $f(x)=g(x-a)$. In this direction, we first characterize the equality conditions of a single symmetrization.

\begin{lemma}\label{lem:one-step-equality}
    Let $f\in\LC_{\rm c}(\R^n)$ with $J(f)>0$, and let $u\in\Sp$. If $R_u f$ is a translate of $f$, then there exist $t\in\R$ and $g\in\LC_{\rm c}(\R^n)$ with $g=g\circ R_u$ such that
\[
f(x)=g(x-tu),\qquad x\in\R^n.
\]
Conversely, if such a representation holds, then $R_u f$ is a translate of $f$ by a vector parallel to $u$.
\end{lemma}

\begin{proof}
   For $z\in\R^n$, denote the translation of $f$ by $z$ by $T_z f(x):=f(x-z)$. Assume first that $R_u f=T_z f$ for some $z\in\R^n$. Applying $R_u$ again, we get
   \[
   R_u(T_z f)=T_{R_u(z)}R_u f.
   \]
   Indeed, both sides evaluated at $x$ are equal to $f(R_u x-z)$. Hence
   \begin{align*}
       f &=R_u(R_u f)=R_u(T_z f)= T_{R_u(z)}R_u f=T_{R_u(z)}T_z f=T_{z+R_u(z)}f.
   \end{align*}
   Thus $f$ has period $p:=z+R_u(z)$. Since a nonzero integrable function cannot have a nonzero period by Lemma \ref{period-lemma}, we have $p=z+R_u(z)=o$. Therefore $z$ is parallel to $u$, so $z=2tu$ for some $t\in\R$. Hence $R_u f=T_{2tu}f$. Define $g:=T_{tu}f$. Then
   \begin{align*}
R_u f(x+tu)&=f(R_u(x+tu))=f(R_u x-tu),\\
T_{2tu}f(x+tu)&=f(x-tu).
   \end{align*}
   Using $R_u f=T_{2tu}f$ evaluated at $x+tu$, we obtain
   \[
g(R_u x)=f(R_u x-tu)=f(x-tu)=g(x),
   \]
   so $g$ is symmetric with respect to $u^\perp$, and $f=T_{-tu}g$. Renaming $-t$ as $t$ gives $f(x)=g(x-tu)$.

   Conversely, suppose that $f(x)=g(x-tu)$ for some $t\in\R$ and some $g\in\LC_{\rm c}(\R^n)$ satisfying $g=g\circ R_u$. Then, for every $x\in\R^n$,
   \[
R_u f(x)=f(R_u x)=g(R_u x-tu)=g(R_u(x+tu))=g(x+tu)=f(x+2tu).
   \]
   Thus $R_u f=T_{-2tu}f$, which is a translate of $f$ by a vector parallel to $u$.
\end{proof}

To complete the proof, we use the following lemma.
\begin{lemma}\label{lem:necessity-2}
    Let $f\in\LC_{\rm c}(\R^n)$ with $J(f)>0$. If $J(f)=J(f_{\rm sym}^{(0)})$, then $f$ is a translation of a radial function $g\in\LC_{\rm c}(\R^n)$.
\end{lemma}

\begin{proof}
    By Theorem \ref{convergence-hypo-symmetral}, there exists a sequence of Minkowski symmetrizations such that  $\bigcirc_{i=1}^m\tau_{u_i}^0  f\stackrel{\hyp}{\longrightarrow} f_{\rm sym}^{(0)}$  as $m\to\infty$. By the monotonicity of the total mass under the Minkowski symmetrization (see Proposition \ref{mainProp}(vii)), this implies
    \[
J(f) \leq J(\tau_{u_1}^0 f)\leq J(\tau_{u_2}^0\tau_{u_1}^0 f)\leq\ldots\leq J(f_{\rm sym}^{(0)}).
    \]
    Since $J(f)=J(f_{\rm sym}^{(0)})$, this implies that 
    \[
J(f)=J(\tau_{u_1}^0 f)=J(\tau_{u_2}^0\tau_{u_1}^0 f)=\ldots=J(f_{\rm sym}^{(0)}).
    \]
    Thus, for every direction $u_i$ in the convergent sequence, equality holds for the corresponding one-step symmetrization of the current iterate. By Proposition \ref{mainProp}(vii), the reflection of that current iterate across $u_i^\perp$ is a translate of the current iterate. Lemma \ref{lem:one-step-equality} then shows that the current iterate is a translate of a function symmetric with respect to $u_i^\perp$. By Theorem \ref{thm:full-sequence-minkowski}, the chosen reflections generate a dense subgroup of $\mathrm{O}(n)$. By Lemma \ref{dense-lemma}, this implies that $f$ is a translation of a radial log-concave function. This completes the proof.
\end{proof}


\subsubsection{Recovering the classical Urysohn inequality}

To recover the classical Urysohn inequality, we  will need one more lemma. Let $K_w^*:=\left(\frac{w(K)}{w(B_n)}\right)B_n$ denote the $n$-dimensional Euclidean ball centered at the origin that has the same mean width as $K$.

\begin{lemma}\label{indicators-lemma}
For every $K\in\mathcal{K}^n$, we have $(\mathbbm{1}_K)_{\rm sym}=\mathbbm{1}_{K_w^*}$, or equivalently, $(I_K^\infty)_{\rm sym}=I_{K_w^*}^\infty$.
\end{lemma}

\begin{proof}
By the classical convergence theorem for Minkowski symmetrizations of convex bodies, there exists a sequence of hyperplanes $H_i\in\Gr(n,n-1)$ such that 
\[
K_m:=\bigcirc_{i=1}^m \tau_{H_i}K\longrightarrow K_w^*
\]
in the Hausdorff metric as $m\to\infty$. By Remark \ref{rmk:indicator-symm}, for every $i$ we have $\tau_{H_i}\mathbbm{1}_K=\mathbbm{1}_{\tau_{H_i}K}$. Hence, inductively, for each $m\in\mathbb{N}$ we obtain $\bigcirc_{i=1}^m\tau_{H_i}\mathbbm{1}_K=\mathbbm{1}_{K_m}$. Therefore,
\[
\hyp\left(\bigcirc_{i=1}^m\tau_{H_i}\mathbbm{1}_K\right)=K_m\times[0,1].
\]
Since $K_m\to K_w^*$ in the Hausdorff metric as $m\to\infty$, we also have $K_m\times[0,1]\to K_w^*\times[0,1]$ in the Hausdorff metric as $m\to\infty$. Equivalently, the sequence $\mathbbm{1}_{K_m}$ hypo-converges to $\mathbbm{1}_{K_w^*}$ as $m\to\infty$. Thus, by Theorem \ref{thm:full-sequence-minkowski} we have $(\mathbbm{1}_K)_{\rm sym}^{(0)}=\mathbbm{1}_{K_w^*}$. This is equivalent to $(I_K^\infty)_{\rm sym}=I_{K_w^*}^\infty$ since
\[
e^{-I_{K_w^*}^\infty}=\mathbbm{1}_{K_w^*}=(\mathbbm{1}_K)_{\rm sym}^{(0)}=e^{-(I_K^\infty)_{\rm sym}}.
\]
\end{proof}

\begin{remark}
 Finally, let us explain how to recover the classical Urysohn inequality from the preceding  results. Choose $f=\mathbbm{1}_K$, where $K$ is a convex body in $\R^n$. Applying Corollary \ref{urysohn-cor} and Lemma \ref{indicators-lemma}, we get
\[
\vol_n(K) = J(\mathbbm{1}_K) \leq J((\mathbbm{1}_K)_{\rm sym}^{(0)})=J(\mathbbm{1}_{K_w^*}) = \vol_n(K_w^*)=\left(\frac{w(K)}{w(B_n)}\right)^n\vol_n(B_n).
\]
Equality holds if and only if $\mathbbm{1}_K(x)=g(x-a)$ for all $x\in\R^n$, where $g\in\LC_{\rm c}(\R^n)$ is radial and $a\in\R^n$. Thus $g(x)=\mathbbm{1}_{K-a}(x)$ is radial, i.e., $\mathbbm{1}_{K-a}(\rho x)=\mathbbm{1}_{K-a}(x)$ for all $\rho\in\mathrm{O}(n)$ and all $x\in\R^n$. This means that $\mathbbm{1}_{\rho^{-1}(K-a)}(x)=\mathbbm{1}_{K-a}(x)$ for all $\rho\in\mathrm{O}(n)$ and all $x\in\R^n$; this is equivalent to $\rho^{-1}(K-a)=K-a$ for all $\rho\in\mathrm{O}(n)$, i.e., $K=RB_n+a$ for some $R>0$.
\end{remark}


\section*{Acknowledgments}

I would like to thank the anonymous referees for their careful reading and remarks which helped improve this paper. I would also like to thank Andrea Colesanti, Fabian Mussnig, and Liran Rotem for the discussions on the topic of this paper.

\bibliographystyle{plain}
\bibliography{main}


\vspace{3mm}

\noindent {\sc Department of Mathematics \& Computer Science, Longwood University \\201 High St., Farmville, Virginia 23901}

\vspace{1mm}

\noindent {\it E-mail address:} {\tt hoehnersd@longwood.edu}

\end{document}